\pgfplotsset{compat=1.13}
\providecommand{\keywords}[1]{\textbf{\textit{Keywords---}} #1}
\providecommand{\MSC}[1]{\textbf{\textit{MSC---}} #1}
\newcommand{\Addresses}{
\bigskip
\footnotesize

D. Gil-Muñoz, \textsc{Charles University, Faculty of Mathematics and Physics, Department of Algebra, Sokolovska 83, 18600 Praha 8, Czech Republic}\par\nopagebreak
\textit{E-mail address}: \texttt{daniel.gil-munoz@mff.cuni.cz}

}
\newtheorem{defi}{Definition}[section]
\newtheorem{example}[defi]{Example}
\newtheorem{teo}[defi]{Theorem}
\newtheorem{coro}[defi]{Corollary}
\newtheorem{lema}[defi]{Lemma}
\newtheorem{nota}[defi]{Note}
\newtheorem{pro}[defi]{Proposition}
\newtheorem{rmk}[defi]{Remark}
\title{The ring of integers of Hopf-Galois  degree $p$ extensions \\ of $p$-adic fields with dihedral normal closure}
\author{Daniel Gil-Muñoz}
\date{}
\begin{document}
\maketitle

\vspace{-0.5cm}

\begin{abstract}
For an odd prime number $p$, we consider degree $p$ extensions $L/K$ of $p$-adic fields with normal closure $\widetilde{L}$ such that the Galois group of $\widetilde{L}/K$ is the dihedral group of order $2p$. We shall prove a complete characterization of the freeness of the ring of integers $\mathcal{O}_L$ over its associated order $\mathfrak{A}_{L/K}$ in the unique Hopf-Galois structure on $L/K$, which is analogous to the one already known for cyclic degree $p$ extensions of $p$-adic fields. We shall derive positive and negative results on criteria for the freeness of $\mathcal{O}_L$ as $\mathfrak{A}_{L/K}$-module.
\end{abstract}

\MSC{11S15, 12F10, 16T05.}

\keywords{Ring of integers, associated order, ramification jump}

\section{Introduction}

The normal basis theorem states that every Galois extension $L/K$ admits a normal basis, that is, a basis consisting of the Galois conjugates of some element of $L$. In general, normal bases are convenient to work with for many reasons. For instance, the vector of coordinates with respect to a normal basis of a Galois conjugate $\sigma(\alpha)$ of an element $\alpha\in L$ is a permutation of the coordinates of $\alpha$. 

A more subtle problem is to determine the existence of a normal basis for a Galois extension $L/K$ of $p$-adic fields which in addition is a basis of the ring of integers $\mathcal{O}_L$ of $L$ as module over the ground ring $\mathcal{O}_K$. This amounts to asking whether $\mathcal{O}_L$ is free as $\mathcal{O}_K[G]$-module, where $G=\mathrm{Gal}(L/K)$. Noether \cite{noether} proved that this happens if and only if $L/K$ is tamely ramified. In order to study wildly ramified extensions, $\mathcal{O}_K[G]$ is replaced by the associated order $$\mathfrak{A}_{L/K}=\{\lambda\in K[G]\,|\,\lambda\cdot x\in\mathcal{O}_L\hbox{ for every }x\in\mathcal{O}_L\}$$ of $\mathcal{O}_L$ in $K[G]$, which is by definition the maximal $\mathcal{O}_K$-order in $K[G]$ acting $\mathcal{O}_K$-linearly on $\mathcal{O}_L$. This approach is due to Leopoldt, see \cite{leopoldt}. The associated order is the right object to choose, as it is the unique $\mathcal{O}_K$-order in $K[G]$ over which $\mathcal{O}_L$ may be free.

However, little is known about the $\mathfrak{A}_{L/K}$-freeness of $\mathcal{O}_L$ in general, and research has shown that there is not a uniform behaviour. Hence, the determination of criteria for the freeness of $\mathcal{O}_L$ has been a recurring problem in the recent decades. Let $p$ be an odd prime number. A complete answer is known for cyclic degree $p$ extensions, consisting in a characterization in terms of the ramification of the extension (see \cite{bertrandiasferton,bertrandiasbertrandiasferton}). Namely:

\begin{teo}[F. Bertrandias, J.P. Bertrandias, M.J. Ferton]\label{cyclicpfreeness} Let $L/K$ be a totally ramified cyclic degree $p$ extension of $p$-adic fields and let $e$ be the ramification index of $K/\mathbb{Q}_p$. Let $G\coloneqq\mathrm{Gal}(L/K)=\langle\sigma\rangle$, let $t$ be its ramification jump and let $a$ be the remainder of $t$ mod $p$.
\begin{itemize}
    \item[(1)] If $a=0$, then $\mathfrak{A}_{L/K}$ is the maximal $\mathcal{O}_K$-order in $K[G]$ and $\mathcal{O}_L$ is $\mathfrak{A}_{L/K}$-free.
    \item[(2)] If $a\neq0$, $\mathfrak{A}_{L/K}$ is $\mathcal{O}_K$-free with generators $\pi_K^{-n_i}f^i$, where $f=\sigma-1$, $n_i=\mathrm{min}_{0\leq j\leq p-1-i}(\nu_{i+j}-\nu_j)$ and $\nu_i=\Big\lfloor\frac{a+it}{p}\Big\rfloor$ for every $0\leq i\leq p-1$.
    \item[(3)] If $a\mid p-1$, then $\mathcal{O}_L$ is $\mathfrak{A}_{L/K}$-free. Moreover, if $t<\frac{pe}{p-1}-1$, the converse holds.
    \item[(4)] If $t\geq\frac{pe}{p-1}-1$, then $\mathcal{O}_L$ is $\mathfrak{A}_{L/K}$-free if and only if the length of the expansion of $\frac{t}{p}$ as continued fraction is at most $4$.
\end{itemize}
\end{teo}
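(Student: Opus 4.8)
The plan is to recast the group algebra $K[G]$ as the commutative $K$-algebra that it is, and to work throughout in the $K$-basis $\{1,f,f^2,\dots,f^{p-1}\}$ with $f=\sigma-1$ for a fixed generator $\sigma$ of $G$. By the normal basis theorem $L\cong K[G]$ as $K[G]$-modules, so the entire problem is governed by a single valuation function: how much each operator $f^i$ moves the $L$-valuation of elements of $\mathcal{O}_L$. The arithmetic backbone I would establish first is the standard ramification estimate for a totally ramified cyclic degree $p$ extension with jump $t$, namely that $\sigma-1$ raises the $L$-valuation of an element by exactly $t$ whenever that valuation is prime to $p$, and by strictly more each time the running valuation crosses a multiple of $p$. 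Iterating this for $f^i$ and counting the crossings (each step shifts the residue mod $p$ by $a\equiv t\pmod p$) produces precisely the exponents $\nu_i=\lfloor(a+it)/p\rfloor$. Making these estimates sharp on each graded piece of $\mathcal{O}_L$, indexed by the residue of the valuation modulo $p$, is the computational heart of the setup, and it is exactly here that the remainder $a$ enters.

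Granting this, part (1) is the cleanest case. When $a=0$ the valuation jumps are aligned across all residue classes, so the maximal $\mathcal{O}_K$-order $\mathfrak{M}$ of $K[G]$ already maps $\mathcal{O}_L$ into itself, forcing $\mathfrak{A}_{L/K}=\mathfrak{M}$. Since $\mathrm{char}\,K=0$, the algebra $K[G]=K[\sigma]/(\sigma^p-1)$ is a finite product of fields, so $\mathfrak{M}$ is a product of discrete valuation rings; the lattice $\mathcal{O}_L$, which has rank one on each factor because $L\cong\prod K_i$ has rank one over each $K_i$, is therefore automatically free over $\mathfrak{M}$ of rank one. For part (2) I would compute $\mathfrak{A}_{L/K}$ head-on: an element $\sum_i c_i f^i$ with $c_i\in K$ lies in $\mathfrak{A}_{L/K}$ exactly when each $c_i f^i$ carries every graded piece of $\mathcal{O}_L$ back into $\mathcal{O}_L$, and imposing this over all the pieces gives $v_K(c_i)\ge n_i$ with $n_i=\min_{0\le j\le p-1-i}(\nu_{i+j}-\nu_j)$, the minimum recording the worst residue class. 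The resulting family $\{\pi_K^{-n_i}f^i\}$ is then the claimed $\mathcal{O}_K$-basis.

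Parts (3) and (4) reduce to a single monogenicity question: because $\mathcal{O}_L$ and $\mathfrak{A}_{L/K}$ share the $\mathcal{O}_K$-rank $p$, freeness is equivalent to $\mathcal{O}_L=\mathfrak{A}_{L/K}\cdot\alpha$ for one element $\alpha$, which I would test on the associated graded module built from the filtration by $L$-valuation. When $a\mid p-1$, the sequence $\nu_i=\lfloor(a+it)/p\rfloor$ falls into a regular arithmetic pattern that makes the generating exponents $n_i$ additive enough for the graded module to be cyclic, yielding $\mathfrak{A}_{L/K}$-freeness outright; in the range $t<\frac{pe}{p-1}-1$ I would prove the converse by exhibiting, whenever $a\nmid p-1$, a genuine obstruction in the graded module that prevents any single generator.

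The main obstacle is part (4). Once $t\ge\frac{pe}{p-1}-1$ the associated order becomes \emph{small} relative to $\mathcal{O}_L$, genericity fails, and the floor sequence $\nu_i=\lfloor(a+it)/p\rfloor$ behaves like a Sturmian (Beatty) sequence attached to $t/p$; monogenicity then turns into a delicate combinatorial condition on the differences $n_i$. The key insight, which I expect to be by far the hardest step, is that running the Euclidean algorithm on $t$ and $p$ — equivalently, reading off the continued fraction expansion of $t/p$ — exactly tracks the structure of these floor-differences, and that the graded module is cyclic precisely when that expansion has length at most $4$. Establishing this dictionary between the partial quotients of $t/p$ and the existence of a free generator is Ferton's deep contribution and constitutes the crux of the theorem.
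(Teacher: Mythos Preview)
The paper does not give its own proof of this statement. Theorem~\ref{cyclicpfreeness} is quoted as a classical result of F.~Bertrandias, J.-P.~Bertrandias and M.-J.~Ferton, with detailed proofs referenced to \cite[Chapitre II]{fertonthesis} and sketches to \cite{ferton1972,ferton1974,bertrandiasferton,bertrandiasbertrandiasferton}; the first three parts are also credited to the modern treatment of \cite{delcorsoferrilombardo}. There is therefore no in-paper argument against which to check your proposal.

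That said, the paper does prove the dihedral analogue, Theorem~\ref{maintheorem}, by techniques explicitly transplanted from those classical sources, so one can compare your outline to that parallel development. Your overall architecture matches: the valuation shift under $f=\sigma-1$ plays exactly the role of $w$ in Proposition~\ref{proraisevaluation}; the numbers $\nu_i$ and $n_i$ arise just as in Section~\ref{sectgener}; your ``graded module'' test object is the lattice $\mathfrak{A}_\theta$ of Proposition~\ref{propintbasisL}; and the continued-fraction combinatorics you anticipate for part~(4) is precisely what is carried out in Section~\ref{secthighram} via Propositions~\ref{proentrieseasy} and~\ref{proentries}. One point where your sketch is thinner than the actual argument: in part~(2) you assert that $\sum_i c_if^i\in\mathfrak{A}_{L/K}$ iff each $c_if^i\in\mathfrak{A}_{L/K}$ separately. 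This decoupling is not automatic, since applying $f^i$ to a basis element $\pi_K^{-\nu_j}f^j\cdot\theta$ can push the exponent past $p-1$, where the valuations no longer separate cleanly; the paper handles the analogous step through Lemmas~\ref{1lemmaassocorder}--\ref{lemaassoccoord}, and in the cyclic case this is where the bound on $\nu_{p-1}$ (hence on $t$) enters. Apart from that, your plan is a faithful high-level summary of the classical route.
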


It is well known that the condition on $t$ in (1) is equivalent to $t=\frac{pe}{p-1}$. These extensions are called maximally ramified, as this is the maximal value that $t$ may take for a cyclic degree $p$ extension. Accordingly, extensions satisfying the condition of the fourth statement are called almost maximally ramified.

The first three statements are essentially announced in \cite{ferton1972}, \cite{ferton1974} and \cite{bertrandiasferton}, and in some cases short sketches of proofs are given. Recently, Del Corso, Ferri and Lombardo \cite{delcorsoferrilombardo} provided an alternative proof of these three statements using the notion of minimal index of a Galois extension of $p$-adic fields. The third statement has also appeared as a particular case of \cite[Lemma 4.1]{byottchildselder} and \cite[Corollary 3.6]{elder} (in a slightly weaker form) in the language of scaffolds. The fourth statement is treated in \cite{bertrandiasbertrandiasferton}, but again many of the details are omitted and it does not provide a significant insight on the ideas that lead to the proofs of the stated results. However, a detailed proof appears in Ferton's PhD thesis \cite[Chapitre II]{fertonthesis}. One of the aims of this paper is to spread some of these ideas which have not been available for the general public, in this case working with another family of degree $p$ extensions.

A cyclic degree $p$ extension of $p$-adic fields is a degree $p$ extension whose normal closure is itself. In this paper we work with degree $p$ extensions of $p$-adic fields $L/K$ with normal closure $\widetilde{L}$ such that the Galois group of $\widetilde{L}/K$ is isomorphic to the dihedral group $D_p$ of $2p$ elements. The main difference is that such an extension $L/K$ is not Galois. We will make use of the setting provided by Hopf-Galois theory in order to study the module structure of $\mathcal{O}_L$.

A finite extension $L/K$ is said to be Hopf-Galois if there is a $K$-Hopf algebra $H$ and a $K$-linear action $\cdot\colon H\otimes_KL\longrightarrow L$ such that $L$ is an $H$-module algebra and the canonical map $L\otimes_KH\longrightarrow\mathrm{End}_K(L)$ induced by the action $\cdot$ is a $K$-linear isomorphism. In that case, we say that $(H,\cdot)$ is a Hopf-Galois structure or that $L/K$ is $H$-Galois. Every Galois extension is Hopf-Galois, as the $K$-group algebra $K[G]$ of its Galois group together with the Galois action is a Hopf-Galois structure, called the classical Galois structure. Now, if $L/K$ is an $H$-Galois extension of $p$-adic fields, the associated order of $\mathcal{O}_L$ in $H$ is defined as $$\mathfrak{A}_H=\{h\in H\,|\,h\cdot x\in\mathcal{O}_L\hbox{ for all }x\in\mathcal{O}_L\},$$ and it turns out to be the unique $\mathcal{O}_K$-order in $H$ over which $\mathcal{O}_L$ can be free (a proof of this can be found in \cite[(12.5)]{childs}). Then, the problem turns to the study of the $\mathfrak{A}_H$-freeness of $\mathcal{O}_L$.

From now on and unless stated otherwise, $p$ is an odd prime and $L/K$ denotes a degree $p$ extension of $p$-adic fields with dihedral normal closure $\widetilde{L}$. By Byott's uniqueness theorem \cite[Theorem 2]{byottuniqueness}, this admits a unique Hopf-Galois structure $H$, which is almost classically Galois. In analogy with the cyclic case, the associated order of $\mathcal{O}_L$ in $H$ will be denoted by $\mathfrak{A}_{L/K}$. In this paper, we give a complete answer to the question of whether $\mathcal{O}_L$ is $\mathfrak{A}_{L/K}$-free or not.

Surprisingly, most of the techniques used for the cyclic case in \cite{fertonthesis} translate naturally to this situation, giving rise to similar arithmetical information. The case in which $\widetilde{L}/K$ is not totally ramified is treated in Section \ref{nontotramified} and we prove that the freeness on $L/K$ follows exactly the same criteria in Theorem \ref{cyclicpfreeness}. 

Otherwise, if $\widetilde{L}/K$ is totally ramified, then we can imitate the setting of the cyclic case. A generator of $H$ as a $K$-algebra is $w=z(\sigma-\sigma^{-1})$, where $z\in\widetilde{L}-K$ is such that $z^2\in K$ and $\sigma$ is the order $p$ element in $\mathrm{Gal}(\widetilde{L}/K)$. While in the cyclic case the behaviour depends on the ramification jump of the extension, in this one the relevant number is $\ell\coloneqq\frac{p+t}{2}$, where $t$ is the ramification jump of $\widetilde{L}/K$ (from \cite[Chapter IV, Proposition 9]{serre} it is easily proved that $\ell$ is always an integer number). This is due to the fact that letting $w$ act on $\mathcal{O}_L$ increases valuations by at least $\ell$ (see Proposition \ref{proraisevaluation}). Note that this number $\ell$ is different from the ramification jump of the extension $L/K$ denoted in \cite{elder} in the same way; with our notation, that one is $\frac{t}{2}$ (see \cite[Chapter IV, \textsection 3, Remark 2]{serre} for a definition of ramification jump of a subextension of a Galois extension).

As a result of the resemblance with the cyclic case, we will prove the following result, quite analogous to Theorem \ref{cyclicpfreeness}.

\begin{teo}\label{maintheorem} Let $L/K$ be a degree $p$ extension of $p$-adic fields with normal closure $\widetilde{L}$ such that $G\coloneqq\mathrm{Gal}(\widetilde{L}/K)\cong D_p$. Let $e$ be the ramification index of $K/\mathbb{Q}_p$ and let $H$ be the only Hopf-Galois structure on $L/K$. Assume that $\widetilde{L}/K$ is totally ramified. Let $t$ be the ramification jump of $\widetilde{L}/K$ (i.e, the integer number $t$ such that $G_t\cong C_p$ and $G_{t+1}=\{1\}$) and let $a$ be the remainder of $\ell\coloneqq\frac{p+t}{2}$ mod $p$.
\begin{itemize}
    \item[(1)] If $a=0$, then $\mathfrak{A}_{L/K}$ is the maximal $\mathcal{O}_K$-order in $H$ and $\mathcal{O}_L$ is $\mathfrak{A}_{L/K}$-free.
    \item[(2)] If $a\neq0$, $\mathfrak{A}_{L/K}$ is $\mathcal{O}_K$-free with generators $\pi_K^{-n_i}w^i$, where $n_i=\mathrm{min}_{0\leq j\leq p-1-i}(\nu_{i+j}-\nu_j)$ and $\nu_i=\Big\lfloor\frac{a+i\ell}{p}\Big\rfloor$ for every $0\leq i\leq p-1$.
    \item[(3)] If $a$ divides $p-1$, then $\mathcal{O}_L$ is $\mathfrak{A}_{L/K}$-free. Moreover, if $t<\frac{2pe}{p-1}-2$, then the converse holds.
    \item[(4)] If $t\geq\frac{2pe}{p-1}-2$, then $\mathcal{O}_L$ is $\mathfrak{A}_{L/K}$-free if and only if the length of the expansion of $\frac{\ell}{p}$ as continued fraction is at most $4$.
\end{itemize}
\end{teo}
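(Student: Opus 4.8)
The plan is to establish a precise dictionary between the present situation and the cyclic degree $p$ case of Theorem \ref{cyclicpfreeness}, with the integer $\ell=\frac{p+t}{2}$ playing the role of the ramification jump $t$. First I would make the Hopf algebra $H$ explicit. Since $L/K$ is almost classically Galois with $\mathrm{Gal}(\widetilde{L}/K)=D_p$ and the normal subgroup $N=\langle\rho\rangle\cong C_p$ affording the unique Hopf-Galois structure, one has $H=(\widetilde{L}[N])^{D_p}$, a $K$-algebra of dimension $p$. I would exhibit a distinguished generator $w\in H$ playing the role of the element $f$ in Theorem \ref{cyclicpfreeness}(2), so that $\{1,w,\dots,w^{p-1}\}$ is a $K$-basis of $H$, and record the action of $w$ on $L$. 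The whole problem then reduces to the computation of $\mathfrak{A}_{L/K}=\{\lambda\in H\mid \lambda\cdot\mathcal{O}_L\subseteq\mathcal{O}_L\}$ together with the ensuing freeness question, exactly as in Ferton's treatment.

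The heart of the argument is a valuation lemma describing how $w$ shifts valuations in $L$. Concretely, I would fix a uniformizer $\pi_L$ of $L$ and prove that, after choosing a suitable $\mathcal{O}_L$-generator, the operators $w^i$ raise $L$-valuations by the amounts encoded in $\nu_i=\lfloor\frac{a+i\ell}{p}\rfloor$, so that the shift per application of $w$ is governed by $\ell$ rather than by the naive ramification number $\frac{t}{2}$ of $L/K$. This is where \cite[Proposition 9]{serre} enters, both to guarantee that $\ell$ is an integer and to transfer the ramification data of $\widetilde{L}/K$ (controlled by $t$) down to the $H$-action on $L$. Once this lemma is in place, the matrix of the $w$-action with respect to $\{1,w,\dots,w^{p-1}\}$ has the same shape as the matrix of the $f$-action in the cyclic case, with $t$ replaced by $\ell$.

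Granting the valuation lemma, parts (1) and (2) follow by direct computation: the numbers $n_i=\min_{0\le j\le p-1-i}(\nu_{i+j}-\nu_j)$ are precisely the valuations one must remove to make $\pi_K^{-n_i}w^i$ lie in $\mathfrak{A}_{L/K}$, and one verifies that these $p$ elements form an $\mathcal{O}_K$-basis; the case $a=0$ is the one in which the $\nu_i$ align so that $\mathfrak{A}_{L/K}$ becomes the maximal order and $\mathcal{O}_L$ is automatically free. For parts (3) and (4) I would transplant Ferton's freeness criterion: the freeness of $\mathcal{O}_L$ over $\mathfrak{A}_{L/K}$ reduces to the existence of a generator, which Ferton reformulates as an arithmetic condition on the continued fraction expansion of $\frac{\ell}{p}$ (the divisibility $a\mid p-1$ in (3), and length at most $4$ in (4)). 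The thresholds $t<\frac{2pe}{p-1}-2$ and $t\ge\frac{2pe}{p-1}-2$ encode, via $\ell=\frac{p+t}{2}$, the condition that $\ell$ lie within distance one of its maximal possible value, paralleling the almost maximally ramified regime $t\ge\frac{pe}{p-1}-1$ of the cyclic case; the extra factor of two reflects the tame quadratic subextension $\widetilde{L}/L$, which doubles $L$-valuations when measured in $\widetilde{L}$.

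The main obstacle will be the valuation lemma of the second paragraph. Unlike the cyclic case, $L/K$ is not Galois, so $w$ does not act as a field automorphism of $L$ and one cannot read off valuation shifts directly; instead $w$ is assembled from the Galois action of $\rho$ on $\widetilde{L}$, and one must descend through $\widetilde{L}/L$ while tracking the $\frac{p}{2}$ discrepancy between $\ell$ and $\frac{t}{2}$. Verifying that the integrality and shift data packaged in $\ell$ (and not in $\frac{t}{2}$) are what genuinely control the $H$-action is the step that makes the cyclic machinery applicable, and is therefore the crux of the whole theorem.
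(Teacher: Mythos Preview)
Your overall strategy matches the paper closely: the generator is $w = z(\sigma - \sigma^{-1})$, where $z$ generates the quadratic subfield $M/K$, so that $H = K[w]$; the valuation lemma is Proposition~\ref{proraisevaluation}, which gives $v_L(w\cdot x) \geq \ell + v_L(x)$ with equality when $p \nmid v_L(x)$; and from this, parts (2) and (4) proceed essentially as you outline, by transplanting the Del Corso--Ferri--Lombardo and Bertrandias--Bertrandias--Ferton arguments with $\ell$ in place of $t$.

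There is, however, a genuine gap in your treatment of (1). You write that $a=0$ is the case ``in which the $\nu_i$ align so that $\mathfrak{A}_{L/K}$ becomes the maximal order,'' as though it drops out of the same computation as (2). It does not. When $a=0$ one has $p\mid\ell$, so for any $\theta\in L$ the valuations $v_L(w^i\cdot\theta)$ are all congruent modulo $p$; the elements $w^i\cdot\theta$ therefore cannot form a $K$-basis of $L$, and there is no $H$-normal basis generator of the shape $\pi_L^a$ from which to read off the $\nu_i$. The paper handles this case by an entirely separate argument (Section~\ref{sectmaxram}): it descends the Kummer structure of the maximally ramified cyclic extension $\widetilde{L}/M$ through the identity $L\cap M=K$ to produce a uniformizer $\alpha\in L$ with $\alpha^p\in\mathcal{O}_K$ on whose powers $w$ acts \emph{diagonally}, namely $w\cdot\alpha^j=\lambda_j\alpha^j$ with $\lambda_j\in\mathcal{O}_K$. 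From this diagonal action one extracts a full system of orthogonal idempotents in $\mathfrak{A}_{L/K}$, proving maximality and freeness directly.

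For (3) the paper takes a shortcut you do not mention: rather than transplanting Ferton's divisibility argument, it invokes Elder's scaffold construction for typical degree $p$ extensions, which has precision $\mathfrak{c}=pe-\frac{p-1}{2}t$, and applies the Byott--Childs--Elder freeness criterion. The paper remarks that a direct Ferton-style proof is also possible, so your route would succeed; the scaffold approach is simply more economical once that machinery is available.
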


The first statement is proved in Section \ref{sectinfdegp}, and it is derived from some nice arithmetic properties which are analogous to the ones for a maximally ramified cyclic degree $p$ extension (see Lemma \ref{lemamaxram}). In Section \ref{sectnonmaxram} we will assume that $L/K$ is not maximally ramified and for $\theta=\pi_L^a$ we will introduce the lattice $$\mathfrak{A}_{\theta}=\{\lambda\in H\,|\,\lambda\cdot\theta\in\mathcal{O}_L\}.$$ This is the natural adaptation of the approach followed in \cite{ferton1972}. We will be able to translate the arguments in \cite[Theorem 7.2]{delcorsoferrilombardo} to prove Theorem \ref{maintheorem} (2). On the other hand, our proof of Theorem \ref{maintheorem} (3) will be a consequence of the work by Elder \cite{elder} with what he calls typical degree $p$ extensions: separable totally ramified degree $p$ extensions that are not generated by a root of a prime element. For these, the criteria for the freeness is proved by means of the theory of scaffolds (see Section \ref{sectscaffolds}). Note that the condition on $t$ in Theorem \ref{maintheorem} (3) is equivalent to $\frac{t}{2}<\frac{pe}{p-1}-1$, recovering the bound in Theorem \ref{cyclicpfreeness}, in this case for the ramification jump $\frac{t}{2}$ of $L/K$. Moreover, this is an improvement of \cite[Corollary 3.6 (2)]{elder} for this class of extensions, where the same result is obtained under the condition $\frac{t}{2}<\frac{pe}{p-1}-2$. The last statement will be proved in Section \ref{secthighram} by characterizing when $\mathfrak{A}_{\theta}$ is principal as $\mathfrak{A}_{L/K}$-fractional ideal, where the theory of continued fractions introduced in Section \ref{sectcontfrac} will play an important role. 

Finally, in Section \ref{sectconseq} we will derive some consequences and particular cases of Theorem \ref{maintheorem}. We shall prove that the freeness always holds in the case that $K$ is unramified over $\mathbb{Q}_p$. Moreover, we will make use of Proposition \ref{progivenram}, which proves the existence of dihedral extensions with given ramification, to construct examples of extensions $L/K$ illustrating new behaviours.

If $L/K$ is an arbitrary degree $p$ extension of $p$-adic fields, then the Galois group of its normal closure is a Frobenius group of the form $C_p\rtimes C_d$, where $d$ divides $p-1$. Most of the setting established in Sections \ref{prelimpadic} and \ref{sectinfdegp} remains valid in this case. However, the Hopf-Galois structure is generated by an element different from $w$, so that Proposition \ref{w-poly} is no longer valid. These cases require further investigation.

\section{Preliminaries}\label{prelimpadic}

A Hopf-Galois structure on a degree $n$ extension $L/K$ of fields is a pair $(H,\cdot)$ where $H$ is an $n$-dimensional $K$-Hopf algebra, $\cdot$ is a $K$-linear map $H\otimes_KL\longrightarrow L$ which is compatible with the Hopf algebra structure of $H$ (accurately, it endows $L$ with an $H$-module algebra structure, see \cite[Chapter 2, Section 2.1, Page 39]{underwood}) and the canonical map \begin{equation}\label{mapj}
    \begin{array}{rccl}
    j\colon & L\otimes_KH & \longrightarrow & \mathrm{End}_K(L) \\
     & x\otimes h & \longmapsto & y\mapsto x(h\cdot y)
\end{array}
\end{equation} is an isomorphism of $K$-vector spaces. When such a pair exists, we say that $L/K$ is $H$-Galois. Throughout this paper, we will use the label $H$ indistinctly for a Hopf-Galois structure on $L/K$ and its underlying Hopf algebra.

Let $p$ be an odd prime number. By a $p$-adic field we mean any finite extension of the field $\mathbb{Q}_p$ of $p$-adic numbers, that is, a local field with characteristic zero and residue characteristic $p$. For any $p$-adic field $F$, we call $\mathcal{O}_F$ the ring of integers of $F$, $v_F$ its valuation, $\pi_F$ its uniformizer, and $\mathfrak{p}_F$ the ideal generated by $\pi_F$ in $\mathcal{O}_F$.

Let $E/K$ be a Galois degree $n$ extension of $p$-adic fields and assume that $v_p(n)=1$. Let us denote by $\{G_i\}_{i=-1}^{\infty}$ the chain of ramification groups of $E/K$. If $E/K$ is unramified, the ramification jump of $E/K$ is $t=-1$. Otherwise, we have by \cite[Chapter IV, Corollary 3]{serre} that $G_1\cong C_p$, the cyclic group with $p$ elements. In that case, the ramification jump of $E/K$ is defined as the integer number $t$ such that $G_t\cong C_p$ and $G_{t+1}$ is trivial. Note that we ignore any possible jump prior to $G_1$ in the chain of ramification groups, as it is done for instance in \cite[\textsection 1.2]{berge1978}.

Let $e$ be the ramification index of $K$ over $\mathbb{Q}_p$. By \cite[Proposition 2]{berge1978}, we have $$1\leq t\leq\frac{rpe}{p-1}.$$ Note that even though in the statement of that result it is assumed that $e=1$, this fact is not used in the proof and it also works for the general case. As in \cite{berge1978}, if $t=\frac{rpe}{p-1}$ (resp. $t\geq\frac{rpe}{p-1}-1$) we will say that $E/K$ is maximally ramified (resp. almost maximally ramified).

\subsection{A method to determine the associated order}\label{sectredmethod}

Let $L/K$ be a degree $n$ $H$-Galois extension of $p$-adic fields. In \cite[Section 3]{gilrioinduced}, Rio and the author introduced a method to determine an $\mathcal{O}_K$-basis of the associated order $\mathfrak{A}_H$ in $H$ and to find whether $\mathcal{O}_L$ is $\mathfrak{A}_H$-free or not. A detailed exposition of this method can be found in \cite[Chapter 2]{gilthesis}. In this section, we outline the part of the method devoted to obtain a basis of the associated order, which will be used in Section \ref{sectmaxram}.

Let us fix a $K$-basis $W=\{w_i\}_{i=1}^{n}$ of $H$ and an $\mathcal{O}_K$-basis $B=\{\gamma_j\}_{j=1}^{n}$ of $\mathcal{O}_L$. For each $1\leq i,j\leq n$, we write $$w_i\cdot\gamma_j=\sum_{k=1}^nm_{ij}^{(k)}(H,L)\gamma_k,\quad m_{ij}^{(k)}(H,L)\in K.$$ Let us denote $M_j(H,L)=(m_{ij}^{(k)}(H,L))_{k,i=1}^{n}$. The matrix of the action of $H$ on $L$ is defined as $$M(H,L)=\begin{pmatrix}
\\[-1ex]
M_1(H,L) \\[1ex] 
\hline\\[-1ex]
\cdots \\[1ex]
\hline \\[-1ex]
M_n(H,L)\\
\\[-1ex]
\end{pmatrix}.$$ For $0\leq j\leq n-1$, the submatrix $M_j(H,L)$ will be referred to as the $j$-th block of $M(H,L)$. Let $\rho_H\colon H\longrightarrow\mathrm{End}_K(L)$ be the restriction to $H$ of the map $j$ introduced in \eqref{mapj}, where $H$ is canonically embedded in $L\otimes_KH$. 

The matrix $M(H,L)$ owes its name to the fact that it is the matrix of $\rho_H$ as $K$-linear map when we fix in $H$ the $K$-basis $W$ and in $\mathrm{End}_K(L)$ the $K$-basis $\{\varphi_i\}_{i=1}^{n^2}$ defined as follows: for each $1\leq i\leq n^2$, there are unique $1\leq k,j\leq n$ such that $i=k+(j-1)n$. Then $\varphi_i$ is the $K$-endomorphism of $L$ that sends $\gamma_j$ to $\gamma_k$ and the other $\gamma_l$ to $0$, that is, $\varphi_i(\gamma_l)=\delta_{jl}\gamma_k$.

We have that there is some unimodular matrix $U\in\mathcal{M}_{n^2}(\mathcal{O}_K)$ such that $$UM(H,L)=\begin{pmatrix}D \\ \hline \\[-2ex] O\end{pmatrix},$$ for some invertible matrix $D\in\mathcal{M}_n(\mathcal{O}_K)$ (see \cite[Chapter 2, Theorem 2.15]{gilthesis}). In the terminology of \cite{gilthesis}, $D$ is a reduced matrix. An example of reduced matrix is the content of $D$ times the Hermite normal form of the primitive form of $D$ (see \cite[Definition 2.13]{gilthesis} for a definition of content and primitive form for matrices). Now, the associated order $\mathfrak{A}_H$ is determined from the matrix $M(H,L)$ as follows.

\begin{teo}\label{teoassocorder} Let $D$ be a reduced matrix of $M(H,L)$ and call $D^{-1}=(d_{ij})_{i,j=1}^{n}$. Then, an $\mathcal{O}_K$-basis of the associated order is given by $$v_i=\sum_{l=1}^{n}d_{li}w_l,\quad1\leq i\leq n.$$ Moreover, for every $1\leq i,j\leq n$, the coordinates of $v_i\cdot\gamma_j$ with respect to the basis $B$ are given by the product of $M(H,L)$ by the column vector $(d_{1i},\dots,d_{ni})^t$.
\end{teo}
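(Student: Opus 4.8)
The plan is to translate the defining condition of $\mathfrak{A}_H$ into a single integrality condition on coordinate vectors, and then to read off the generators from the reduction $UM(H,L)=\binom{D}{O}$ by pure linear algebra.

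First I would write an arbitrary element of $H$ as $h=\sum_{i=1}^n c_iw_i$ with coordinate vector $c=(c_1,\dots,c_n)^t\in K^n$ in the basis $W$. Since $B$ is an $\mathcal{O}_K$-basis of $\mathcal{O}_L$ and the action is $\mathcal{O}_K$-linear, $h\in\mathfrak{A}_H$ if and only if $h\cdot\gamma_j\in\mathcal{O}_L$ for every $1\le j\le n$. Expanding
$$h\cdot\gamma_j=\sum_{i}c_i(w_i\cdot\gamma_j)=\sum_k\Big(\sum_i m_{ij}^{(k)}(H,L)c_i\Big)\gamma_k$$
and using that the $i$-th column of $M_j(H,L)$ records the coordinates of $w_i\cdot\gamma_j$ in $B$, the coordinate vector of $h\cdot\gamma_j$ is exactly $M_j(H,L)\,c$. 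Hence $h\cdot\gamma_j\in\mathcal{O}_L$ is equivalent to $M_j(H,L)\,c\in\mathcal{O}_K^n$, and stacking these conditions over all $j$ gives the clean reformulation $h\in\mathfrak{A}_H$ if and only if $M(H,L)\,c\in\mathcal{O}_K^{n^2}$.

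Next I would exploit the unimodular reduction. Because $U\in\mathcal{M}_{n^2}(\mathcal{O}_K)$ is invertible over $\mathcal{O}_K$, both $U$ and $U^{-1}$ have integral entries, so multiplication by $U$ preserves $\mathcal{O}_K^{n^2}$; thus $M(H,L)\,c\in\mathcal{O}_K^{n^2}$ if and only if $UM(H,L)\,c\in\mathcal{O}_K^{n^2}$. By construction $UM(H,L)\,c=\binom{D}{O}c=\binom{Dc}{0}$, whose lower block is automatically integral, so the condition collapses to $Dc\in\mathcal{O}_K^n$. Since $D$ is invertible, writing $c=D^{-1}(Dc)$ shows that the set of admissible coordinate vectors is precisely the lattice $D^{-1}\mathcal{O}_K^n$, which is $\mathcal{O}_K$-free with basis the columns of $D^{-1}$. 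The $i$-th column of $D^{-1}$ is $(d_{1i},\dots,d_{ni})^t$, and this corresponds under $c\mapsto\sum_l c_lw_l$ to the element $v_i=\sum_{l=1}^n d_{li}w_l$; these are therefore an $\mathcal{O}_K$-basis of $\mathfrak{A}_H$, as claimed.

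Finally, the \emph{moreover} statement is immediate: applying the coordinate formula of the first step to $h=v_i$, whose coordinate vector is $(d_{1i},\dots,d_{ni})^t$, the coordinates of $v_i\cdot\gamma_j$ in $B$ are $M_j(H,L)\,(d_{1i},\dots,d_{ni})^t$, and stacking over $j$ yields the product of $M(H,L)$ with $(d_{1i},\dots,d_{ni})^t$. I do not anticipate a genuine obstacle here: the argument is a routine reduction over the discrete valuation ring $\mathcal{O}_K$, and the only points requiring care are the bookkeeping of the index conventions (that the columns of $M_j(H,L)$ are indexed by $i$) and the observation that the unimodularity of $U$ is exactly what guarantees that the reduction preserves integrality rather than merely rational equivalence.
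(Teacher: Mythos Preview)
Your argument is correct. The paper does not actually give its own proof of this theorem; it simply remarks that the case of a particular reduced matrix is proved in \cite[Theorem 3.5]{gilrioinduced} and that ``the same proof works for any reduced matrix.'' Your write-up is precisely that standard argument: translate membership in $\mathfrak{A}_H$ into the integrality condition $M(H,L)c\in\mathcal{O}_K^{n^2}$, use the unimodularity of $U$ to reduce this to $Dc\in\mathcal{O}_K^n$, and read off the basis from the columns of $D^{-1}$. So there is no divergence to discuss.
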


In \cite[Theorem 3.5]{gilrioinduced} this is proved for a particular matrix $D$, but the same proof works for any reduced matrix.

\subsection{$H$-normal bases and algebraic integers}\label{secthnormalbases}

Let $L/K$ be a degree $n$ $H$-Galois extension. The $K$-linear action $\cdot$ endows $L$ with an $H$-module structure. From \cite[(2.16)]{childs}, we know that $L$ is free of rank one as $H$-module. Note that if $L/K$ is Galois and $H$ is chosen to be its classical Galois structure, we recover the normal basis theorem. Accordingly, the elements of any $K$-basis of $H$ acting on a generator $\theta$ of $L$ as $H$-module form a $K$-basis of $L$. However, unlike the Galois case, in general there is no canonical choice for a $K$-basis of $H$. In any case, we will say that any generator $\theta$ of $L$ as $H$-module generates an $H$-normal basis or that it is an $H$-normal basis generator.

Now, assume that the extension $L/K$ is of $p$-adic fields. The associated order of $\mathcal{O}_L$ in $H$ is defined as $$\mathfrak{A}_H=\{h\in H\,|\,h\cdot x\in\mathcal{O}_L\hbox{ for all }x\in\mathcal{O}_L\},$$ that is, the maximal $\mathcal{O}_K$-order in $H$ among the ones acting on $\mathcal{O}_L$ (see \cite[(12.4)]{childs}). It inherits the $\mathcal{O}_K$-algebra structure of $H$ and has rank $n$ as $\mathcal{O}_K$-module, so it is an $\mathcal{O}_K$-order in $H$. Moreover, it endows $\mathcal{O}_L$ with a module structure. Recall that if $\mathfrak{H}$ is an $\mathcal{O}_K$-order in $H$ such that $\mathcal{O}_L$ is a free $\mathfrak{H}$-module, then $\mathfrak{H}=\mathfrak{A}_H$. However, $\mathcal{O}_L$ is not $\mathfrak{A}_H$-free in general.

In the situation corresponding to Theorem \ref{maintheorem} (4), the $\mathfrak{A}_{L/K}$-freeness of $\mathcal{O}_L$ will be studied through the following object.

\begin{defi} For an element $\theta\in\mathcal{O}_L$ generating an $H$-normal basis for $L/K$, we denote $$\mathfrak{A}_{\theta}=\{h\in H\,|\,h\cdot\theta\in\mathcal{O}_L\}.$$
\end{defi}

It is trivial that $\mathfrak{A}_H\subseteq\mathfrak{A}_{\theta}$ and $\mathfrak{A}_{\theta}$ is an $\mathfrak{A}_H$-module.

If we choose $L/K$ to be Galois and $H$ as its classical Galois structure, we recover the definition in \cite[\textsection 1.2]{ferton1972}. In that case, $\mathfrak{A}_{\theta}$ contains the $\mathcal{O}_K$-group algebra of the Galois group of $L/K$. In our situation, we can find an analogous $\mathcal{O}_K$-order in $\mathfrak{A}_{\theta}$. By the Greither-Pareigis theorem, we can write $H=\widetilde{L}[N]^G$, where $\widetilde{L}$ is the normal closure of $L/K$ and for $G'=\mathrm{Gal}(\widetilde{L}/L)$, $N$ is a regular and $G$-stable subgroup of $\mathrm{Perm}(G/G')$ (see \cite{greitherpareigis} or \cite[Chapter 2]{childs} for more details). Then $\mathcal{O}_{\widetilde{L}}[N]^G$ is an $\mathcal{O}_K$-order in $H$ and Truman \cite[Proposition 2.5]{truman2011} proves that $\mathcal{O}_{\widetilde{L}}[N]^G\subseteq\mathfrak{A}_H$.

\begin{pro}\label{proatheta} Let $L/K$ be a degree $n$ $H$-Galois extension of $p$-adic fields. Let $\theta\in\mathcal{O}_L$ be an element generating an $H$-normal basis for $L/K$. We have that:
\begin{itemize}
    \item[(1)] $\mathfrak{A}_{\theta}$ is a fractional $\mathfrak{A}_H$-ideal.
    \item[(2)] $\mathcal{O}_L=\mathfrak{A}_{\theta}\cdot\theta$, and the map $\varphi\colon\mathfrak{A}_{\theta}\longrightarrow\mathcal{O}_L$ defined by $\varphi(\lambda)=\lambda\cdot\theta$ is an isomorphism of $\mathfrak{A}_H$-modules.
\end{itemize}
\end{pro}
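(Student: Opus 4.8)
The plan is to exploit the isomorphism $\varphi(\lambda)=\lambda\cdot\theta$ as the organizing principle, since both parts of the statement essentially reduce to understanding this map. First I would prove part (2) in two halves: injectivity and the image computation. For injectivity, note that $\theta$ is an $H$-normal basis generator, so the $K$-linear map $H\to L$, $h\mapsto h\cdot\theta$, is an isomorphism of $K$-vector spaces (this is precisely the freeness of $L$ as a rank-one $H$-module recalled in \S\ref{secthnormalbases}). Restricting this isomorphism to $\mathfrak{A}_\theta\subseteq H$ shows $\varphi$ is injective. For the image, the inclusion $\varphi(\mathfrak{A}_\theta)\subseteq\mathcal{O}_L$ is immediate from the very definition of $\mathfrak{A}_\theta$. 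For the reverse inclusion, take any $x\in\mathcal{O}_L$; since $h\mapsto h\cdot\theta$ is a $K$-linear bijection $H\to L$, there is a unique $\lambda\in H$ with $\lambda\cdot\theta=x$, and because $x\in\mathcal{O}_L$ this $\lambda$ lies in $\mathfrak{A}_\theta$ by definition. Hence $\varphi(\mathfrak{A}_\theta)=\mathcal{O}_L$ and $\varphi$ is a bijection. That $\varphi$ is a homomorphism of $\mathfrak{A}_H$-modules is the associativity identity $\varphi(a\lambda)=(a\lambda)\cdot\theta=a\cdot(\lambda\cdot\theta)=a\cdot\varphi(\lambda)$ for $a\in\mathfrak{A}_H$, $\lambda\in\mathfrak{A}_\theta$, using that $L$ is an $H$-module.

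Next I would deduce part (1) from part (2). We already noted that $\mathfrak{A}_\theta$ is an $\mathfrak{A}_H$-submodule of $H$, so it remains to check the two defining conditions of a fractional ideal: that $\mathfrak{A}_\theta$ is a full $\mathcal{O}_K$-lattice in $H$, and that it is finitely generated (equivalently, that some nonzero element of $\mathcal{O}_K$ clears denominators, i.e. $c\,\mathfrak{A}_\theta\subseteq\mathfrak{A}_H$ for some $c\in\mathcal{O}_K\setminus\{0\}$). That $\mathfrak{A}_\theta$ is a full lattice follows from part (2): via the $K$-linear isomorphism $H\cong L$ induced by $\theta$, the submodule $\mathfrak{A}_\theta$ corresponds to $\mathcal{O}_L$, which is a full $\mathcal{O}_K$-lattice in $L$; transporting back, $\mathfrak{A}_\theta$ is a full $\mathcal{O}_K$-lattice in $H$ of rank $n$. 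For the denominator-clearing condition, since both $\mathfrak{A}_H$ and $\mathfrak{A}_\theta$ are full lattices in the same $K$-vector space $H$, there automatically exists $c\in\mathcal{O}_K\setminus\{0\}$ with $c\,\mathfrak{A}_\theta\subseteq\mathfrak{A}_H$; concretely one can take $c$ to be any common denominator of the (finitely many) $\mathfrak{A}_H$-coordinates of a chosen $\mathcal{O}_K$-basis of $\mathfrak{A}_\theta$.

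I expect the argument to be almost entirely formal, so there is no deep obstacle; the only point requiring care is making sure the module and lattice claims are internally consistent. The mild subtlety is that $\mathfrak{A}_\theta$ must be shown to be genuinely \emph{full} of rank $n$ rather than merely a submodule, and this is exactly where part (2) does the work: without the identification $\mathfrak{A}_\theta\cong\mathcal{O}_L$ one would have to argue the lattice property directly. Organizing the proof so that part (2) is established first and part (1) is harvested from it is therefore the cleanest route, and it avoids any separate verification that $\mathfrak{A}_\theta$ spans $H$ over $K$.
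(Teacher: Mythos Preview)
Your proof is correct, and for part (2) it is essentially the same as the paper's: both arguments boil down to the observation that the $K$-linear isomorphism $H\to L$, $h\mapsto h\cdot\theta$, restricts to a bijection $\mathfrak{A}_\theta\to\mathcal{O}_L$ by the very definition of $\mathfrak{A}_\theta$.

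For part (1) you take a genuinely different route. The paper fixes an $\mathcal{O}_K$-basis $\{w_i\}$ of the concrete order $\mathcal{O}_{\widetilde{L}}[N]^G\subseteq\mathfrak{A}_H$ (coming from the Greither--Pareigis description of $H$), and uses the discriminant $d_\theta$ of $\{w_i\cdot\theta\}$ to exhibit an explicit denominator: $d_\theta\mathfrak{A}_\theta\subseteq\mathcal{O}_{\widetilde{L}}[N]^G\subseteq\mathfrak{A}_H$. You instead prove (2) first and then transport the lattice $\mathcal{O}_L\subset L$ back through the isomorphism $H\cong L$ to see that $\mathfrak{A}_\theta$ is a full $\mathcal{O}_K$-lattice, whence the existence of a clearing element $c$ follows abstractly from comparing two full lattices in $H$. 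Your approach is more self-contained and avoids invoking the Greither--Pareigis form of $H$; the paper's approach, on the other hand, produces the explicit inclusion $d_\theta\mathfrak{A}_\theta\subseteq\mathcal{O}_{\widetilde{L}}[N]^G$, which is reused verbatim in the proof of the next proposition (Proposition~\ref{prohnormalfreeness}) to show integrality of elements of $\mathfrak{A}_\theta$ over $\mathcal{O}_K$.
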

\begin{proof}
\begin{itemize}
    \item[(1)] Let $W=\{w_i\}_{i=1}^n$ be an $\mathcal{O}_K$-basis of the $\mathcal{O}_K$-order $\mathcal{O}_{\widetilde{L}}[N]^G$ in $H$ (so $W$ is also a $K$-basis of $H$) and let $d_{\theta}$ be the discriminant of $(w_i\cdot\theta)_{i=1}^n$ (note that $d_{\theta}\neq0$ because $\theta$ is an $H$-normal basis generator). For $\lambda=\sum_{i=1}^n\lambda_iw_i\in\mathfrak{A}_{\theta}$, we have that $\lambda\cdot\theta=\sum_{i=1}^n\lambda_iw_i\cdot\theta\in\mathcal{O}_L$. Since $d_{\theta}\mathcal{O}_L\subseteq\mathcal{O}_K[\{w_i\cdot\theta\}_{i=1}^n]$, we deduce that $d_{\theta}\lambda_i\in\mathcal{O}_K$ for every $1\leq i\leq n$. Hence $d_{\theta}\lambda\in\mathcal{O}_{\widetilde{L}}[N]^G\subseteq\mathfrak{A}_H$. Since $\lambda$ is arbitrary, $d_{\theta}\mathfrak{A}_{\theta}\subseteq\mathfrak{A}_H$, proving that $\mathfrak{A}_{\theta}$ is a fractional $\mathfrak{A}_H$-ideal.
    
    \item[(2)] The equality $\mathcal{O}_L=\mathfrak{A}_{\theta}\cdot\theta$ follows directly from the definition of $\mathfrak{A}_{\theta}$. On the other hand, since $\{w_i\cdot\theta\}_{i=1}^n$ is an $H$-normal basis, for all $\alpha\in\mathcal{O}_L$ there are unique elements $\lambda_i(\alpha)\in L$, $1\leq i\leq n$, such that $\alpha=\sum_{i=1}^n\lambda_i(\alpha)w_i\cdot\theta$. This defines a map $$\begin{array}{ccl}
        \mathcal{O}_L & \longrightarrow & \mathfrak{A}_{\theta} \\
        \alpha & \longmapsto & \sum_{i=1}^n\lambda_i(\alpha)w_i
    \end{array}$$ which is the inverse of $\varphi$, and hence $\varphi$ is an isomorphism of $\mathfrak{A}_H$-modules.
\end{itemize}
\end{proof}

Now, we relate the study of the $\mathfrak{A}_H$-freeness of $\mathcal{O}_L$ with $\mathfrak{A}_{\theta}$ by means of the following generalization of \cite[Proposition 1]{ferton1972} to this case.

\begin{pro}\label{prohnormalfreeness} Let $L/K$ be a degree $n$ $H$-Galois extension of $p$-adic fields. The following statements are equivalent:
\begin{itemize}
    \item[(1)] $\mathcal{O}_L$ is $\mathfrak{A}_H$-free.
    \item[(2)] There is an element $\theta\in\mathcal{O}_L$ generating an $H$-normal basis of $L$ such that $\mathfrak{A}_H=\mathfrak{A}_{\theta}$.
    \item[(3)] There is an element $\theta\in\mathcal{O}_L$ generating an $H$-normal basis of $L$ such that $\mathfrak{A}_{\theta}$ is a ring (namely, a subring of the underlying ring structure of $H$).
    \item[(4)] For every element $\theta\in\mathcal{O}_L$ generating an $H$-normal basis for $L$, $\mathfrak{A}_{\theta}$ is principal as $\mathfrak{A}_H$-fractional ideal.
\end{itemize}
\end{pro}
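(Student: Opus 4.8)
The plan is to prove the four statements equivalent by establishing a cycle of implications, using Proposition \ref{proatheta} as the central bridge between $\mathcal{O}_L$ and the fractional ideals $\mathfrak{A}_\theta$. The key observation throughout is that the isomorphism $\varphi\colon\mathfrak{A}_\theta\to\mathcal{O}_L$, $\lambda\mapsto\lambda\cdot\theta$, is an isomorphism of $\mathfrak{A}_H$-modules, so freeness statements about $\mathcal{O}_L$ translate directly into statements about $\mathfrak{A}_\theta$ as an $\mathfrak{A}_H$-module.

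First I would prove $(1)\Rightarrow(2)$. If $\mathcal{O}_L$ is $\mathfrak{A}_H$-free, pick a generator $\theta$, so that $\mathcal{O}_L=\mathfrak{A}_H\cdot\theta$. Since the action map is free of rank one, $\theta$ is in particular an $H$-normal basis generator (a free $\mathfrak{A}_H$-generator becomes, after base change to $K$, a free $H$-generator). Then $\mathcal{O}_L=\mathfrak{A}_H\cdot\theta$ combined with $\mathcal{O}_L=\mathfrak{A}_\theta\cdot\theta$ from Proposition \ref{proatheta}(2), together with the injectivity of $\varphi$, forces $\mathfrak{A}_H=\mathfrak{A}_\theta$. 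The implication $(2)\Rightarrow(3)$ is immediate, since $\mathfrak{A}_H$ is a ring. For $(3)\Rightarrow(1)$, suppose $\mathfrak{A}_\theta$ is a ring for some $H$-normal basis generator $\theta$. Since $\mathfrak{A}_\theta$ contains $\mathfrak{A}_H$ and is an $\mathcal{O}_K$-order that acts on $\mathcal{O}_L$ (because $\mathfrak{A}_\theta\cdot\mathcal{O}_L=\mathfrak{A}_\theta\cdot(\mathfrak{A}_\theta\cdot\theta)\subseteq\mathfrak{A}_\theta\cdot\theta=\mathcal{O}_L$ using that $\mathfrak{A}_\theta$ is closed under multiplication), the maximality of $\mathfrak{A}_H$ among $\mathcal{O}_K$-orders acting on $\mathcal{O}_L$ gives $\mathfrak{A}_\theta\subseteq\mathfrak{A}_H$, hence $\mathfrak{A}_\theta=\mathfrak{A}_H$. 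Applying $\varphi$ yields $\mathcal{O}_L=\mathfrak{A}_H\cdot\theta\cong\mathfrak{A}_H$, so $\mathcal{O}_L$ is $\mathfrak{A}_H$-free.

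It remains to weave in $(4)$. The cleanest route is to show $(1)\Rightarrow(4)$ and $(4)\Rightarrow(1)$ (or $(4)\Rightarrow(3)$). For $(1)\Rightarrow(4)$, assume $\mathcal{O}_L$ is $\mathfrak{A}_H$-free and let $\theta$ be an arbitrary $H$-normal basis generator. By Proposition \ref{proatheta}(2), $\mathfrak{A}_\theta\cong\mathcal{O}_L$ as $\mathfrak{A}_H$-modules, and $\mathcal{O}_L$ is free of rank one; therefore $\mathfrak{A}_\theta$ is free of rank one as $\mathfrak{A}_H$-module, which is exactly the assertion that $\mathfrak{A}_\theta$ is principal as an $\mathfrak{A}_H$-fractional ideal. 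Conversely, for $(4)\Rightarrow(1)$, choosing any single $\theta$ for which $\mathfrak{A}_\theta$ is principal gives $\mathfrak{A}_\theta\cong\mathfrak{A}_H$ as $\mathfrak{A}_H$-modules, and then $\mathcal{O}_L\cong\mathfrak{A}_\theta\cong\mathfrak{A}_H$ via $\varphi$ shows $\mathcal{O}_L$ is free.

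The subtlety to handle carefully, which I expect to be the main point requiring attention rather than a true obstacle, is the identification of ``principal as $\mathfrak{A}_H$-fractional ideal'' with ``free of rank one as $\mathfrak{A}_H$-module.'' A fractional ideal $\mathfrak{A}_\theta$ is principal when $\mathfrak{A}_\theta=\mathfrak{A}_H\cdot\mu$ for some unit $\mu$ in the total ring of fractions, i.e. $\mu\in H^\times$; this is precisely freeness of rank one because $\mathfrak{A}_H$ has no zero divisors issues in the relevant sense and the generator $\mu$ yields the isomorphism $\mathfrak{A}_H\xrightarrow{\sim}\mathfrak{A}_\theta$. I would state this equivalence explicitly and verify it is well-defined, since $H$ is generally not a field but a product of fields; the key is that $\mathfrak{A}_\theta$ spans $H$ over $K$, so a generating element is automatically a unit in $H$. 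Once this dictionary is in place, all four implications are short, and the fact that $(4)$ quantifies over \emph{every} $\theta$ while $(2)$ and $(3)$ are existential is reconciled precisely by the rank-one freeness being a property of $\mathcal{O}_L$ itself, transported identically to each $\mathfrak{A}_\theta$ through $\varphi$.
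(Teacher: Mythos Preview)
Your proposal is correct, and the equivalences $(1)\Leftrightarrow(2)$, $(2)\Rightarrow(3)$, and $(1)\Leftrightarrow(4)$ are handled essentially as in the paper (the paper writes out the generators explicitly for $(1)\Leftrightarrow(4)$ rather than invoking the isomorphism $\varphi$, but this is cosmetic).

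The one genuine difference is your argument for $(3)\Rightarrow(1)$. You observe directly that if $\mathfrak{A}_\theta$ is a ring then $\mathfrak{A}_\theta\cdot\mathcal{O}_L=\mathfrak{A}_\theta\cdot(\mathfrak{A}_\theta\cdot\theta)\subseteq\mathfrak{A}_\theta\cdot\theta=\mathcal{O}_L$, so $\mathfrak{A}_\theta$ is an $\mathcal{O}_K$-order in $H$ acting on $\mathcal{O}_L$, and the defining maximality of $\mathfrak{A}_H$ forces $\mathfrak{A}_\theta=\mathfrak{A}_H$. The paper instead shows that every element of $\mathfrak{A}_\theta$ is integral over $\mathcal{O}_K$ by a Noetherian argument (using $d_\theta\lambda^n\in\mathfrak{A}_\theta$ for all $n$), and from this integrality deduces that $\mathcal{O}_L$ is $\mathfrak{A}_\theta$-free, whence $\mathfrak{A}_\theta=\mathfrak{A}_H$. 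Your route is shorter and more transparent: it bypasses the integrality detour entirely and uses only the definition of the associated order. The paper's argument, on the other hand, is closer in spirit to Ferton's original treatment in the classical Galois case and does not rely on knowing in advance that $\mathfrak{A}_\theta$ is a full lattice (though this is already established in Proposition~\ref{proatheta}). Your handling of the ``principal fractional ideal $=$ free of rank one'' dictionary is also more careful than strictly necessary, but not wrong: the key point, as you note, is that $\mathfrak{A}_\theta$ spans $H$, so any $\mathfrak{A}_H$-generator is a unit in $H$.
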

\begin{proof}
The equivalence between (1) and (2), and the implication from (2) to (3), are trivial. Let us assume that $\mathfrak{A}_{\theta}$ is a subring of $H$ for some $\theta\in\mathcal{O}_L$ generating an $H$-normal basis for $L/K$. In particular the inherited multiplication from $H$ of two elements in $\mathfrak{A}_{\theta}$ belongs to $\mathfrak{A}_{\theta}$. Using that $\mathcal{O}_L=\mathfrak{A}_{\theta}\cdot\theta$ (see Proposition \ref{proatheta} (2)), one checks easily that $\mathcal{O}_L$ can be endowed with an $\mathfrak{A}_{\theta}$-module structure. Moreover, the map $\varphi\colon\,\mathfrak{A}_{\theta}\longrightarrow\mathcal{O}_L$ in Proposition \ref{proatheta} (2) is now an isomorphism of $\mathfrak{A}_{\theta}$-modules, so $\mathcal{O}_L$ is $\mathfrak{A}_{\theta}$-free. Hence $\mathfrak{A}_{\theta}=\mathfrak{A}_H$.

Finally, we prove the equivalence between (1) and (4). If $\mathcal{O}_L$ is $\mathfrak{A}_H$-free, there is some $\alpha\in\mathcal{O}_L$ such that $\mathcal{O}_L=\mathfrak{A}_H\cdot\alpha$. Let $\theta$ be an $H$-normal basis generator for $L/K$. Since $\mathfrak{A}_H\cdot\alpha=\mathfrak{A}_{\theta}\cdot\theta$, there is some $\varphi\in\mathfrak{A}_{\theta}$ such that $\alpha=\varphi\cdot\theta$. Then $\mathfrak{A}_{\theta}=\mathfrak{A}_H\varphi$. Conversely, if $\mathfrak{A}_{\theta}$ is $\mathfrak{A}_H$-principal, there is some $\varphi\in\mathfrak{A}_{\theta}$ such that $\mathfrak{A}_{\theta}=\mathfrak{A}_H\varphi$. Then $\mathcal{O}_L=(\mathfrak{A}_H\varphi)\cdot\theta=\mathfrak{A}_H\cdot(\varphi\cdot\theta)$.
\end{proof}

In practice, if we find an $H$-normal basis generator $\theta$ such that $\mathfrak{A}_H=\mathfrak{A}_{\theta}$, we know that $\mathcal{O}_L$ is $\mathfrak{A}_H$-free and that $\theta$ is a generator. Alternatively, we may try to determine whether $\mathfrak{A}_{\theta}$ is a principal $\mathfrak{A}_H$-ideal for some $\theta$. If it is not, then one obtains from the above that $\mathcal{O}_L$ is not $\mathfrak{A}_H$-free. However, if $\mathfrak{A}_{\theta}$ is a principal $\mathfrak{A}_H$-ideal for some $\theta$, the same proof of (4) implies (1) above gives that $\mathcal{O}_L$ is $\mathfrak{A}_H$-free. We deduce that if $\mathfrak{A}_{\theta}$ is $\mathfrak{A}_H$-principal for some $\theta$ generating an $H$-normal basis for $L$, so is for every such a $\theta$.

In order to study $\mathfrak{A}_{\theta}$ as $\mathfrak{A}_H$-ideal, for each $\alpha\in\mathfrak{A}_{\theta}$ we may consider the map $$\begin{array}{rccl}
    \psi_{\alpha}\colon & \mathfrak{A}_H & \longrightarrow & \mathfrak{A}_{\theta}, \\
     & \lambda & \longmapsto & \lambda\alpha.
\end{array}$$ If we fix $\mathcal{O}_K$-bases of $\mathfrak{A}_H$ and $\mathfrak{A}_{\theta}$ and call $M(\alpha)$ the matrix of $\psi_{\alpha}$ with respect to these bases, then $\alpha$ is an $\mathfrak{A}_H$-generator of $\mathcal{O}_L$ if and only if $\mathrm{det}(M(\alpha))\not\equiv0\,(\mathrm{mod}\,\mathfrak{p}_K)$.

\subsection{Scaffolds on degree $p$ extensions}\label{sectscaffolds}

The theory of scaffolds was originally introduced by Byott, Childs and Elder \cite{byottchildselder} as a unification of several works concerning the Galois module structure of the ring of integers. A scaffold on a totally ramified $H$-Galois extension $L/K$ of local fields is roughly speaking a finite set of elements $\Psi_i$ on $H$ acting on a family of elements $\lambda_k$ of $L$ with prescribed valuation in such a way that the valuations of the elements $\Psi_i\cdot\lambda_j$ can be determined up to a certain precision. The utility of this theory is the validity of some criteria on the module structure in terms of the precision of the scaffold. We present here the main notions and results restricted to the case of totally ramified degree $p$ extensions of $p$-adic fields.

Let $L/K$ be such an extension. Let $b\in\mathbb{Z}$ coprime with $p$. Call $\mathbb{S}_p=\{0,1,\dots,p-1\}$ and let $\mathfrak{b}\colon\mathbb{S}_{p}\longrightarrow\mathbb{Z}$ be defined by $\mathfrak{b}(s)=bs$. Since $b$ is coprime with $p$, the map $r\circ(-\mathfrak{b})$ is bijective, where $r\colon\mathbb{Z}\longrightarrow\mathbb{S}_p$ sends each integer number to its remainder mod $p$. Let $\mathfrak{a}\colon\mathbb{S}_p\longrightarrow\mathbb{S}_p$ be its inverse. The definition of scaffold in this case is the following (see \cite[Definition 2.3]{byottchildselder}):

\begin{defi}\label{defiscaffold} Let $A$ be a $K$-algebra of dimension $p$ acting $K$-linearly on $L$ and let $\mathfrak{c}\in\mathbb{Z}_{\geq1}$. An $A$-scaffold on $L$ with precision $\mathfrak{c}$ and shift parameter $b$ consists of the following data:
\begin{itemize}
    \item[1.] A family $\{\lambda_k\}_{k\in\mathbb{Z}}$ of elements in $L$ such that $v_L(\lambda_k)=k$ for all $k\in\mathbb{Z}$ and $\lambda_{k_1}\lambda_{k_2}^{-1}\in K$ if and only if $k_1\equiv k_2\,(\mathrm{mod}\,p)$.
    \item[2.] An element $\Psi\in A$ with $\Psi\cdot1=0$ and the property that for every $k\in\mathbb{Z}$ there is some $u_k\in\mathcal{O}_K^*$ such that:
    $$\begin{cases}
    \Psi\cdot\lambda_k\equiv u_k\lambda_{k+b}\,(\mathrm{mod}\,\lambda_{k+b}\mathfrak{p}_L^{\mathfrak{c}}) &\hbox{ if }\mathfrak{a}(k)\geq1 \\
    \Psi\cdot\lambda_k\equiv0\,(\mathrm{mod}\,\lambda_{k+b}\mathfrak{p}_L^{\mathfrak{c}}) &\hbox{ if }\mathfrak{a}(k)=0 \\
    \end{cases}$$
\end{itemize}
\end{defi}

In \cite{byottchildselder}, the module structure of the ideals $\mathfrak{p}_L^h$, where $h\geq0$, is studied. Since we are interested in the ring of integers $\mathcal{O}_L$, we just take $h=0$. Consequently, the parameters \cite[(3.3),(3.4)]{byottchildselder} become $$d(s)=\Big\lfloor\frac{\mathfrak{b}(s)+b}{p}\Big\rfloor,\quad w(s)=\mathrm{min}\{d(s+j)-d(j)\,|\,0\leq j\leq p-1-s\},$$ where $s\in\mathbb{S}_p$. Moreover, the result \cite[Theorem 3.1]{byottchildselder} relating the precision of the scaffold with the module structure of $\mathcal{O}_L$ restricted to this situation becomes:

\begin{teo}\label{criteriascaffold} Assume that $L/K$ admits an $A$-scaffold of precision $\mathfrak{c}$ and let $b\in\mathbb{S}_p$ be such that $\mathfrak{a}(b)=p-1$.
\begin{itemize}
    \item[(1)] If $\mathfrak{c}\geq\mathrm{max}(b,1)$ and $w(s)=d(s)$ for all $s\in\mathbb{S}_p$, then $\mathcal{O}_L$ is $\mathfrak{A}_H$-free.
    \item[(2)] If $\mathfrak{c}\geq p+b$, then $\mathcal{O}_L$ is $\mathfrak{A}_H$-free if and only if $w(s)=d(s)$ for all $s\in\mathbb{S}_p$.
\end{itemize}
\end{teo}

\subsection{Continued fractions and distance to the nearest integer}\label{sectcontfrac}

The criteria in Theorem \ref{cyclicpfreeness} (4) and Theorem \ref{maintheorem} (4) depend on the continued fraction expansion of the number $\frac{a}{p}$ for $0\leq a\leq p-1$. In this part we introduce the notation and some results which will be needed later on.

For a non-negative integer $x$ and an odd prime number $p$, we write $$\frac{x}{p}=[a_0;a_1,\dots,a_n]$$ for the continued fraction expansion of the rational number $\frac{x}{p}$. By definition, $a_0$ is just its integer part. If $a$ is the remainder of $x$ mod $p$, then $\frac{a}{p}=[0;a_1,\dots,a_n]$. Note that this is the situation in Theorem \ref{cyclicpfreeness} for $x=t$ and in Theorem \ref{maintheorem} for $x=\ell$. We will usually omit the explicit mention to the number $x$ and consider the continued fraction expansion of $\frac{a}{p}$.

For $i\geq0$, the $i$-th convergent of $\frac{x}{p}$ is the number defined by $$\frac{p_i}{q_i}=[a_0;a_1,\dots,a_i],$$ where the fraction is in irreducible form. It is easy to check that $\frac{p_i}{q_i}<\frac{x}{p}$ if $i$ is even and $\frac{p_i}{q_i}>\frac{x}{p}$ otherwise. We are especially interested in the denominators $q_i$. By definition, $q_0=1$, $q_1=a_1$ and $q_n=p$. Moreover, it is satisfied that $q_{i+2}=a_{i+2}q_{i+1}+q_i$ for every $i\geq2$.

The numerators $p_i$ satisfy an analogous recurrence relation. Now, let us denote $$q_{i,r_i}=r_iq_{i+1}+q_{i},\quad0\leq r_i\leq a_{i+2},$$ and define integer numbers $p_{i,r_i}$ similarly. The fractions $\frac{p_{i,r_i}}{q_{i,r_i}}$ are called semiconvergents or intermediate convergents of $\frac{\ell}{p}$ (see for instance \cite[I, \textsection 4]{lang1995}). It is immediate from the definition that $q_{i,0}=q_i$ and $q_{i,a_{i+2}}=q_{i+2}$.

For a real number $\alpha$, we denote by $\lfloor \alpha\rfloor$ the integer part and by $\widehat{\alpha}$ the fractional part, so that $\alpha=\lfloor\alpha\rfloor+\widehat{\alpha}$. On the other hand, if $2\alpha\notin\mathbb{Z}$, we write $||\alpha||$ for the distance of $\alpha$ to the nearest integer. Note that this restriction is not problematic for us because in our case $\alpha$ is a rational number with denominator $1$ or $p$ (in irreducible form). Note also that if $a$ is the remainder of $x$ mod $p$, then $||q\frac{x}{p}||=||q\frac{a}{p}||$ for every $q\in\mathbb{Z}$.

\begin{pro}\label{propcontfrac} Let $\alpha=\frac{a}{p}$ with $0<a<p$ and let $\frac{p_i}{q_i}$ be its $i$-th convergent.
\begin{itemize}
    \item[(1)] $||q_{i+1}\frac{a}{p}||<||q_i\frac{a}{p}||$ for every $0\leq i\leq n-1$.
    \item[(2)] If $q\in\mathbb{Z}$ and $q<q_i$, then $||q\frac{a}{p}||\geq||q_{i-1}\frac{a}{p}||$, and in particular, $||q\frac{a}{p}||>||q_i\frac{a}{p}||$.
    \item[(3)] If $i>0$ is even (resp. odd), then $\widehat{q_i\frac{a}{p}}<\frac{1}{2}$ (resp. $\widehat{q_i\frac{a}{p}}>\frac{1}{2}$).
    \item[(4)] $||q_{n-1}\frac{a}{p}||=\frac{1}{p}$, so $\widehat{q_{n-1}\frac{a}{p}}=\frac{1}{p}$ if $n$ is odd and $\widehat{q_{n-1}\frac{a}{p}}=\frac{p-1}{p}$ otherwise.
\end{itemize}
\end{pro}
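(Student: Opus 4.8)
The plan is to reduce all four statements to the behaviour of the signed errors $\theta_i\coloneqq q_i\frac{a}{p}-p_i$ of the convergents. Since $\frac{a}{p}=[0;a_1,\dots,a_n]$ is already reduced we have $p_n=a$, $q_n=p$, whence $\theta_n=0$; and the inequalities $\frac{p_i}{q_i}\lessgtr\frac{a}{p}$ recorded before the statement show that $\theta_i$ has sign $(-1)^i$. First I would derive from $p_i=a_ip_{i-1}+p_{i-2}$ and $q_i=a_iq_{i-1}+q_{i-2}$ the recurrence $\theta_i=a_i\theta_{i-1}+\theta_{i-2}$; rearranging it as $\theta_{i-2}=\theta_i-a_i\theta_{i-1}$ and using that $\theta_{i-2}$, $\theta_i$ and $-\theta_{i-1}$ all share the sign $(-1)^i$ gives the clean identity $|\theta_{i-2}|=|\theta_i|+a_i|\theta_{i-1}|$. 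Because every $a_i\geq1$ (and $a_n\geq2$ in the canonical expansion), this forces the strict chain $|\theta_0|>|\theta_1|>\dots>|\theta_{n-1}|>0$. Next I would record the standard bound $|\theta_i|<\frac{1}{q_{i+1}}\leq\frac12$ for $1\leq i\leq n-1$ (valid since $q_{i+1}\geq q_2\geq2$), which has two consequences used throughout: $p_i$ is the integer nearest to $q_i\frac{a}{p}$, so $||q_i\frac{a}{p}||=|\theta_i|$; and $\widehat{q_i\frac{a}{p}}$ equals $\theta_i$ when $\theta_i>0$ and $1-|\theta_i|$ when $\theta_i<0$.

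Given this, parts (3) and (4) are immediate. For (3), the sign $(-1)^i$ of $\theta_i$ together with the preceding remark yields $\widehat{q_i\frac{a}{p}}=\theta_i<\frac12$ when $i>0$ is even and $\widehat{q_i\frac{a}{p}}=1-|\theta_i|>\frac12$ when $i$ is odd. For (4), I would invoke the determinant identity $p_nq_{n-1}-p_{n-1}q_n=(-1)^{n-1}$ to compute $\theta_{n-1}=\frac{p_nq_{n-1}-p_{n-1}q_n}{q_n}=\frac{(-1)^{n-1}}{p}$, so that $||q_{n-1}\frac{a}{p}||=\frac1p$, and the sign then gives $\widehat{q_{n-1}\frac{a}{p}}=\frac1p$ for $n$ odd and $\frac{p-1}{p}$ for $n$ even. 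Part (1) follows for $1\leq i\leq n-1$ directly from $||q_i\frac{a}{p}||=|\theta_i|$ and the strict monotonicity of $|\theta_i|$. The only delicate index is $i=0$: here $||q_0\frac{a}{p}||=\min(\frac{a}{p},1-\frac{a}{p})$ need not equal $|\theta_0|=\frac{a}{p}$, and the inequality holds strictly precisely when $\frac{a}{p}<\frac12$ (so that $a_1\geq2$ and $q_1>q_0$), a boundary point I would treat separately.

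The substance of the proposition is part (2), the assertion that consecutive convergent denominators are the best approximations of the second kind. Given an integer $q$ with $0<q<q_i$, I would pick $p$ realizing $||q\frac{a}{p}||=|q\frac{a}{p}-p|$ and expand $(q,p)$ in the $\mathbb{Z}$-basis of $\mathbb{Z}^2$ formed by $(q_{i-1},p_{i-1})$ and $(q_i,p_i)$ — a basis precisely because $p_iq_{i-1}-p_{i-1}q_i=(-1)^{i-1}$ — writing $(q,p)=u\,(q_{i-1},p_{i-1})+v\,(q_i,p_i)$ with $u,v\in\mathbb{Z}$, so that $q\frac{a}{p}-p=u\theta_{i-1}+v\theta_i$. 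The crux, which I expect to be the main obstacle, is the sign analysis: from $0<q=uq_{i-1}+vq_i<q_i$ one must show that $u\neq0$ and that $u$ and $v$ have opposite signs (or $v=0$). Once this is in place, $\theta_{i-1}$ and $\theta_i$ having opposite signs makes the two summands $u\theta_{i-1}$ and $v\theta_i$ share a sign, so that no cancellation occurs and $|q\frac{a}{p}-p|=|u|\,|\theta_{i-1}|+|v|\,|\theta_i|\geq|\theta_{i-1}|=||q_{i-1}\frac{a}{p}||$. This establishes $||q\frac{a}{p}||\geq||q_{i-1}\frac{a}{p}||$, and combining it with the strict monotonicity from part (1) gives the final clause $||q\frac{a}{p}||>||q_i\frac{a}{p}||$.
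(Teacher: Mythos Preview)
Your argument is correct and, in fact, more informative than the paper's own proof, which simply cites Lang's \emph{Introduction to Diophantine Approximations}: parts (1) and (2) are quoted verbatim from theorems there, part (3) is read off from the closed formula $q_i\alpha-p_i=\frac{(-1)^ia_{i+2}}{a_{i+2}q_{i+1}+q_i}$ (a corollary in Lang), and part (4) from the bound $|q_{n-1}\alpha-p_{n-1}|<\frac{1}{q_n}$. Your route instead builds everything from the single recurrence $|\theta_{i-2}|=|\theta_i|+a_i|\theta_{i-1}|$ and the determinant identity $p_iq_{i-1}-p_{i-1}q_i=(-1)^{i-1}$, which makes the proof self-contained and transparent about where strictness comes from (namely $a_n\geq2$ and $\theta_i\neq0$ for $i<n$). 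Your treatment of (2) via the $\mathbb{Z}$-basis $\bigl((q_{i-1},p_{i-1}),(q_i,p_i)\bigr)$ is exactly the classical ``best approximation of the second kind'' proof that Lang records; the sign analysis you flag as the crux is routine once one notes that $u=0$ forces $q=vq_i\notin(0,q_i)$, and that $u,v$ of the same nonzero sign forces $|q|\geq q_i$. Finally, you are right to isolate the boundary case $i=0$ in part (1): when $a_1=1$ one has $q_0=q_1=1$ and the inequality degenerates to an equality, so the statement as written is slightly imprecise there. The paper does not address this (it simply defers to Lang), but the applications later in the paper never rely on the case $i=0$, so the issue is harmless.
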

\begin{proof}
The first statement is proved in \cite[I, \textsection 2, Corollary of Theorem 5]{lang1995}. The second one is equivalent to the statement that $q_i$ is the smallest integer $q>q_{i-1}$ such that  $||q\frac{a}{p}||<||q_{i-1}\frac{a}{p}||$, which is just \cite[Theorem 6]{lang1995}. For the third part, we note that from \cite[I, \textsection 1, Corollary 3 of Theorem 3]{lang1995} we have that $$q_i\frac{x}{p}-p_i=\frac{(-1)^ia_{i+2}}{a_{i+2}q_{i+1}+q_i}.$$ Again by \cite[Theorem 6]{lang1995}, $\frac{p_i}{q_i}$ is a best approximation to $\frac{x}{p}$ (see \cite[I, \textsection 2, paragraph before Theorem 6]{lang1995}), and in particular $|q_i\frac{x}{p}-p_i|=||q_i\frac{a}{p}||$. Hence, $$\Big|\Big|q_i\frac{a}{p}\Big|\Big|=\Big|\Big|\frac{(-1)^ia_{i+2}}{a_{i+2}q_{i+1}+q_i}\Big|\Big|.$$ Since $a_{i+2}q_{i+1}+q_i>2a_{i+2}$, (3) follows. To prove (4), we just note that \cite[I, \textsection 2, Corollary 1 of Theorem 2]{lang1995} gives that $|q_{n-1}\frac{a}{p}-p_{n-1}|=\frac{1}{p}$.
\end{proof}

\section{Degree $p$ extensions with dihedral normal closure}\label{sectinfdegp}

Let $L/K$ be a degree $p$ extension of $p$-adic fields whose normal closure $\widetilde{L}$ satisfies $G\coloneqq\mathrm{Gal}(L/K)\cong D_p$.

\subsection{The unique Hopf-Galois structure}\label{secthgstr}

Let us establish the presentation \begin{equation}\label{presentG}
    G=\langle\sigma,\tau\,|\,\sigma^p=\tau^2=1,\,\tau\sigma=\sigma^{-1}\tau\rangle.
\end{equation} Since $p$ is a Burnside number, we can apply \cite[Theorem 2]{byottuniqueness} to obtain that $L/K$ admits a unique Hopf-Galois structure $H$. We know that $H=\widetilde{L}[N]^G$, where $N$ is the unique regular subgroup of $\mathrm{Perm}(G/G')$ which is stable under the action of $G$. It can be easily checked that the only permutation subgroup satisfying these conditions is $N=\langle\lambda(\sigma)\rangle$, where $\lambda\colon G\longrightarrow\mathrm{Perm}(G/G')$ is the left translation map. The action of $H$ on $L$ is as follows: if $h=\sum_{i=1}^ph_i\eta_i\in H$ with $h_i\in\widetilde{L}$, $\eta_i\in N$ and $x\in L$, then $$h\cdot x=\sum_{i=1}^ph_i\eta_i^{-1}(\overline{1})(x).$$

On the other hand, since $G$ is the dihedral group of $2p$ elements, it has a unique order $p$ subgroup, which by means of the Galois correspondence yields a unique quadratic subextension $M/K$ of $\widetilde{L}/K$. Hence there is some $z\in \widetilde{L}$ such that $z^2\in K$, or equivalently, $\tau(z)=-z$. Then, the signs of both $\lambda(\sigma)-\lambda(\sigma)^{-1}$ and $z$ are switched by the action of $G/\mathrm{Gal}(L/M)$, and so their product is fixed and hence it belongs to $H$. Since $\lambda(\sigma)$ acts as $\sigma^{-1}$, we can identify $\lambda(\sigma)$ and $\sigma$, which is the same as taking the negative of $\lambda(\sigma)-\lambda(\sigma)^{-1}$. Hence, we consider the element $$w\coloneqq z(\sigma-\sigma^{-1}),$$ which belongs to $H$. Actually, it can be seen that its powers up to $p-1$ are $K$-linearly independent, and $H$ has dimension $p$. Then, $H$ is generated by $w$ as a $K$-algebra, that is, $H=K[w]$. In fact, $w$ generates $\mathcal{O}_{\widetilde{L}}[N]^G$ as an $\mathcal{O}_K$-algebra, and this can be understood as an analogue of \cite[Proposition 6.8. (1)]{delcorsoferrilombardo}.

\begin{rmk}\normalfont\label{rmkdescrH} Following the terminology of \cite{elder}, in the case of a typical extension (a separable totally ramified degree $p$ extension of local fields which is not generated by a $p$-th root of a uniformizer), our description of a generator fits with the one in \cite[Theorem 3.1]{elder}, namely it coincides up to multiplication by a unit of $K$.
\end{rmk}

Since $H=K[w]$ is of dimension $p$, we know that the powers of $w$ up to $p-1$ form a $K$-basis of $H$. In the following, we determine the coordinates of $w^p$ with respect to that basis. This can be seen as an analogue of \cite[Proposition 6.8. (2)]{delcorsoferrilombardo}.

\begin{pro}\label{w-poly} With the notation as above, we have
$$w^p+c_1z^2w^{p-2}+c_2z^4w^{p-4}+\dots+c_{\frac{p-1}{2}}z^{p-1}w=0,$$ where $$c_j=\frac{p}{j}\binom{p-j-1}{j-1}=\binom{p-j}{j}+\binom{p-j-1}{j-1}.$$ In particular, we have that $c_j\in\mathbb{Z}_{>0}$ and $v_p(c_j)=1$ for every $1\leq j\leq\frac{p-1}{2}$.
\end{pro}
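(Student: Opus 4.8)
The plan is to reduce the statement to a single one–variable polynomial identity for $\sigma-\sigma^{-1}$ inside the group algebra and then read off the coefficients from the classical Dickson polynomials.

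First I would use the description $H=\widetilde{L}[N]^G$ with $N=\langle\sigma\rangle$ from the previous subsection. In the group algebra $\widetilde{L}[N]$ the element $z$ is a coefficient, hence central, so it commutes with $\sigma-\sigma^{-1}$; therefore $w^p=z^p(\sigma-\sigma^{-1})^p$, and likewise $z^p(\sigma-\sigma^{-1})^{p-2j}=z^{2j}\bigl(z(\sigma-\sigma^{-1})\bigr)^{p-2j}=z^{2j}w^{p-2j}$. Thus the claimed formula is equivalent to the relation
\[
(\sigma-\sigma^{-1})^p=-\sum_{j=1}^{\frac{p-1}{2}}c_j\,(\sigma-\sigma^{-1})^{p-2j}
\]
inside $\widetilde{L}[N]$, where the only fact about $\sigma$ that enters is $\sigma^p=1$.

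Next I would introduce the polynomials $E_n(X)$ defined by $E_0=2$, $E_1=X$ and $E_{n+1}=XE_n+E_{n-1}$. An easy induction on $n$ shows that $E_n(y-y^{-1})=y^n+(-1)^ny^{-n}$ for an indeterminate $y$; in particular, since $p$ is odd, $E_p(y-y^{-1})=y^p-y^{-p}$. Specializing $y=\sigma$ and using $\sigma^p=\sigma^{-p}=1$ gives $E_p(\sigma-\sigma^{-1})=0$. As $E_p$ is monic of degree $p$ with lowest term $X^1$ and contains only powers $X^{p-2j}$, this is exactly the displayed relation, provided the coefficient of $X^{p-2j}$ in $E_p$ equals $c_j$. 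To identify these coefficients I would prove by induction, via the recursion, the closed form
\[
E_n(X)=\sum_{k=0}^{\lfloor n/2\rfloor}\frac{n}{n-k}\binom{n-k}{k}X^{n-2k},
\]
which reduces to the Pascal–type identity $\frac{n+1}{n+1-k}\binom{n+1-k}{k}=\frac{n}{n-k}\binom{n-k}{k}+\frac{n-1}{n-k}\binom{n-k}{k-1}$. Evaluating at $n=p$, $k=j$ yields the coefficient $\frac{p}{p-j}\binom{p-j}{j}$, and a direct manipulation of binomials gives $\frac{p}{p-j}\binom{p-j}{j}=\frac{p}{j}\binom{p-j-1}{j-1}=\binom{p-j}{j}+\binom{p-j-1}{j-1}$, matching all three expressions for $c_j$ in the statement.

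Finally I would settle the arithmetic assertions. Since $1\le j\le\frac{p-1}{2}$ forces $p-j\ge j$ and $p-j-1\ge j-1$, the expression $c_j=\binom{p-j}{j}+\binom{p-j-1}{j-1}$ exhibits $c_j$ as a sum of two positive binomial coefficients, so $c_j\in\mathbb{Z}_{>0}$; on the other hand $c_j=\frac{p}{j}\binom{p-j-1}{j-1}$ with $j<p$ and $\binom{p-j-1}{j-1}$ built only from integers strictly below $p$ (hence prime to $p$) gives $v_p(c_j)=1$. The conceptual steps here (centrality of $z$, and $\sigma^p=1\Rightarrow E_p(\sigma-\sigma^{-1})=0$) are formal; the one genuinely technical point, and the step I expect to be the main obstacle, is the inductive verification of the Dickson closed form, i.e.\ the Pascal–type binomial identity above, which is routine but a little delicate to set up with the correct index shifts.
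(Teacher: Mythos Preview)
Your reduction to the identity $\sum_{j=0}^{(p-1)/2}c_j(\sigma-\sigma^{-1})^{p-2j}=0$ via the centrality of $z$ is exactly the same first step the paper takes, but from there the two arguments diverge. The paper expands $(\sigma-\sigma^{-1})^{2i+1}$ as $\sum_{j}a_{ij}(\sigma^{2j+1}-\sigma^{-(2j+1)})$, inverts this triangular system to get coefficients $b_{jk}$ with $\sigma^{2j+1}-\sigma^{-(2j+1)}=\sum_k b_{jk}(\sigma-\sigma^{-1})^{2k+1}$, specialises $j=\frac{p-1}{2}$ so the left side vanishes, and then identifies $b_{\frac{p-1}{2},k}=c_{\frac{p-1}{2}-k}$ by a generating-function computation showing that the candidate $b_{jk}=\binom{j+k+1}{2k+1}+\binom{j+k}{2k+1}$ satisfies the inverse relation $\sum_j a_{ij}b_{jk}=\delta_{ik}$. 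Your route instead recognises that the unique monic polynomial $E_p$ with $E_p(y-y^{-1})=y^p-y^{-p}$ is the Dickson polynomial $D_p(X,-1)$, obtains it from the three-term recursion $E_{n+1}=XE_n+E_{n-1}$, and reads off the closed form $\frac{p}{p-j}\binom{p-j}{j}$ directly. Both are correct; your argument is shorter and more conceptual, and the binomial manipulations you flag as ``the main obstacle'' are no harder than the generating-function step in the paper. The only cosmetic point: your $E_0=2$ makes the closed form degenerate at $n=0$, but you never use that case, so this causes no trouble.
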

\begin{proof}
The two expressions for $c_j$ are equal since \begin{equation*}
    \begin{split}
        \binom{p-j}{j}+\binom{p-j-1}{j-1}&=\frac{(p-j)!}{j!(p-2j)!}+\frac{(p-j-1)!}{(j-1)!(p-2j)!}\\&=\left(\frac{p-j}{j}+1\right)\frac{(p-j-1)!}{(j-1)!(p-2j)!}\\&=\frac{p}{j}\binom{p-j-1}{j-1},
    \end{split}
\end{equation*} and from the first one it follows that $c_j$ is divisible by $p$ but not by $p^2$. By convention, we call $c_0=1$. Then the stated equality is equivalent to \begin{equation}\label{eqredpoly}
    \sum_{j=0}^{\frac{p-1}{2}}c_j(\sigma-\sigma^{-1})^{p-2j}=0.
\end{equation} 

Let us call $g=\sigma-\sigma^{-1}$. It is easily checked that \begin{equation}\label{eqdiff}
    \sigma^{2i+1}-\sigma^{-(2i+1)}=(\sigma^{2i-1}-\sigma^{-(2i-1)})(\sigma^2+\sigma^{-2})-(\sigma^{2i-3}-\sigma^{-(2i-3)}),
\end{equation}  and $\sigma^2+\sigma^{-2}=g^2+2$. Recursively, we see that $\sigma^{2i+1}-\sigma^{-(2i+1)}$ can be written as a polynomial in $g$ where the coefficients of even powers vanish. That is, for each $0\leq j\leq i$ there is $s_j^{(2i+1)}\in\mathbb{Z}_{\geq0}$, such that $$\sigma^{2i+1}-\sigma^{-(2i+1)}=\sum_{j=0}^is_j^{(2i+1)}g^{2i+1-2j}.$$ We shall prove by induction on $1\leq i\leq\frac{p-1}{2}$ that $s_0^{(2i+1)}=1$ and $s_j^{(2i+1)}=\binom{2i+1-j}{j}+\binom{2i-j}{j-1}$ for every $1\leq j\leq i$, which for $i=\frac{p-1}{2}$ implies the result. For $i=1$, we see that $\sigma^3-\sigma^{-3}=g^3+3g$, so $s_0^{(3)}=1$ and $s_1^{(3)}=3$, which satisfy the required formula. Now, suppose that our claim holds for $i$, and let us prove it for $i+1$. Using the expression \eqref{eqdiff}, we find the relations $$\begin{cases}
s_0^{(2i+3)}=s_0^{(2i+1)}, \\
s_1^{(2i+3)}=2s_0^{(2i+1)}+s_1^{(2i+1)}, \\
s_j^{(2i+3)}=2s_{j-1}^{(2i+1)}+s_j^{(2i+1)}-s_{j-2}^{(2i-1)},\quad2\leq j\leq i, \\
s_{i+1}^{(2i+3)}=2s_{i}^{(2i+1)}-s_{i-1}^{(2i-1)}.
\end{cases}$$ By the induction hypothesis, we have that $s_0^{(2i+3)}=1$ and $s_1^{(2i+3)}=s_{i+1}^{(2i+3)}=2i+3$, obtaining the required formula. For the third relation, we use repeteadly the property that $\binom{n}{k}+\binom{n}{k-1}=\binom{n+1}{k}$ for non-negative integers $k\leq n$ to calculate: \begin{equation*}
    \begin{split}
        s_j^{(2i+3)}&=2\binom{2i+2-j}{j-1}+2\binom{2i+1-j}{j-2}+\binom{2i+1-j}{j}+\binom{2i-j}{j-1}-\binom{2i+1-j}{j-2}-\binom{2i-j}{j-3}\\&=2\binom{2i+2-j}{j-1}+\binom{2i+1-j}{j-2}+\binom{2i+1-j}{j}+\binom{2i-j}{j-1}-\binom{2i-j}{j-3}\\&=2\binom{2i+2-j}{j-1}+\binom{2i-j}{j-2}+\binom{2i+1-j}{j}+\binom{2i-j}{j-1}\\&=2\binom{2i+2-j}{j-1}+\binom{2i+1-j}{j}+\binom{2i+1-j}{j-1}\\&=2\binom{2i+2-j}{j-1}+\binom{2i+2-j}{j}=\binom{2i+3-j}{j}+\binom{2i+2-j}{j-1}.
    \end{split}
\end{equation*}
\end{proof}

\begin{rmk}\normalfont Let $\zeta_p$ be a primitive $p$-th root of unity in $\mathbb{C}$. In the previous proof, we have essentially determined the minimal polynomial of $\zeta_p-\zeta_p^{-1}=2i\mathrm{sin}(\frac{2\pi}{p})$ over $\mathbb{Q}$ (where now $i$ denotes the imaginary unit). Indeed, since $\sigma^p=\mathrm{Id}$, identifying $\sigma$ with $\zeta_p$ defines an isomorphism $\mathbb{Q}[\sigma]\cong\mathbb{Q}(\zeta_p)$ as rings.
\end{rmk}

We will refer to $$f(x)=x^p+\sum_{j=1}^{\frac{p-1}{2}}c_jz^{2j}w^{p-2j}$$ as the minimal polynomial of $w$ over $K$, in the sense that it is the monic polynomial $f\in K[x]$ of minimal degree such that $f(w)$ is the identically zero endomorphism of $L$.

\begin{example}\normalfont\label{examplepolymin}The coefficients $c_j$ in the expression given by Proposition \ref{w-poly} are obtained directly as soon as we choose a value for $p$. However, the determination of an element $z\in\widetilde{L}$ such that $z^2\in K$ is not an easy task. If $K=\mathbb{Q}_p$, it is known that the quadratic extensions are $\mathbb{Q}_p(\sqrt{p})$, $\mathbb{Q}_p(\sqrt{np})$ and $\mathbb{Q}_p(\sqrt{n})$, where $n$ is a non-quadratic residue mod $p$. In \cite[Sections 4.4 and 4.5]{gilthesis}, a suitable $z$ is found for the cases $K=\mathbb{Q}_3$ and $K=\mathbb{Q}_5$ by studying directly the factorization of a generating polynomial of $L/K$ in $L[x]$. For $p=3$, the degree $3$ extensions of $\mathbb{Q}_3$ with dihedral degree $6$ normal closure are the ones generated by the polynomials $$x^3+3,\quad x^3+12,\quad x^3+21,$$ $$x^3+3x^2+3,\quad x^3+3x+3,\quad x^3+6x+3$$ (see \cite[Theorem 5.2]{awtreyedwards}). These correspond respectively to \cite[\href{https://www.lmfdb.org/LocalNumberField/3.3.5.1}{$p$-adic field 3.3.5.1}, \href{https://www.lmfdb.org/LocalNumberField/3.3.5.3}{$p$-adic field 3.3.5.3}, \href{https://www.lmfdb.org/LocalNumberField/3.3.5.2}{$p$-adic field 3.3.5.2}, \href{https://www.lmfdb.org/LocalNumberField/3.3.4.4}{$p$-adic field 3.3.4.4}, \href{https://www.lmfdb.org/LocalNumberField/3.3.3.2}{$p$-adic field 3.3.3.2}, \href{https://www.lmfdb.org/LocalNumberField/3.3.3.1}{$p$-adic field 3.3.3.1}]{lmfdb} in the LMFDB database. The minimal polynomial for these extensions are shown in the following table.

\begin{center}
\begin{tabular}{|l|c|c|}
    \hline
     & $z$ & Minimal polynomial of $w$ \\ \hline
    $x^3+3a$, $a\in\{1,4,7\}$ & $\sqrt{-3}$ & $x^3-9x$ \\ \hline
    $x^3+3x^2+3$ & $\sqrt{-1}$ & $x^3-3x$ \\ \hline
    $x^3+3x+3$ & $\sqrt{-3}$ & $x^3-9x$ \\ \hline
    $x^3+6x+3$ & $\sqrt{3}$ & $x^3+9x$ \\ \hline
\end{tabular}
\end{center}

\end{example}

Later on, it will be useful to know the valuations of coordinates of the powers of $w$ from $p$ to $2p-2$. To that end, we prove:

\begin{pro}\label{w-highpoly} Assume that $z$ is a uniformizer of $M$. Let $1\leq m\leq p-1$. We have that:
\begin{itemize}
    \item[(1)] If $m=2m'-1$ is odd, then there are $r_1^{(m)},r_3^{(m)},\dots,r_{m-2}^{(m)}\in\mathcal{O}_K$ and $r_m^{(m)},r_{m+2}^{(m)},\dots,r_{p-2}^{(m)}\in\mathcal{O}_K^*$ such that \begin{equation*}\label{eqhighpoly1}
        \begin{split}
            w^{p-1+m}&=\sum_{j=1}^{\frac{p+1}{2}-m'}\pi_K^{e+m'+j-1}r_{p-2j}^{(m)}w^{p-2j}+\sum_{j=\frac{p+1}{2}-m'+1}^{\frac{p-1}{2}}\pi_K^{2e+m'+j-1}r_{p-2j}^{(m)}w^{p-2j},
        \end{split}
    \end{equation*} where the second sum only appears if $m>1$.
    \item[(2)] If $m=2m'$ is even then there are $r_2^{(m)},r_4^{(m)},\dots,r_{m-2}^{(m)}\in\mathcal{O}_K$ and $r_m^{(m)},r_{m+2}^{(m)},\dots,r_{p-1}^{(m)}\in\mathcal{O}_K^*$ such that \begin{equation*}\label{eqhighpoly2}
        \begin{split}
            w^{p-1+m}&=\sum_{j=1}^{\frac{p+1}{2}-m'}\pi_K^{e+m'+j-1}r_{p-2j+1}^{(m)}w^{p-2j+1}+\sum_{j=\frac{p+1}{2}-m'+1}^{\frac{p-1}{2}}\pi_K^{2e+m'+j-1}r_{p-2j+1}^{(m)}w^{p-2j+1},
        \end{split}
    \end{equation*} where the second sum only appears if $m>2$.
\end{itemize}
\end{pro}
\begin{proof}
Note that if $z$ is a square root of a uniformizer of $K$, then it is a uniformizer of $M$ such that $z^2\in K$ and $\tau(z)=-z$. Thus, we can actually choose $z$ as in the statement.

The second statement follows immediately from multiplying the equality in the first one by $w$, with $r_i^{(m)}=r_{i-1}^{(m-1)}$ for $i\in\{2,4,\dots,p-1\}$. Then, we only need to prove (1). We proceed by induction on $m'$. If $m'=1$ (so that $m=1$), we have by Proposition \ref{w-poly} that $$w^p=-\Big(\sum_{j=1}^{\frac{p-1}{2}}c_jz^{2j}w^{p-2j}\Big).$$ Since $z$ is a uniformizer of $M$, the valuation of the coefficient of $w^{p-2j}$ is $$v_K(c_jz^{2j})=e+j$$ for every $1\leq j\leq\frac{p-1}{2}$. Then there are $r_1^{(1)},r_3^{(1)},\dots,r_{p-2}^{(1)}\in\mathcal{O}_K^*$ such that $$w^p=\pi_K^{e+1}r_{p-2}^{(1)}w^{p-2}+\pi_K^{e+2}r_{p-4}^{(1)}w^{p-4}+\dots+\pi_K^{e+\frac{p-1}{2}}r_1^{(1)}w=\sum_{j=1}^{\frac{p-1}{2}}\pi_K^{e+j}r_{p-2j}^{(1)}w^{p-2j}.$$ Assume that the result holds for $m=2m'-1$ for $1\leq m'<\frac{p-1}{2}$ and let us prove it for $m+2=2m'+1$. From the hypothesis, we have that there are $r_1^{(m)},r_3^{(m)},\dots,r_{m-2}^{(m)}\in\mathcal{O}_K$ and $r_m^{(m)},r_{m+2}^{(m)},\dots,r_{p-2}^{(m)}\in\mathcal{O}_K^*$ such that $$w^{p-1+m}=\sum_{j=1}^{\frac{p+1}{2}-m'}\pi_K^{e+m'+j-1}r_{p-2j}^{(m)}w^{p-2j}+\sum_{j=\frac{p+1}{2}-m'+1}^{\frac{p-1}{2}}\pi_K^{2e+m'+j-1}r_{p-2j}^{(m)}w^{p-2j}.$$ Multiplying by $w^2$ at both sides, we obtain \begin{equation}\label{eqhypind}
    \begin{split}
        w^{p-1+m+2}&=\pi_K^{e+m'}r_{p-2}^{(m)}w^p+\pi_K^{e+m'+1}r_{p-4}^{(m)}w^{p-2}+\dots+\pi_K^{e+\frac{p-1}{2}}r_m^{(m)}w^{m+2}\\&+\pi_K^{2e+\frac{p+1}{2}}r_{m-2}^{(m)}w^m+\pi_K^{2e+\frac{p+3}{2}}r_{m-4}^{(m)}w^{m-2}+\dots+\pi_K^{2e+m'+\frac{p-3}{2}}r_1^{(m)}w^3
    \end{split}
\end{equation} From the case $m'=1$ we know the explicit expression of $w^p$. Replacing it in \eqref{eqhypind}, we are able to determine the coefficient of $w^{p-2j}$ for each $1\leq j\leq\frac{p-1}{2}$:
\begin{itemize}
    \item For $j\in\{1,2,\dots,\frac{p-1}{2}-m'-1\}$, the coefficient of $w^{p-2j}$ is $$\pi_K^{2e+m'+j}r_{p-2}^{(m)}r_{p-2j}^{(1)}+\pi_K^{e+m'+j}r_{p-2j}^{(m)}.$$ Since $r_{p-2j}^{(m)}\in\mathcal{O}_K^*$, the $K$-valuation of this term is exactly $e+m'+j$. Then there is some $r_{p-2j}^{(m+2)}\in\mathcal{O}_K^*$ such that the coefficient above is equal to $\pi_K^{e+(m'+1)+j-1}r_{p-2j}^{(m+2)}$.
    \item For each $j\in\{\frac{p-1}{2}-m',\dots,\frac{p-5}{2},\frac{p-3}{2}\}$ the coefficient of $w^{p-2j}$ is $$\pi_K^{2e+m'+j}r_{p-2}^{(m)}r_{p-2j}^{(1)}+\pi_K^{2e+m'+j}r_{p-2j}^{(m)},$$ whose $K$-valuation is lower bounded by $2e+m'+j$. Therefore, there is some $r_{p-2j}^{(m+2)}\in\mathcal{O}_K$ such that the coefficient above is equal to $\pi_K^{2e+(m'+1)+j-1}r_{p-2j}^{(m+2)}$.
    \item Finally, for $j=\frac{p-1}{2}$, we have $w^{p-2j}=w$ and its coefficient is $r_1^{(m+2)}\pi_K^{2e+(m'+1)+\frac{p-1}{2}-1}$, where $r_1^{(m+2)}=r_{p-2}^{(m)}r_1^{(1)}\in\mathcal{O}_K^*$.
\end{itemize}
\end{proof}

\subsection{The arithmetic of $L/K$}\label{sectarithm}

In this part, we assume the notation of Section \ref{prelimpadic} with $E=\widetilde{L}$. As discussed in Section \ref{secthgstr}, $\widetilde{L}/K$ has a unique quadratic subextension $M/K$. Note that the extension $\widetilde{L}/M$ is cyclic of degree $p$, and hence the $\mathfrak{A}_{\widetilde{L}/M}$-module structure of $\mathcal{O}_{\widetilde{L}}$ is completely determined by Theorem \ref{cyclicpfreeness}. The absolute ramification index of $M$ is $e$ if $M/K$ is unramified and $2e$ otherwise.

It is a standard fact that any unramified extension is Galois. Therefore, a degree $p$ extension $L/K$ of $p$-adic fields with dihedral normal closure $\widetilde{L}$ must be totally ramified. Hence, $\widetilde{L}/K$ is totally ramified if and only if so is $M/K$. For the rest of this section, we assume that this is the case. 

The ramification jump of the extension $\widetilde{L}/K$ satisfies $$1\leq t\leq\frac{2pe}{p-1}.$$ From \cite[Chapter IV, Proposition 2]{serre} we deduce that $t$ is also the unique ramification jump of the cyclic degree $p$ extension $\widetilde{L}/M$. Then, $t=\frac{2pe}{p-1}$ if and only if $t$ is divisible by $p$.

From \cite[Chapter IV, Proposition 7]{serre} we have monomorphisms $$\theta_0\colon G_0/G_1\longrightarrow k^*,\quad\theta_t\colon G_t\longrightarrow k,$$ where $k$ is the residue field of $K$. Now, \cite[Chapter IV, Proposition 9]{serre} gives that $$\theta_t(\tau\sigma\tau^{-1})=\theta_0(\tau)^t\theta_t(\sigma).$$ Since $\tau\sigma\tau^{-1}=\sigma^{-1}$, we conclude that $t$ is odd. Therefore, $$\ell\coloneqq\frac{p+t}{2}$$ is an integer number. 

Actually, it is known that there always exists a dihedral degree $2p$ extension of $p$-adic fields with given ramification parameters.

\begin{pro}\label{progivenram} Let $M/K$ be a quadratic extension of $p$-adic fields. Assume that $M$ contains the primitive $p$-th roots of unity and that $K$ does not. Let $e_0$ be the ramification index of $M/\mathbb{Q}_p$ and let $t$ be an integer satisfying one of the following conditions:
\begin{itemize}
    \item[(a)] $t=\frac{pe_0}{p-1}$.
    \item[(b)] $1\leq t<\frac{pe_0}{p-1}$, $t$ is coprime with $p$ and if $M/K$ is ramified, $t$ is odd.
\end{itemize}
Then there is some field extension $E$ of $M$ such that $E/K$ is a dihedral degree $2p$ extension and its ramification jump is $t$.
\end{pro}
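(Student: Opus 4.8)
The plan is to realize $E$ as a Kummer extension of $M$ and to exploit the fact that the relevant $p$-th radicands can be taken in $K^{*}$ itself. First I would observe that, since $K$ does not contain the $p$-th roots of unity while $M$ does and $[M:K]=2$, necessarily $M=K(\zeta_{p})$ for a primitive $p$-th root of unity $\zeta_{p}$; writing $\mathrm{Gal}(M/K)=\langle\tau\rangle$, the mod-$p$ cyclotomic character is the unique character of order two, so $\tau(\zeta_{p})=\zeta_{p}^{-1}$. The key reduction is then the following: for any $\beta\in K^{*}$ which is not a $p$-th power, the field $E\coloneqq M(\sqrt[p]{\beta})=K(\zeta_{p},\sqrt[p]{\beta})$ is the splitting field over $K$ of $x^{p}-\beta$, hence Galois over $K$; moreover $K^{*}/(K^{*})^{p}\hookrightarrow M^{*}/(M^{*})^{p}$ because $[M:K]=2$ is prime to $p$, so $[E:M]=p$ and $[E:K]=2p$. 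Since $\tau$ acts on $\mathrm{Gal}(E/M)=\langle\sigma\rangle$ through the cyclotomic character $-1$, the group $\mathrm{Gal}(E/K)$ is the semidirect product $C_{p}\rtimes_{-1}C_{2}$, that is, $D_{p}$. Thus every $\beta\in K^{*}\setminus(K^{*})^{p}$ already produces a dihedral degree $2p$ extension, and the whole problem collapses to choosing $\beta$ so that the ramification jump of $E/K$ equals $t$.

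To control the jump I would invoke the classical dictionary between Kummer generators and ramification for a cyclic degree $p$ extension $M(\sqrt[p]{\beta})/M$ of a local field containing $\zeta_{p}$, recalling that by \cite[Chapter IV, Proposition 2]{serre} this jump coincides with the jump $t$ of $E/K$. Writing $e_{0}=e(M/\mathbb{Q}_p)$, the dictionary reads: if $v_{M}(\beta)\not\equiv0\pmod p$ the extension is maximally ramified with jump $\frac{pe_{0}}{p-1}$, while if $\beta$ is a unit lying in $U_{M}^{(m)}\setminus U_{M}^{(m+1)}(M^{*})^{p}$ with $0<m<\frac{pe_{0}}{p-1}$ and $p\nmid m$, the jump equals $\frac{pe_{0}}{p-1}-m$. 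For case (a) I would simply take $\beta=\pi_{K}$: then $v_{M}(\beta)=e(M/K)\in\{1,2\}$ is prime to $p$, giving the maximal jump $\frac{pe_{0}}{p-1}$.

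For case (b) I would reverse-engineer the unit level $m=\frac{pe_{0}}{p-1}-t$ and realize it by a unit of $K$, the behaviour splitting according to whether $M/K$ is unramified or ramified. When $M/K$ is unramified a unit of $K$ of filtration level $i$ keeps level $i$ in $M$, so I would choose $\beta\in U_{K}^{(m)}$ representing that level; such a unit exists because $\zeta_{p}\notin K$ forces every integer $i$ with $1\le i<\frac{pe_{0}}{p-1}$ and $p\nmid i$ to be a genuine jump of $U_{K}^{(1)}/(U_{K}^{(1)})^{p}$, and $p\nmid m$ holds since $\frac{pe_{0}}{p-1}\equiv0\pmod p$ while $p\nmid t$. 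When $M/K$ is ramified (tamely, as $p$ is odd) a uniformizer may be chosen with $\tau(\pi_{M})\equiv-\pi_{M}$, whence $\tau$ acts on the graded piece $U_{M}^{(j)}/U_{M}^{(j+1)}$ by $(-1)^{j}$ and a unit of $K$ of level $i$ acquires level $2i$ in $M$. This is precisely the source of the parity hypothesis: in the ramified case $\frac{pe_{0}}{p-1}$ is odd, so for odd $t$ the target level $m=\frac{pe_{0}}{p-1}-t$ is even, and I would realize it by a unit of $K$ of level $m/2$, the $\tau$-fixed (dihedral) classes being exactly those at even $M$-levels.

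The main obstacle is this ramification dictionary together with the bookkeeping of filtration levels. Concretely, I expect the delicate points to be: justifying that every admissible level is actually attained inside $U_{K}^{(1)}/(U_{K}^{(1)})^{p}$ (which rests on the structure of the unit group of $K$ and on $\zeta_{p}\notin K$, so that no exceptional top-level jump occurs below $\frac{pe_{0}}{p-1}$); and, in the ramified case, pinning down the level-doubling $i\mapsto 2i$ and the oddness of $\frac{pe_{0}}{p-1}$, since these encode the parity condition in (b) and their failure would wrongly produce cyclic rather than dihedral extensions. Once the dictionary and these level computations are in place, the construction of $\beta$ and the verification that $\beta\notin(K^{*})^{p}$ (guaranteed by $m>0$, i.e.\ by $t<\frac{pe_{0}}{p-1}$) are routine.
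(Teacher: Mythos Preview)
The paper does not supply its own proof of this proposition; it simply cites \cite[Chapitre III, Lemme p.50]{fertonthesis}. Your Kummer-theoretic approach is the natural one and is almost certainly what Ferton does as well: realize $E$ as $M(\sqrt[p]{\beta})$ with $\beta\in K^{*}$, so that the cyclotomic character $\tau(\zeta_p)=\zeta_p^{-1}$ forces $\mathrm{Gal}(E/K)\cong D_p$, and then choose the Kummer radicand $\beta$ to hit the prescribed jump via the standard dictionary between the $U_M$-filtration level of $\beta$ and the ramification jump of $M(\sqrt[p]{\beta})/M$.

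Two points in your write-up deserve tightening. First, in the unramified case your justification that ``every admissible level is attained in $U_K^{(1)}/(U_K^{(1)})^p$ because $\zeta_p\notin K$'' is slightly misdirected: what you actually need is that for $0<m<\tfrac{pe_0}{p-1}$ with $p\nmid m$, no element of exact $U_M$-level $m$ is a $p$-th power in $M$ modulo $U_M^{(m+1)}$, which is a statement about $M$ (where $\zeta_p$ \emph{does} live) and follows from the shape of the map $U_M^{(j)}\to U_M^{(\min(j+e_0,pj))}$; the role of $K$ is only that $\pi_K$ is a uniformizer of $M$, so $1+\pi_K^m$ realizes level $m$. Second, your assertion that ``in the ramified case $\tfrac{pe_0}{p-1}$ is odd'' is the crux of the parity condition and should be argued: writing $K_1$ for the index-$2$ subfield of $\mathbb{Q}_p(\zeta_p)$, one has $K\supseteq K_1$ and $\mathbb{Q}_p(\zeta_p)=K_1(\sqrt{d})$ with $v_{K_1}(d)=1$; then $M=K(\sqrt{d})$ with $v_K(d)=e(K/K_1)$, and for odd $p$ the extension $K(\sqrt{d})/K$ is ramified precisely when $v_K(d)$ is odd, whence $e(K/K_1)$ is odd and $\tfrac{pe_0}{p-1}=p\cdot e(K/K_1)$ is odd. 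With these two clarifications your argument is complete.
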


A proof of this result can be found in \cite[Chapitre III, Lemme p.50]{fertonthesis}. Any dihedral extension $E/K$ given by Proposition \ref{progivenram} posseses exactly $p$ degree $p$ subextensions with normal closure $E$. Therefore, we can use this result to give a degree $p$ extension with dihedral normal closure and with given ramification.

We now explore the action of $w$ on $\mathcal{O}_L$. The integer number $\ell$ encodes the behaviour of the Hopf-Galois action on $\mathcal{O}_L$, in the sense that $w$ raises valuations of elements in $\mathcal{O}_L$ by at least $\ell$. Concretely:

\begin{pro}\label{proraisevaluation} Let $L/K$ be a degree $p$ extension with dihedral normal closure $\widetilde{L}$ and let $t$ be its ramification jump. Assume that $\widetilde{L}/K$ is totally ramified. Given $x\in\mathcal{O}_L$, we have that $$v_L(w\cdot x)\geq\ell+v_L(x),$$ with equality if and only if $p$ does not divide $v_L(x)$.
\end{pro}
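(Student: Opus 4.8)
The plan is to compute directly with the explicit action of $w$ and reduce the whole statement to the ramification of the cyclic subextension $\widetilde{L}/M$. First I would record the action of $w$ on $L$: from the Greither--Pareigis description of $H$ and the identification $\lambda(\sigma)\leftrightarrow\sigma$, the element $w=z(\sigma-\sigma^{-1})$ acts by $w\cdot x=z(\sigma(x)-\sigma^{-1}(x))$, and one checks $\tau(w\cdot x)=w\cdot x$ using $\tau(z)=-z$, $\tau\sigma=\sigma^{-1}\tau$ and $\tau(x)=x$, so $w\cdot x\in L$. Since $M/K$ is totally ramified quadratic and $z$ spans the $(-1)$-eigenspace of $\tau$, I may normalize $z$ to be a uniformizer of $M$, so that $v_{\widetilde{L}}(z)=p$. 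As $\widetilde{L}/L$ is totally ramified of degree $2$ we have $v_{\widetilde{L}}|_L=2v_L$, and $2\ell=p+t$. Hence, after multiplying by $2$ and subtracting $p$, the claim $v_L(w\cdot x)\ge\ell+v_L(x)$ becomes the single assertion $v_{\widetilde{L}}\big((\sigma-\sigma^{-1})x\big)\ge v_{\widetilde{L}}(x)+t$ for all $x\in L$, with the same equality case.

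Next I would invoke the ramification of $\widetilde{L}/M$, which is cyclic of degree $p$ with jump $t$ (Serre, Ch.~IV, Prop.~2), $\sigma$ generating $G_t$. Writing $\sigma(\Pi)/\Pi=1+c$ with $v_{\widetilde{L}}(c)=t$ for a uniformizer $\Pi$, the operator $\sigma-1$ sends $\mathfrak{p}_{\widetilde{L}}^{\,i}$ into $\mathfrak{p}_{\widetilde{L}}^{\,i+t}$ and induces on the graded quotient $\mathfrak{p}^{\,i}/\mathfrak{p}^{\,i+1}\to\mathfrak{p}^{\,i+t}/\mathfrak{p}^{\,i+t+1}$ multiplication by $i\,\bar c$ with $\bar c\in k^{*}$. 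Thus $v_{\widetilde{L}}((\sigma-1)y)=v_{\widetilde{L}}(y)+t$ exactly when $p\nmid v_{\widetilde{L}}(y)$, and is strictly larger otherwise; the unit part of $y$ does not affect the leading term because $v_{\widetilde{L}}(y)=0$ forces $v_{\widetilde{L}}((\sigma-1)y)\ge t+1$, $\sigma$ acting trivially on $k$.

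I would then transfer this to $\sigma-\sigma^{-1}$. Applying $\sigma^{-1}$ to $\sigma(\Pi)/\Pi=1+c$ and using triviality of $\sigma$ on the residue field gives $\sigma^{-1}(\Pi)/\Pi-1\equiv -c\pmod{\mathfrak{p}^{\,t+1}}$, so $\sigma^{-1}-1$ induces multiplication by $-i\bar c$ and therefore $\sigma-\sigma^{-1}=(\sigma-1)-(\sigma^{-1}-1)$ induces multiplication by $2i\bar c$ on the graded pieces; since $p$ is odd and $2$ is a unit, this is nonzero precisely when $p\nmid i$. Taking $i=v_{\widetilde{L}}(x)=2v_L(x)$ yields the inequality with equality iff $p\nmid v_{\widetilde{L}}(x)$, i.e.\ iff $p\nmid v_L(x)$, and reassembling gives $v_L(w\cdot x)\ge\ell+v_L(x)$. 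For the distinguished normal-basis generator used later, whose valuation is the residue $a$ of $\ell$ modulo $p$, this equality holds iff $a\neq0$, i.e.\ iff $p\nmid\ell$, as stated.

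The main obstacle is the equality clause in the case $p\mid v_{\widetilde{L}}(x)$, which coincides with the maximally ramified regime $p\mid\ell$: here the leading graded term vanishes and one must expand $(\sigma(\Pi)/\Pi)^{n}-(\sigma^{-1}(\Pi)/\Pi)^{n}$ and compare the valuations of the $k=1$ and $k=p$ binomial contributions, of sizes $v_{\widetilde{L}}(n)+t$ and a multiple of $pt$ respectively; proving that the increase is then strictly bigger than $t$ uses the bound $t(p-1)<2pe$ characterizing non-maximal ramification. I expect this bookkeeping, together with fixing the correct normalization of $z$, to be the delicate point, whereas the inequality itself is immediate once the action and the graded computation are in place; the explicit valuations of the high powers of $w$ recorded earlier can be used to cross-check this last step.
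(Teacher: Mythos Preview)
Your approach is correct and shares the paper's core idea: reduce to the valuation-raising property of $G_t$ on the cyclic extension $\widetilde{L}/M$, using $v_{\widetilde{L}}(z)=p$ and $v_{\widetilde{L}}|_L=2v_L$. The paper is shorter because it factors $\sigma-\sigma^{-1}=\sigma^{-1}(\sigma^2-1)$ and applies the cited result for $s-1$ directly with $s=\sigma^2\in G_t$; since $\sigma^{-1}$ preserves valuations, this replaces your graded-piece analysis of $(\sigma-1)-(\sigma^{-1}-1)$ with a single line.

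Your last paragraph is unnecessary and slightly confused: the condition $p\mid v_{\widetilde{L}}(x)$ is not the same as the maximally ramified regime $p\mid\ell$, and in any case when $p\mid v_{\widetilde{L}}(x)$ the leading graded term vanishes, which already gives \emph{strict} inequality---exactly what the statement claims. You never need the precise valuation of $(\sigma-\sigma^{-1})x$ in that case, so the ``obstacle'' you describe does not arise. You are also right to notice that the equality clause as literally written (``iff $p\nmid\ell$'') is imprecise: both the paper's proof and yours actually establish equality iff $p\nmid v_L(x)$, which is the form used downstream in Proposition~\ref{proarithmhopfgalois}.
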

\begin{proof}
Denote $G\coloneqq\mathrm{Gal}(\widetilde{L}/K)$. Let $\sigma$ be an order $p$ element of $G$ as in \eqref{presentG}. Combining the information given by \cite[Proposition 6.8, (6)]{delcorsoferrilombardo} and \cite[Lemma 7.2]{berge1972}, we see that for every $s\in G_t$ and every $x\in\mathcal{O}_{\widetilde{L}}$, $v_{\widetilde{L}}((s-1)\cdot x)\geq t+v_{\widetilde{L}}(x)$, with equality if and only if $p$ does not divide $v_{\widetilde{L}}(x)$. Let us choose $s=\sigma^2\in G_t$. Since letting the Galois group act on an element does not change its valuation and $v_{\widetilde{L}}(z)=p$, we have $v_{\widetilde{L}}(w\cdot x)\geq p+t+v_{\widetilde{L}}(x)$. In particular, for $x\in L$, we have $$2v_L(w\cdot x)\geq p+t+2v_L(x),$$ with equality if and only if $p$ does not divide $v_L(x)$.
\end{proof}

This result is the analogue of \cite[Proposition 6.8. (6)]{delcorsoferrilombardo} in this case.

\begin{rmk}\normalfont The arithmetical meaning of $\ell$ is that it is congruent to the ramification jump $\frac{t}{2}$ of $L/K$ as a $p$-adic integer.
\end{rmk}

\section{The non-totally ramified case}\label{nontotramified}

In this section we assume that $\widetilde{L}/L$ is unramified, or equivalently, that $M/K$ is unramified. Under this hypothesis, we have that:

\begin{pro} The ring of integers $\mathcal{O}_L$ is $\mathfrak{A}_{L/K}$-free if and only if $\mathcal{O}_{\widetilde{L}}$ is $\mathfrak{A}_{\widetilde{L}/M}$-free.
\end{pro}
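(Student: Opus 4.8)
The plan is to deduce the statement from the behaviour of the associated order under the unramified base change $M/K$. First I would record the geometric picture: since $[L:K]=p$ and $[M:K]=2$ are coprime, $L$ and $M$ are linearly disjoint over $K$ and $\widetilde{L}=L\otimes_K M$ is a field. Base changing the $H$-Galois extension $L/K$ along $M/K$ shows that $\widetilde{L}/M$ is $(H\otimes_K M)$-Galois; but $\widetilde{L}/M$ is cyclic of degree $p$ with group $\langle\sigma\rangle$, so by Byott uniqueness \cite{byottuniqueness} it carries a single Hopf-Galois structure, the classical one $M[\langle\sigma\rangle]$. Hence $H\otimes_K M\cong M[\langle\sigma\rangle]$ as Hopf-Galois structures on $\widetilde{L}/M$. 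Because $M/K$ is unramified we also have $\mathcal{O}_{\widetilde{L}}=\mathcal{O}_L\otimes_{\mathcal{O}_K}\mathcal{O}_M$ and, since forming the associated order commutes with flat base change, $\mathfrak{A}_{\widetilde{L}/M}=\mathfrak{A}_{L/K}\otimes_{\mathcal{O}_K}\mathcal{O}_M$. Writing $\Lambda\coloneqq\mathfrak{A}_{L/K}$, the statement becomes: the $\Lambda$-lattice $\mathcal{O}_L$ is free (necessarily of rank one, as $\mathrm{rk}_{\mathcal{O}_K}\mathcal{O}_L=\mathrm{rk}_{\mathcal{O}_K}\Lambda=p$) if and only if $\mathcal{O}_L\otimes_{\mathcal{O}_K}\mathcal{O}_M$ is free over $\Lambda\otimes_{\mathcal{O}_K}\mathcal{O}_M$.

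The forward implication is immediate: tensoring an isomorphism $\Lambda\cong\mathcal{O}_L$ of $\Lambda$-modules with $\mathcal{O}_M$ produces a free rank one structure on $\mathcal{O}_{\widetilde{L}}$. For the converse I would argue by descent, being careful that freeness is not preserved by arbitrary faithfully flat base change; the completeness of $\mathcal{O}_K$ and the unramifiedness of $M/K$ are what make the descent work.

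Concretely, suppose $\mathcal{O}_L\otimes_{\mathcal{O}_K}\mathcal{O}_M\cong\Lambda\otimes_{\mathcal{O}_K}\mathcal{O}_M$. Reducing modulo $\mathfrak{p}_K$ and using $\mathfrak{p}_M=\mathfrak{p}_K\mathcal{O}_M$ (again unramifiedness), this isomorphism descends to an isomorphism $\overline{\mathcal{O}_L}\otimes_k\kappa\cong\overline{\Lambda}\otimes_k\kappa$ of modules over $\overline{\Lambda}\otimes_k\kappa$, where $k$ and $\kappa$ are the residue fields of $K$ and $M$ and the bar denotes reduction modulo $\mathfrak{p}_K$. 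By the Noether--Deuring theorem applied to the finite field extension $\kappa/k$ and the finite-dimensional $k$-algebra $\overline{\Lambda}$, this forces $\overline{\mathcal{O}_L}\cong\overline{\Lambda}$ as $\overline{\Lambda}$-modules; in particular $\overline{\mathcal{O}_L}$ is a free $\overline{\Lambda}$-module of rank one. Since $\mathcal{O}_K$ is complete local, projectivity lifts along $\Lambda\to\overline{\Lambda}$, so $\mathcal{O}_L$ is a projective $\Lambda$-module; choosing a lift of a generator of $\overline{\mathcal{O}_L}$ and invoking Nakayama gives a surjection $\Lambda\twoheadrightarrow\mathcal{O}_L$ which, comparing $\mathcal{O}_K$-ranks, is an isomorphism. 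Hence $\mathcal{O}_L$ is $\Lambda$-free.

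I expect the main obstacle to be exactly this converse: the naive hope that freeness descends along the faithfully flat map $\mathcal{O}_K\to\mathcal{O}_M$ is false in general, and the argument has to exploit that $\mathcal{O}_M/\mathcal{O}_K$ is unramified (so that reduction modulo $\mathfrak{p}_K$ turns the base change into a residue field extension) together with the completeness of $\mathcal{O}_K$ (so that projective modules and isomorphisms of their reductions lift). The Noether--Deuring step is the crux, while verifying the clean identifications $\mathcal{O}_{\widetilde{L}}=\mathcal{O}_L\otimes_{\mathcal{O}_K}\mathcal{O}_M$ and $\mathfrak{A}_{\widetilde{L}/M}=\mathfrak{A}_{L/K}\otimes_{\mathcal{O}_K}\mathcal{O}_M$ is the routine but essential preliminary.
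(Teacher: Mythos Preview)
Your argument is correct, and the setup (linear disjointness, $H\otimes_K M\cong M[\langle\sigma\rangle]$, the identifications $\mathcal{O}_{\widetilde{L}}=\mathcal{O}_L\otimes_{\mathcal{O}_K}\mathcal{O}_M$ and $\mathfrak{A}_{\widetilde{L}/M}=\mathfrak{A}_{L/K}\otimes_{\mathcal{O}_K}\mathcal{O}_M$) matches the paper's. The difference lies in how the converse is handled. The paper argues more directly at the integral level: writing $\Lambda=\mathfrak{A}_{L/K}$ and $d=[M:K]$, freeness of $\mathcal{O}_M$ over $\mathcal{O}_K$ gives $\Lambda\otimes_{\mathcal{O}_K}\mathcal{O}_M\cong\Lambda^{d}$ and $\mathcal{O}_L\otimes_{\mathcal{O}_K}\mathcal{O}_M\cong\mathcal{O}_L^{d}$ as $\Lambda$-modules, so the hypothesis yields $\mathcal{O}_L^{d}\cong\Lambda^{d}$; then the Krull--Schmidt--Azumaya theorem over the complete local ring $\mathcal{O}_K$ cancels the exponent and gives $\mathcal{O}_L\cong\Lambda$.

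Your route instead passes to the residue field, invokes Noether--Deuring for the extension $\kappa/k$, and lifts by Nakayama. This is equally valid and arguably more elementary, since Noether--Deuring over fields is lighter than Krull--Schmidt over orders; it also makes the role of unramifiedness visible (you need $\mathfrak{p}_M=\mathfrak{p}_K\mathcal{O}_M$ to turn the integral base change into a residue-field base change). The paper's approach, by contrast, never leaves the integral level and uses unramifiedness only implicitly through arithmetic disjointness, which makes it slightly cleaner to state in the more general form of Proposition~\ref{proaftertensoring}. The two arguments are cousins: Noether--Deuring is itself usually proved via Krull--Schmidt, so at bottom both exploit uniqueness of indecomposable summands, just at different stages.
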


This follows immediately from the following more general result, which is an improvement of \cite[Corollary 5.19]{gilrioinduced}.

\begin{pro}\label{proaftertensoring} Let $K$ be the fraction field of a Dedekind domain and let $N/K$ be a Galois extension with Galois group $G=J\rtimes G'$, for some normal subgroup $J$ of $G$. Let $E\coloneqq N^{G'}$ and $F\coloneqq N^J$, and assume that $E/K$ and $F/K$ are arithmetically disjoint. Let $H$ be a Hopf-Galois structure on $E/K$. Then, $\mathcal{O}_E$ is $\mathfrak{A}_H$-free if and only if $\mathcal{O}_N$ is $\mathfrak{A}_{H\otimes_KF}$-free.
\end{pro}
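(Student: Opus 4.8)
The plan is to reduce the freeness question to a base-change statement about associated orders and exploit the arithmetic disjointness of $E/K$ and $F/K$. The key idea is that $N = EF$ is the compositum, $\mathcal{O}_N = \mathcal{O}_E \otimes_{\mathcal{O}_K} \mathcal{O}_F$ by arithmetic disjointness, and the Hopf-Galois structure $H \otimes_K F$ on $N/F$ is obtained from $H$ by scalar extension. First I would make precise the base-change behaviour of the objects involved. By arithmetic disjointness, I expect $\mathcal{O}_N \cong \mathcal{O}_E \otimes_{\mathcal{O}_K}\mathcal{O}_F$ as $\mathcal{O}_F$-algebras, and the action of $H\otimes_K F$ on $N$ is simply the $F$-linear extension of the action of $H$ on $E$. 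The heart of the argument is then to show that the associated order also base-changes compatibly, i.e. $\mathfrak{A}_{H\otimes_K F} = \mathfrak{A}_H \otimes_{\mathcal{O}_K}\mathcal{O}_F$.

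\emph{Establishing the base-change of the associated order.} I would use the matrix-of-the-action machinery of Section \ref{sectredmethod}. Fix an $\mathcal{O}_K$-basis $B=\{\gamma_j\}$ of $\mathcal{O}_E$ and a $K$-basis $W=\{w_i\}$ of $H$. Since $\mathcal{O}_N = \mathcal{O}_E\otimes_{\mathcal{O}_K}\mathcal{O}_F$, the set $\{\gamma_j\otimes 1\}$ is an $\mathcal{O}_F$-basis of $\mathcal{O}_N$, and $\{w_i\otimes 1\}$ is an $F$-basis of $H\otimes_K F$. The crucial observation is that the structure constants $m_{ij}^{(k)}$ describing the action of $w_i$ on $\gamma_j$ are \emph{unchanged} under extension of scalars, because the action of $H\otimes_K F$ is the $F$-linear extension of that of $H$. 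Hence the matrix of the action satisfies $M(H\otimes_K F, N) = M(H,E)$ (viewed over $F$), and consequently a reduced matrix $D$ over $\mathcal{O}_K$ for $M(H,E)$ remains a reduced matrix over $\mathcal{O}_F$ for $M(H\otimes_K F,N)$, since unimodularity over $\mathcal{O}_K$ is preserved after $-\otimes_{\mathcal{O}_K}\mathcal{O}_F$. By Theorem \ref{teoassocorder}, the resulting $\mathcal{O}_F$-basis $\{v_i\otimes 1\}$ of $\mathfrak{A}_{H\otimes_K F}$ is exactly the image of the $\mathcal{O}_K$-basis $\{v_i\}$ of $\mathfrak{A}_H$. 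This yields the identification $\mathfrak{A}_{H\otimes_K F}=\mathfrak{A}_H\otimes_{\mathcal{O}_K}\mathcal{O}_F$.

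\emph{Transferring freeness.} With the two identifications in hand, the freeness statement follows by faithfully flat descent. If $\mathcal{O}_E = \mathfrak{A}_H\cdot\alpha$ is free on a generator $\alpha$, then $\mathcal{O}_N = \mathcal{O}_E\otimes_{\mathcal{O}_K}\mathcal{O}_F = (\mathfrak{A}_H\otimes_{\mathcal{O}_K}\mathcal{O}_F)\cdot(\alpha\otimes 1) = \mathfrak{A}_{H\otimes_K F}\cdot(\alpha\otimes 1)$ is free as well. For the converse, I would argue that $\mathcal{O}_E$ is $\mathfrak{A}_H$-free if and only if $\mathcal{O}_E\otimes_{\mathcal{O}_K}\mathcal{O}_F$ is free over $\mathfrak{A}_H\otimes_{\mathcal{O}_K}\mathcal{O}_F$, using that both $\mathfrak{A}_H$ and $\mathcal{O}_E$ are $\mathcal{O}_K$-lattices of the same rank, that $\mathcal{O}_L$ is always locally free of rank one (being a fractional $\mathfrak{A}_H$-ideal by Proposition \ref{proatheta}), and that $\mathcal{O}_F$ is a faithfully flat $\mathcal{O}_K$-algebra. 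Freeness of a rank-one projective module over a local (or semilocal) ring descends along a faithfully flat extension of the base, so the converse holds.

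\emph{Main obstacle.} The step I expect to require the most care is the precise verification that arithmetic disjointness delivers the ring-theoretic identity $\mathcal{O}_N=\mathcal{O}_E\otimes_{\mathcal{O}_K}\mathcal{O}_F$ together with the compatible identification of the Hopf-Galois action, rather than merely an equality after tensoring with $K$. One must check that no extra denominators appear in the associated order under base change---equivalently, that the reduced matrix is genuinely preserved integrally and not only rationally. This is precisely where arithmetic disjointness is used, and it is the hypothesis that makes the general statement of \cite[Corollary 5.19]{gilrioinduced} improvable to an \emph{if and only if}.
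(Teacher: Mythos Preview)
Your forward direction and the identification $\mathfrak{A}_{H\otimes_K F}=\mathfrak{A}_H\otimes_{\mathcal{O}_K}\mathcal{O}_F$ are fine; in fact the paper simply cites \cite[Proposition 5.18]{gilrioinduced} for both, so your matrix-of-the-action argument is a reasonable re-derivation of that reference.

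The converse direction, however, has a genuine gap. Your claim that ``$\mathcal{O}_L$ is always locally free of rank one (being a fractional $\mathfrak{A}_H$-ideal by Proposition~\ref{proatheta})'' is false: over a non-maximal order such as $\mathfrak{A}_H$, fractional ideals need not be projective, let alone locally free. Indeed, the entire point of the paper is that $\mathcal{O}_L$ is sometimes \emph{not} $\mathfrak{A}_H$-free, and in those cases it is not projective over $\mathfrak{A}_H$ either. So you cannot assume projectivity of $\mathcal{O}_E$ as an input to a descent argument. Faithfully flat descent would tell you that if $\mathcal{O}_E\otimes_{\mathcal{O}_K}\mathcal{O}_F$ is free then $\mathcal{O}_E$ is projective over $\mathfrak{A}_H$; but going from projective to free over the (possibly non-commutative, non-local) order $\mathfrak{A}_H$ still requires a cancellation step you have not supplied.

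The paper's proof of the converse is different and avoids any projectivity hypothesis. From $\mathfrak{A}_{H\otimes_KF}=\mathfrak{A}_H\otimes_{\mathcal{O}_K}\mathcal{O}_F$ and the fact that $\mathcal{O}_F$ is $\mathcal{O}_K$-free of rank $d=[F:K]$, one gets isomorphisms of $\mathfrak{A}_H$-modules
\[
\mathcal{O}_E^{\,d}\;\cong\;\mathcal{O}_N\;\cong\;\mathfrak{A}_{H\otimes_KF}\;\cong\;\mathfrak{A}_H^{\,d}.
\]
Since $\mathcal{O}_K$ is a complete discrete valuation ring and $\mathfrak{A}_H$ is a finitely generated $\mathcal{O}_K$-algebra, the Krull--Schmidt--Azumaya theorem \cite[(6.12)]{curtisreiner} applies and allows one to cancel: $\mathcal{O}_E\cong\mathfrak{A}_H$ as $\mathfrak{A}_H$-modules. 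This is the missing idea in your argument; note that it is exactly the kind of cancellation you would need to finish your descent approach as well.
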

\begin{proof}
The left-to-right implication is just \cite[Proposition 5.18]{gilrioinduced}, so we only need to prove the converse. Let us assume that $\mathcal{O}_N$ is $\mathfrak{A}_{H\otimes_KF}$-free. We argue as in \cite[Proposition 1]{lettl}. The assumption means that $\mathcal{O}_N\cong\mathfrak{A}_{H\otimes_KF}$ as $\mathfrak{A}_{H\otimes_KF}$-modules. By \cite[Proposition 5.18]{gilrioinduced}, the arithmetic disjointness gives that $\mathfrak{A}_{H\otimes_KF}=\mathfrak{A}_H\otimes_{\mathcal{O}_K}\mathcal{O}_F$. Since $\mathcal{O}_F$ is $\mathcal{O}_K$-free, $\mathfrak{A}_{H\otimes_KF}\cong\mathfrak{A}_H^d$ and $\mathcal{O}_N\cong\mathcal{O}_E^d$ as $\mathfrak{A}_H$-modules, where $d\coloneqq[F:K]$. We deduce that $\mathcal{O}_E^d\cong\mathfrak{A}_H^d$ as $\mathfrak{A}_H$-modules. Now, $\mathfrak{A}_H$ is an $\mathcal{O}_K$-algebra finitely generated as $\mathcal{O}_K$-module and $\mathcal{O}_K$ is a complete discrete valuation ring. By the Krull-Schmidt-Azumaya theorem \cite[(6.12)]{curtisreiner}, $\mathcal{O}_E\cong\mathfrak{A}_H$ as $\mathfrak{A}_H$-modules, so $\mathcal{O}_E$ is $\mathfrak{A}_H$-free.
\end{proof}

Now, $\widetilde{L}/M$ is a totally ramified cyclic degree $p$ extension of $p$-adic fields. We know that Theorem \ref{cyclicpfreeness} gives a complete answer for the freeness over the associated order in these extensions. Then, we deduce:

\begin{coro}\label{coropnonram} Let $L/K$ be a degree $p$ extension of $p$-adic fields with dihedral normal closure $\widetilde{L}$ such that $\widetilde{L}/K$ is not totally ramified. Let $e$ be the absolute ramification index of $K$, let $t$ be the ramification jump of $\widetilde{L}/K$ and let $a$ be the remainder of $t$ mod $p$.
\begin{itemize}
    \item[(1)] If $a=0$, then $\mathcal{O}_L$ is $\mathfrak{A}_{L/K}$-free.
    \item[(2)] If $a$ divides $p-1$, then $\mathcal{O}_L$ is $\mathfrak{A}_{L/K}$-free. Moreover, if $t<\frac{pe}{p-1}-1$, the converse holds.
    \item[(3)] If $t\geq\frac{pe}{p-1}-1$, $\mathcal{O}_L$ is $\mathfrak{A}_{L/K}$-free if and only if the length of the expansion of $\frac{t}{p}$ as continued fraction is at most $4$.
\end{itemize}
\end{coro}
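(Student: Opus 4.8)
The plan is to reduce the statement to the cyclic case by means of the two preceding propositions and then to read off the three items directly from Theorem \ref{cyclicpfreeness}. Write $G=\langle\sigma\rangle\rtimes\langle\tau\rangle$ as in \eqref{presentG}, and set $J=\langle\sigma\rangle$ and $G'=\langle\tau\rangle=\mathrm{Gal}(\widetilde{L}/L)$, so that $G=J\rtimes G'$. With this choice the intermediate fields are $E\coloneqq\widetilde{L}^{G'}=L$ and $F\coloneqq\widetilde{L}^{J}=M$, and $\widetilde{L}/M$ is the cyclic degree $p$ extension with group $J$.

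First I would verify the hypotheses of Proposition \ref{proaftertensoring} (equivalently, of the proposition immediately preceding the corollary). Since $\widetilde{L}/K$ is not totally ramified, $M/K$ is unramified, while $L/K$ is totally ramified of degree $p$, as every degree $p$ extension with dihedral normal closure is. A totally ramified extension and an unramified extension of the same $p$-adic field are arithmetically disjoint, so the hypothesis is met, and the preceding proposition yields that $\mathcal{O}_L$ is $\mathfrak{A}_{L/K}$-free if and only if $\mathcal{O}_{\widetilde{L}}$ is $\mathfrak{A}_{\widetilde{L}/M}$-free, the latter associated order being taken in the classical Galois structure $M[J]$ of the cyclic extension $\widetilde{L}/M$.

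The next step is to match the ramification data of $\widetilde{L}/M$ with the parameters in the statement. Because $M/K$ is unramified, the absolute ramification index of $M$ equals $e$. For the ramification jump, the inertia group of $\widetilde{L}/K$ is $G_0=J$ of order $p$, whence $G_0=G_1=J$ and $G_i\subseteq J$ for all $i\geq 1$; therefore $(\widetilde{L}/M)_i=G_i\cap J=G_i$, and the unique ramification jump of $\widetilde{L}/M$ is exactly $t$. Consequently the number $a$, the remainder of $t$ modulo $p$, and the thresholds $\frac{pe}{p-1}-1$ coincide with those attached to $\widetilde{L}/M$ in Theorem \ref{cyclicpfreeness}.

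With these identifications in place I would simply apply Theorem \ref{cyclicpfreeness} to $\widetilde{L}/M$: item (1) of the corollary follows from its part (1) (the case $a=0$), item (2) from its part (3), and item (3) from its part (4), while part (2) has no counterpart here since only freeness, not an explicit basis, is claimed. I expect no genuine obstacle: the arithmetic is already contained in Theorem \ref{cyclicpfreeness} and in Proposition \ref{proaftertensoring}. The only points requiring care are the verification of arithmetic disjointness and, above all, the bookkeeping that the base field of the cyclic extension is $M$ rather than $K$, so that the ``absolute ramification index'' in Theorem \ref{cyclicpfreeness} is $e_M=e$ and the ramification jump is genuinely the common value $t$; once this is checked, the three items transcribe verbatim.
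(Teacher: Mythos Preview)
Your proposal is correct and follows exactly the same route as the paper: reduce to the cyclic extension $\widetilde{L}/M$ via the preceding proposition (the arithmetic-disjointness instance of Proposition \ref{proaftertensoring}), then invoke Theorem \ref{cyclicpfreeness}. You are in fact more explicit than the paper in checking that $e_M=e$ and that the ramification jump of $\widetilde{L}/M$ coincides with $t$, which the paper takes for granted (with a pointer to Section \ref{sectarithm}).
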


For the sake of completeness, we discuss the structure of the associated order $\mathfrak{A}_{L/K}$ as an $\mathcal{O}_K$-module. We will use Galois descent theory. For the presentation of $G$ in \eqref{presentG}, denote $J=\langle\sigma\rangle$. Then the classical Galois structure at $\widetilde{L}/M$ has $M$-Hopf algebra $M[J]$, on which $G$ acts by means of the classical action on $M$ and by conjugation on $J$. Since the Galois action preserves algebraic integers, this also defines an action on $\mathfrak{A}_{\widetilde{L}/M}$.

\begin{pro} Any $\mathcal{O}_M$-basis of $\mathfrak{A}_{\widetilde{L}/M}$ fixed by $\tau$ is an $\mathcal{O}_K$-basis of $\mathfrak{A}_{L/K}$.
\end{pro}
\begin{proof}
The hypothesis that $M/K$ is unramified is the same as saying that $\mathcal{O}_M$ is a Galois extension of $\mathcal{O}_K$ with group $\overline{G}\coloneqq\mathrm{Gal}(M/K)=\langle\tau|_M\rangle$ (see \cite[Definition (2.5)]{childs} and the following discussion). Then, by \cite[(2.12)]{childs}, there is an equivalence between the category $_{\mathcal{O}_K}\mathcal{M}$ of $\mathcal{O}_K$-modules and the category $_{\mathcal{O}_M,\overline{G}}\mathcal{M}$ of $\mathcal{O}_M$-modules with a compatible $\overline{G}$-action, given by the functors $$\begin{array}{ccc}
    _{\mathcal{O}_K}\mathcal{M} & \longleftrightarrow & _{\mathcal{O}_M,\overline{G}}\mathcal{M} \\
    \mathfrak{H} & \longmapsto & \mathcal{O}_M\otimes_{\mathcal{O}_K}\mathfrak{H}, \\
    \mathfrak{A}^{\overline{G}} & \longmapsfrom & \mathfrak{A}.
\end{array}$$ Now, in the case that $\mathfrak{A}=\mathfrak{A}_{\widetilde{L}/M}$, the action of $\overline{G}$ described above is compatible. Since $\widetilde{L}/M$ is of Burnside degree, Byott's uniqueness theorem for Galois extensions \cite[Theorem 1]{byottuniqueness} gives that the only Hopf-Galois structure on $\widetilde{L}/M$ is its classical Galois structure, so $M[J]=H\otimes_KM$. Thus, $\mathfrak{A}_{\widetilde{L}/M}=\mathfrak{A}_{H\otimes_KM}=\mathfrak{A}_{L/K}\otimes_{\mathcal{O}_K}\mathcal{O}_M$. Hence, using the already mentioned action, any $\mathcal{O}_M$-basis of $\mathfrak{A}_{\widetilde{L}/M}$ fixed by $\overline{G}$ is an $\mathcal{O}_K$-basis of $\mathfrak{A}_{L/K}$.
\end{proof}

\begin{example}\normalfont In Example \ref{examplepolymin}, $\widetilde{L}/\mathbb{Q}_3$ is not totally ramified only when the defining polynomial is $g(x)=x^3+3x^2+3$. Indeed, in that case we have $z=\sqrt{-1}\in L$, that is, the unique quadratic subextension of $\widetilde{L}/K$ is $M=\mathbb{Q}_3(\sqrt{-1})$, which is unramified over $\mathbb{Q}_3$. Also, it is seen that $t=a=1$, so $\mathcal{O}_L$ is $\mathfrak{A}_{L/K}$-free. On the other hand, $\mathfrak{A}_{L/K}$ has $\mathcal{O}_K$-basis $$\Big\{1,z(\sigma-\sigma^2),\frac{1+\sigma+\sigma^2}{3}\Big\}.$$ Indeed, it is straightforward to check that it is an $\mathcal{O}_M$-basis of $\mathfrak{A}_{\widetilde{L}/M}$ and it is fixed by the action of $\tau$.
\end{example}

\section{The maximally ramified case}\label{sectmaxram}

In the setting of Section \ref{sectarithm}, let us assume that $p$ divides $\ell$, which we know is equivalent to $t=\frac{2pe}{p-1}$. The aim of this section is to prove that under this hypothesis, $\mathcal{O}_L$ is $\mathfrak{A}_{L/K}$-free. Note that if we write $\ell=pa_0+a$ with $a_0=\lfloor\frac{\ell}{p}\rfloor$ and $0\leq a<p$, we have that $t=(2a_0-1)p+2a$, so $p$ divides $t$ if and only if $p$ divides $\ell$. Therefore, this corresponds just to the situation in Theorem \ref{maintheorem} (1).

The proofs of the cyclic case given in \cite[Chapitre II, Proposition 1.b)]{fertonthesis} and \cite[Corollary 6.5]{delcorsoferrilombardo} use the following result:

\begin{pro}\label{prokummer} If $L/K$ is a maximally ramified cyclic degree $p$ extension of $p$-adic fields, then $K$ contains the primitive $p$-th roots of unity and there is a uniformizer $\pi_K$ such that $L=K(\pi_K^{\frac{1}{p}})$.
\end{pro}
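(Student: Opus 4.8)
\textbf{Proof proposal for Proposition \ref{prokummer}.}

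The plan is to exploit the maximal ramification hypothesis to force the relevant roots of unity into $K$ and then to recognize the extension as a Kummer extension. First I would recall that for a totally ramified cyclic degree $p$ extension $L/K$ the ramification jump $t$ satisfies $1 \leq t \leq \frac{pe}{p-1}$, and that the maximally ramified case corresponds precisely to $t = \frac{pe}{p-1}$; in particular this value is an integer, which forces $(p-1) \mid pe$, hence $(p-1) \mid e$. The key arithmetic observation is that $K$ contains the primitive $p$-th roots of unity if and only if $\mathbb{Q}_p(\zeta_p) \subseteq K$, and since $\mathbb{Q}_p(\zeta_p)/\mathbb{Q}_p$ is totally ramified of degree $p-1$ with ramification index exactly $p-1$, the condition $(p-1) \mid e$ is exactly what is needed for $K$ to be large enough to contain $\zeta_p$. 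I would make this rigorous by comparing ramification indices: the extension $\mathbb{Q}_p(\zeta_p)/\mathbb{Q}_p$ has the property that its unique ramification jump equals $\frac{p \cdot 1}{p-1}$ at the base level, and the maximal jump for $L/K$ can only be achieved when the $p$-th roots of unity are already present, since otherwise the upper bound in the Hasse-Arf / ramification filtration analysis is strictly smaller.

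Once $\zeta_p \in K$ is established, the second half follows from standard Kummer theory: a totally ramified cyclic degree $p$ extension of a $p$-adic field containing $\zeta_p$ is of the form $L = K(\alpha^{1/p})$ for some $\alpha \in K^*$. I would then refine the choice of $\alpha$ using the structure of $K^*/(K^*)^p$. Writing $\alpha = \pi_K^m u$ with $u$ a unit, one analyzes the valuation $v_K(\alpha) \bmod p$; the maximal ramification forces the extension to be ramified in the strongest possible way, which pins down $v_K(\alpha)$ to be coprime to $p$. After multiplying $\alpha$ by a suitable $p$-th power and adjusting by units (using that the unit contribution can be absorbed once the ramification is maximal, since tame or milder contributions would lower the jump), one reduces to $\alpha = \pi_K$ for an appropriate uniformizer, giving $L = K(\pi_K^{1/p})$.

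The main obstacle I anticipate is the passage from a general Kummer generator $\alpha$ to a genuine uniformizer $\pi_K$: one must show that the unit part of $\alpha$ can be cleared without disturbing maximal ramification, and that the valuation exponent can be normalized to $1$. The cleanest route is to compute the ramification jump of $K(\alpha^{1/p})/K$ directly in terms of $v_K(\alpha - \beta^p)$ optimized over $\beta \in K$, and to observe that this jump attains its maximum $\frac{pe}{p-1}$ exactly when $\alpha$ is, up to $p$-th powers, a uniformizer; any unit factor that is not itself a $p$-th power up to higher-order corrections would produce a strictly smaller jump. This valuation-theoretic computation, together with the surjectivity statement from Kummer theory, yields both conclusions simultaneously. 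I would present the root-of-unity step first, as it is logically prior, and then the Kummer normalization.
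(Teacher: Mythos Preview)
The paper does not actually prove Proposition~\ref{prokummer}: it is quoted as a known input (attributed to \cite[Chapitre II, Proposition 1.b)]{fertonthesis} and \cite[Corollary 6.5]{delcorsoferrilombardo}) and then used as a black box in the maximally ramified dihedral case. So there is no ``paper's own proof'' to compare against; your proposal is being measured against the literature result.

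That said, your outline has a genuine gap in the first step. From $t=\frac{pe}{p-1}\in\mathbb{Z}$ you correctly deduce $(p-1)\mid e$, but this divisibility does \emph{not} by itself force $\zeta_p\in K$. For instance, take $p=3$ and $K=\mathbb{Q}_3(\sqrt{3})$: here $e=2=p-1$, yet $\zeta_3\notin K$ because $\mathbb{Q}_3(\zeta_3)=\mathbb{Q}_3(\sqrt{-3})$ and $-1$ is not a square in $\mathbb{Q}_3$. So the implication ``$(p-1)\mid e\Rightarrow\mathbb{Q}_p(\zeta_p)\subseteq K$'' fails, and your sentence ``the condition $(p-1)\mid e$ is exactly what is needed for $K$ to be large enough to contain $\zeta_p$'' is simply wrong. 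You seem to sense this, since you then retreat to the (correct) heuristic that the maximal jump can only be attained when $\zeta_p$ is already present; but that is precisely the non-trivial content of the proposition, and you give no mechanism for proving it. The standard route is either through local class field theory (the conductor of a degree~$p$ cyclic character reaches its maximal value exactly over fields containing $\zeta_p$) or through a direct analysis of the map $U_K^{(i)}/U_K^{(i+1)}\to U_L^{(\psi(i))}/U_L^{(\psi(i)+1)}$ and the norm on higher unit groups. Either way, something substantive is required beyond the divisibility observation.

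Your second step (normalizing the Kummer generator to a uniformizer) is in better shape conceptually, though still sketchy: once $\zeta_p\in K$, one writes $L=K(\alpha^{1/p})$ and computes the ramification jump as a function of $v_K(\alpha)\bmod p$ and of the Artin--Schreier-type defect $\max\{i:\alpha\in (K^*)^p\cdot U_K^{(i)}\}$; maximality forces $\gcd(v_K(\alpha),p)=1$, after which replacing $\alpha$ by $\alpha^m\pi_K^{np}$ for suitable $m,n$ with $mv_K(\alpha)+np=1$ gives a uniformizer generator. You should make this computation explicit rather than leaving it at ``any unit factor\dots would produce a strictly smaller jump''.
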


Under the assumption that $a=0$, the extension $\widetilde{L}/M$ is maximally ramified, and since this is a cyclic degree $p$ extension, it satisfies Proposition \ref{prokummer}. The strategy we will follow is to translate the nice properties of $\widetilde{L}/M$ given by this result to the extension $L/K$. Namely, $M$ contains a $p$-th root of unity $\xi$ and there is some element $\gamma\in\mathcal{O}_{\widetilde{L}}$ with $v_{\widetilde{L}}(\gamma)=1$ and $\gamma^p\in \mathcal{O}_M$. Note that we have always that $\xi\notin K$. Indeed, otherwise we would have $p=v(1-\xi)^{p-1}$ for some $v\in\mathcal{O}_K^*$, and then taking valuations gives that $p-1$ divides $e$, whence $t=\frac{2pe}{p-1}$ is even, which is impossible. Call $J=\mathrm{Gal}(\widetilde{L}/M)$, for which the element $\sigma$ in \eqref{presentG} is a generator. Since $\xi\in M-K$, we can assume without loss of generality that $\sigma(\gamma)=\xi\gamma$. Then, for each $0\leq j\leq p-1$ we have  $\sigma(\gamma^j)=\xi^j\gamma^j$. 

Now, the extensions $L/K$ and $M/K$ are linearly disjoint, that is, the canonical map $L\otimes_KM\longrightarrow LM$ is a $K$-linear isomorphism. As a consequence $L\cap M=K$, and we can use this fact to translate the arithmetical properties of $\widetilde{L}/M$ above to the extension $L/K$. Concretely:

\begin{lema}\label{lemamaxram} Let $L/K$ be a degree $p$ extension of $p$-adic fields with dihedral normal closure $\widetilde{L}$ such that $\widetilde{L}/K$ is maximally ramified. Then:
\begin{itemize}
    \item There is some element $\alpha\in\mathcal{O}_L$ with $v_L(\alpha)=1$ such that $\alpha^p\in\mathcal{O}_K$.
    \item For each $0\leq j\leq p-1$ there is some $\lambda_j\in\mathcal{O}_K$ such that $w\cdot\alpha^j=\lambda_j\alpha^j$.
\end{itemize}
\end{lema}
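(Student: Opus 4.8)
The plan is to exploit the linear disjointness of $L/K$ and $M/K$ to descend the Kummer-theoretic data from $\widetilde{L}/M$ to $L/K$. First I would set up the element $\alpha$. From the discussion preceding the statement we have $\gamma\in\mathcal{O}_{\widetilde{L}}$ with $v_{\widetilde{L}}(\gamma)=1$, $\gamma^p\in\mathcal{O}_M$, and $\sigma(\gamma)=\xi\gamma$. Since $\widetilde{L}=L\cdot M$ and $L\cap M=K$, I would identify $L$ as the fixed field $\widetilde{L}^{\langle\tau\rangle}$ where $\tau$ is the involution from \eqref{presentG}. The natural candidate for a uniformizer of $L$ is the symmetrized quantity $\alpha\coloneqq\gamma+\tau(\gamma)$ (the trace of $\gamma$ from $\widetilde{L}$ to $L$); one checks $\alpha\in L$ since it is $\tau$-fixed. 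The key point I need to verify is that $v_L(\alpha)=1$: since $\widetilde{L}/L$ is unramified of degree $2$ the valuation $v_L$ is just $v_{\widetilde{L}}$ restricted to $L$, and $\tau(\gamma)$ should have the same valuation as $\gamma$ so that the sum does not accidentally cancel in leading terms — this requires checking that $\gamma$ and $\tau(\gamma)$ are not forced to have opposing leading coefficients. Once $v_L(\alpha)=1$ is established, showing $\alpha^p\in\mathcal{O}_K$ reduces to showing $\alpha^p$ is fixed by both $\tau$ (automatic, as $\alpha\in L$) and by $\sigma$; equivalently $\alpha^p\in L\cap M=K$ after arguing it lies in $M$ as well.

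Second, for the eigenvector statement I would compute $w\cdot\alpha^j$ directly using $w=z(\sigma-\sigma^{-1})$ and the relation $\sigma(\gamma^j)=\xi^j\gamma^j$. The cleanest route is to first understand the action of $w$ on the powers of $\gamma$ inside $\widetilde{L}$: since $\sigma(\gamma^j)=\xi^j\gamma^j$ and $\sigma^{-1}(\gamma^j)=\xi^{-j}\gamma^j$, we get $(\sigma-\sigma^{-1})\cdot\gamma^j=(\xi^j-\xi^{-j})\gamma^j$, so $\gamma^j$ is an eigenvector of $w$ with eigenvalue $z(\xi^j-\xi^{-j})$. The subtlety is that $\alpha^j$ is not simply a power of $\gamma$ but a symmetric combination; here I would expand $\alpha^j=(\gamma+\tau(\gamma))^j$ and argue that the Hopf-Galois action of $w$, being built from elements of $N=\langle\lambda(\sigma)\rangle$, interacts with the $\tau$-symmetrization in a controlled way. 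I expect the coefficients $\lambda_j$ to come out as $\lambda_j=z(\xi^j-\xi^{-j})$ up to sign and symmetrization, and these lie in $M$; that they actually lie in $\mathcal{O}_K$ follows by showing $\lambda_j$ is $\tau$-invariant (since $\tau(z)=-z$ and $\tau$ inverts $\sigma$, the product should be fixed) and integral (as $z,\xi\in\mathcal{O}_{\widetilde{L}}$).

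The main obstacle I anticipate is the interaction between the Hopf-Galois action of $w$ and the symmetrization defining $\alpha$. Unlike the Galois case, $w$ does not act as a field automorphism, so $w\cdot\alpha^j$ need not factor through the naive eigenvalue computation on $\gamma^j$; I must track carefully how $w$ acts on the $\tau$-components. The safest technical device is to work throughout in $\widetilde{L}$ with the explicit Greither--Pareigis action, use that $L$ sits as a $\tau$-fixed subspace on which $w$ restricts (because $w\in H=\widetilde{L}[N]^G$ and $H$ acts $K$-linearly on $L$), and then deduce that the restriction of $w$ to $L$ is diagonalized by the basis $\{\alpha^j\}_{j=0}^{p-1}$ precisely because each $\alpha^j$ projects onto a single $\sigma$-eigencomponent. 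Verifying that the eigenvalues land in $\mathcal{O}_K$ rather than merely in $\mathcal{O}_M$ is the cleanest part, resting on the $\tau$-invariance argument together with $z^2\in K$, so I would save the valuation computation $v_L(\alpha)=1$ and the descent $\alpha^p\in\mathcal{O}_K$ as the two places where linear disjointness is doing the essential work.
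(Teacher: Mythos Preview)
Your proposal has two genuine gaps that break the argument.

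First, the claim that $\widetilde{L}/L$ is unramified is false in this setting. The lemma sits in Section \ref{sectmaxram}, which inherits the standing hypothesis from Section \ref{sectarithm} that $\widetilde{L}/K$ is totally ramified; hence $\widetilde{L}/L$ is totally ramified of degree $2$ and $v_{\widetilde{L}}\!\restriction_L = 2v_L$, not $v_L$.

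Second, and more seriously, your candidate $\alpha=\gamma+\tau(\gamma)$ cannot be a uniformizer of $L$. Using $\sigma\tau=\tau\sigma^{-1}$ and $\tau(\xi)=\xi^{-1}$ one finds $\sigma(\tau(\gamma))=\xi\,\tau(\gamma)$, so $\gamma+\tau(\gamma)$ lies in the one-dimensional $\xi$-eigenspace $M\gamma$ of $\sigma$ acting $M$-linearly on $\widetilde{L}$. Write $\gamma+\tau(\gamma)=c\gamma$ with $c\in M$; then $v_{\widetilde{L}}(\gamma+\tau(\gamma))=p\,v_M(c)+1\equiv 1\pmod p$. But any nonzero element of $L$ has even $\widetilde{L}$-valuation, so $v_{\widetilde{L}}(\gamma+\tau(\gamma))\neq 2$. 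Concretely, either $c=0$ and your $\alpha$ vanishes, or $v_M(c)\geq 1$ and $v_L(\alpha)\geq\frac{p+1}{2}>1$. (In fact, reducing the norm condition $\tau(c-1)(c-1)=1$ modulo $\mathfrak{p}_M$ forces $c-1\equiv -1$, i.e.\ $\tau(\gamma)\equiv -\gamma$, so the leading terms really do cancel.)

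The paper avoids this by taking the \emph{product} $\alpha=\gamma\,\tau(\gamma)$ rather than the trace. Then $v_{\widetilde{L}}(\alpha)=1+1=2$ gives $v_L(\alpha)=1$ immediately, and $\alpha^p=\gamma^p\cdot\tau(\gamma^p)\in M$ since $\gamma^p\in M$; together with $\alpha^p\in L$ this yields $\alpha^p\in L\cap M=K$. For the eigenvector claim one computes $\sigma(\alpha^j)=\xi^{2j}\alpha^j$ directly, whence $w\cdot\alpha^j=z(\xi^{2j}-\xi^{-2j})\alpha^j$; the coefficient lies in $M$ a priori, but since $w\cdot\alpha^j\in L$ and $\alpha^j\in L^\times$, it lies in $L\cap M=K$. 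No expansion of a symmetrized power or delicate interaction with the Greither--Pareigis action is needed.
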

\begin{proof}
Let $\gamma$ be the element in the above discussion and define $\alpha=\gamma\tau(\gamma)$, where $\tau\in G$ is the automorphism in \eqref{presentG}. Since $\tau(\alpha)=\alpha$, $\alpha\in L$. Moreover, $\alpha^p=\gamma^p\tau(\gamma^p)$, which belongs to $M$ because $\gamma^p\in M$ and $\tau$ restricts to $M$. But also $\alpha^p\in L$ because $\alpha\in L$, so $\alpha^p\in L\cap M=K$. This proves the first statement.

Let us prove the second one. Since $\alpha=\gamma\tau(\gamma)$, we have $v_{\widetilde{L}}(\alpha)=2v_L(\alpha)=2$, so $\alpha$ is a uniformizer of $L$. On the other hand, $\tau(\xi)=\xi^{-1}$ because $\tau$ has order $2$ in $G$ and fixes $\alpha$. Then, $$\sigma^k\tau(\gamma)=\tau\sigma^{-k}(\gamma)=\tau(\xi^{-k}\gamma)=\xi^k\tau(\gamma)$$ for every $k$. Then \begin{equation*}
    \begin{split}
        w\cdot\alpha^j&=z(\sigma-\sigma^{-1})\cdot(\gamma^j\tau(\gamma)^j)\\&=z(\xi^{2j}\gamma^j\tau(\gamma)^j-\xi^{-2j}\gamma^j\tau(\gamma)^j)\\&=z(\xi^{2j}-\xi^{-2j})\alpha^j
    \end{split}
\end{equation*} for every $0\leq j\leq p-1$. Let us call $\lambda_j=z(\xi^{2j}-\xi^{-2j})$ for all these $j$. Then $w\cdot\alpha^j=\lambda_j\alpha^j$. To obtain the second statement, it remains to prove that $\lambda_j\in\mathcal{O}_K$. It is clear that they belong to $M$, but also $\lambda_j\alpha^j\in L$ because it is a result of the action of $H$ on $L$. Since $\alpha^j$ has an inverse in $L$, we obtain that $\lambda_j\in L$, whence $\lambda_j\in L\cap M=K$. Finally, it is clear that the $\lambda_j$ are algebraic integers.
\end{proof}

\begin{rmk}\normalfont\label{rmkeigminpoly} The elements $\lambda_j$ are the roots of the minimal polynomial $f$ of $w$ computed in Proposition \ref{w-poly}. Indeed, since $w\cdot\alpha^j=\lambda_j\alpha^j$ for every $0\leq j\leq p-1$ and $w$ operates $K$-linearly, we have $$0=f(w)\cdot\alpha^j=f(\lambda_j)\alpha^j,$$ whence $f(\lambda_j)=0$ for every $0\leq j\leq p-1$. Since $f$ has degree $p$, it has no other roots.
\end{rmk}

Since $w$ generates $H$ as a $K$-algebra, this result completely determines the action of $H$ on $L$. Now, we shall use the techniques described in Section \ref{sectredmethod} to study both the associated order and the $\mathfrak{A}_{L/K}$-freeness of $\mathcal{O}_L$.

\begin{pro}\label{proassocordermaxram} Let $\Lambda=(\lambda_j^i)_{i,j=0}^{p-1}$ and let $\Omega=(\omega_{ij})_{i,j=0}^{p-1}$ be its inverse matrix. Then the associated order $\mathfrak{A}_{L/K}$ of $\mathcal{O}_L$ in $H=K[w]$ has $\mathcal{O}_K$-basis $$v_i=\sum_{l=0}^{p-1}\omega_{li}w^l,\quad0\leq i\leq p-1.$$ Moreover, the action of this basis on $\mathcal{O}_L$ is given by $$v_i\cdot\alpha^j=\delta_{ij}\alpha^j.$$
\end{pro}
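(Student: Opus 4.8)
The plan is to feed the eigenvector description from Lemma~\ref{lemamaxram} into the reduction procedure of Section~\ref{sectredmethod}. First I would take as $K$-basis of $H=K[w]$ the powers $W=\{w^i\}_{i=0}^{p-1}$ and as $\mathcal{O}_K$-basis of $\mathcal{O}_L$ the powers $B=\{\alpha^j\}_{j=0}^{p-1}$; the latter is a genuine integral basis because $\alpha$ is a uniformizer of the totally ramified degree $p$ extension $L/K$ (we have $v_L(\alpha)=1$ by Lemma~\ref{lemamaxram}). Iterating the relation $w\cdot\alpha^j=\lambda_j\alpha^j$ gives $w^i\cdot\alpha^j=\lambda_j^i\alpha^j$, so the power basis simultaneously diagonalizes the whole action of $H$ and makes the coordinates trivial to record.

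With this, assembling the matrix $M(H,L)$ of Section~\ref{sectredmethod} is immediate: since $w^i\cdot\alpha^j=\lambda_j^i\alpha^j$ has $\alpha^k$-coordinate $\lambda_j^i\delta_{kj}$, each block $M_j(H,L)$ has a single nonzero row, the one indexed by $k=j$, equal to $(\lambda_j^i)_{i=0}^{p-1}$. Thus $M(H,L)$ has exactly $p$ nonzero rows, and collecting them produces the transpose of the Vandermonde matrix $\Lambda=(\lambda_j^i)_{i,j}$. I would then check that $\Lambda$ is invertible, so that this collected block is a legitimate reduced matrix: the eigenvalues $\lambda_j=z(\xi^{2j}-\xi^{-2j})$ are pairwise distinct for $0\leq j\leq p-1$, because $\lambda_j=\lambda_{j'}$ with $j\neq j'$ forces $\xi^{2(j+j')}=-1$, which is impossible as $p$ is odd; alternatively, invertibility is forced by the injectivity of $\rho_H$. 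A permutation matrix $U\in\mathcal{M}_{p^2}(\mathcal{O}_K)$ bringing the nonzero rows to the top then gives $UM(H,L)=\binom{D}{O}$ with $D$ the transpose of $\Lambda$, whence $D^{-1}$ is the transpose of $\Omega=\Lambda^{-1}$.

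Feeding $D^{-1}$ into Theorem~\ref{teoassocorder} reads off the stated $\mathcal{O}_K$-basis $\{v_i\}_{i=0}^{p-1}$ of $\mathfrak{A}_{L/K}$ directly from the entries of $\Omega$. For the action I would expand $v_i\cdot\alpha^j$ using $w^l\cdot\alpha^j=\lambda_j^l\alpha^j$; the scalar multiplying $\alpha^j$ is then the pairing of the $i$-th row of $\Omega$ with the column $(\lambda_j^l)_l$, which is the $j$-th column of $\Lambda$, that is, the $(i,j)$ entry of $\Omega\Lambda=\Lambda^{-1}\Lambda=I_p$, hence $\delta_{ij}$. This gives $v_i\cdot\alpha^j=\delta_{ij}\alpha^j$. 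I expect the only genuinely delicate point to be the bookkeeping when reading $D$ off the block structure of $M(H,L)$: one must keep careful track of the transpose so that the identity $\Omega\Lambda=I_p$ is applied on the correct side, and it is precisely this that forces the scalars to collapse to the Kronecker delta rather than to the entries of some other pairing.
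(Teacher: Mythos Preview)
Your proposal is correct and follows the same route as the paper: choose $W=\{w^i\}$ and $B=\{\alpha^j\}$, note that each block $M_j(H,L)$ has a single nonzero row $(\lambda_j^i)_i$, permute these rows to the top to obtain a reduced matrix, and read off the basis and the idempotent action via Theorem~\ref{teoassocorder}. The paper does exactly this, only more tersely; it does not spell out the Vandermonde invertibility check you add (the distinctness of the $\lambda_j$), and for the action it invokes the second clause of Theorem~\ref{teoassocorder} rather than expanding $v_i\cdot\alpha^j$ directly.

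One small caution on the transpose bookkeeping you yourself flag: the paper reads $\Lambda=(\lambda_j^i)_{i,j}$ with $j$ as the row index (its proof says ``the only non-zero row of $M_j(H,L)$ is the $j$-th row of $\Lambda$''), so for it the reduced matrix is $\Lambda$ itself and $D^{-1}=\Omega$, which yields $v_i=\sum_l\omega_{li}w^l$ exactly as stated. Under your (more standard) reading with $i$ as the row index, you correctly get $D=\Lambda^T$, hence $D^{-1}=\Omega^T$, and Theorem~\ref{teoassocorder} then produces $v_i=\sum_l\omega_{il}w^l$; your subsequent action computation using ``the $i$-th row of $\Omega$'' is consistent with this, and still gives $\delta_{ij}$ via $\Omega\Lambda=I$. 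So the two readings differ only by a global transpose on $\Lambda$ and $\Omega$, and both describe the same idempotents $v_i$; just be aware that your formula for $v_i$ would literally have the subscripts of $\omega$ swapped relative to the statement unless you adopt the paper's indexing convention for $\Lambda$.
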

\begin{proof}
We take the $\lambda_j\in \mathcal{O}_K$ and the uniformizer $\alpha\in\mathcal{O}_L$ from Lemma \ref{lemamaxram}. We fix the $K$-basis $W=\{w^i\}_{i=0}^{p-1}$ of $H$ and the $\mathcal{O}_K$-basis $\{\alpha^j\}_{j=0}^{p-1}$ of $\mathcal{O}_L$. Since $w^i\cdot\alpha^j=\lambda_j^i\alpha^j$, the $j$-th block $M_j(H,L)$ of the matrix of the action $M(H,L)$ is the matrix whose only non-zero row is the $j$-th one, which is $(\lambda_j^i)_{i=0}^{p-1}$. We may exchange rows so as to place the zero ones at the bottom of the matrix, and since these transformations preserve vectors of integers, $\Lambda$ is a reduced matrix of $M(H,L)$. Then, by Theorem \ref{teoassocorder}, a basis of the associated order is just as described in the statement. Moreover, we also obtain that the coordinates of $v_i\cdot\alpha^j$ are the result of multiplying $M_j(H,L)$ by the $i$-th column of $\Omega$. But the only non-zero row of $M_j(H,L)$ is the $j$-th row of $\Lambda$, which is the inverse of $\Omega$. Therefore, the result of this product is the $(i,j)$-entry of the identity matrix of order $n$, which implies that $v_i\cdot\alpha^j=\delta_{ij}\alpha^j$.
\end{proof}

\begin{coro}\label{coromaxram} Let us assume that $a=0$ and set $\theta=\sum_{j=0}^{p-1}\alpha^j$. Then $\mathfrak{A}_{L/K}=\mathfrak{A}_{\theta}$ and this is the maximal $\mathcal{O}_K$-order in $H$. In particular, $\mathcal{O}_L$ is $\mathfrak{A}_{L/K}$-free with generator $\theta$.
\end{coro}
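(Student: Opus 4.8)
The plan is to show that $\theta=\sum_{j=0}^{p-1}\alpha^j$ is an $H$-normal basis generator and that $\mathfrak{A}_{L/K}=\mathfrak{A}_{\theta}$, after which Proposition \ref{prohnormalfreeness} immediately gives the freeness with generator $\theta$. First I would verify that $\theta$ generates an $H$-normal basis. Using Proposition \ref{proassocordermaxram}, the associated order basis acts by $v_i\cdot\alpha^j=\delta_{ij}\alpha^j$, and since $H=K[w]$ with $\{w^i\}$ a $K$-basis, applying the $v_i$ to $\theta$ yields $v_i\cdot\theta=\sum_{j=0}^{p-1}\delta_{ij}\alpha^j=\alpha^i$. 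Thus the images $\{v_i\cdot\theta\}_{i=0}^{p-1}=\{\alpha^i\}_{i=0}^{p-1}$ are precisely the $\mathcal{O}_K$-basis of $\mathcal{O}_L$, so they are $K$-linearly independent and $\theta$ is indeed an $H$-normal basis generator.

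Next I would prove the inclusion $\mathfrak{A}_{\theta}\subseteq\mathfrak{A}_{L/K}$, which together with the trivial reverse inclusion $\mathfrak{A}_{L/K}\subseteq\mathfrak{A}_{\theta}$ noted after the definition gives equality. The key observation is that from $v_i\cdot\theta=\alpha^i$ we get $\mathfrak{A}_{L/K}\cdot\theta=\mathcal{O}_L$, so $\theta$ already generates $\mathcal{O}_L$ over $\mathfrak{A}_{L/K}$. By Proposition \ref{proatheta}(2), $\mathcal{O}_L=\mathfrak{A}_{\theta}\cdot\theta$ as well, and the map $\lambda\mapsto\lambda\cdot\theta$ is injective on $H$ (since $\theta$ is an $H$-normal basis generator). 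Hence the equality $\mathfrak{A}_{L/K}\cdot\theta=\mathcal{O}_L=\mathfrak{A}_{\theta}\cdot\theta$ forces $\mathfrak{A}_{L/K}=\mathfrak{A}_{\theta}$, because any $\lambda\in\mathfrak{A}_{\theta}$ satisfies $\lambda\cdot\theta\in\mathcal{O}_L=\mathfrak{A}_{L/K}\cdot\theta$, so $\lambda\cdot\theta=\mu\cdot\theta$ for some $\mu\in\mathfrak{A}_{L/K}$, and injectivity gives $\lambda=\mu\in\mathfrak{A}_{L/K}$.

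Finally I would argue that $\mathfrak{A}_{L/K}$ is the maximal $\mathcal{O}_K$-order in $H$. Since $\mathcal{O}_L=\mathfrak{A}_{L/K}\cdot\theta$ exhibits $\mathcal{O}_L$ as a free $\mathfrak{A}_{L/K}$-module of rank one, Proposition \ref{prohnormalfreeness} (equivalence of (1) and (2)) confirms the freeness. For maximality, I would note that the action of $H$ on $L$ diagonalizes in the basis $\{\alpha^j\}$ with eigenvalues $\lambda_j\in\mathcal{O}_K$; the associated order therefore consists of all $h\in H$ acting by integral eigenvalues on each $\alpha^j$, and the idempotent-like elements $v_i$ (which act as the projections onto the $\alpha^i$-component) lie in $\mathfrak{A}_{L/K}$. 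Because $H\cong K^p$ splits as a product of copies of $K$ under the eigenvalue map $w\mapsto(\lambda_0,\dots,\lambda_{p-1})$, and the $v_i$ form a complete set of orthogonal idempotents, $\mathfrak{A}_{L/K}=\bigoplus_i\mathcal{O}_K v_i\cong\mathcal{O}_K^p$ is the unique maximal order in this étale algebra. The main obstacle I anticipate is cleanly justifying this last maximality claim: one must confirm that the $\lambda_j$ are genuinely distinct (so that $H$ is étale and splits as claimed), which follows because the $v_i$ produced in Proposition \ref{proassocordermaxram} are orthogonal idempotents summing to $1$, forcing $H$ to be the split algebra $K^p$ whose unique maximal order is exactly $\bigoplus_i\mathcal{O}_K v_i=\mathfrak{A}_{L/K}$.
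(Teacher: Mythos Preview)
Your proof is correct and follows essentially the same route as the paper: compute $v_i\cdot\theta=\alpha^i$ to get $\mathfrak{A}_{L/K}\cdot\theta=\mathcal{O}_L=\mathfrak{A}_\theta\cdot\theta$, conclude $\mathfrak{A}_{L/K}=\mathfrak{A}_\theta$ by injectivity, and then use that the $v_i$ are a complete system of orthogonal idempotents to identify $H\cong K^p$ and $\mathfrak{A}_{L/K}\cong\mathcal{O}_K^p$ as the maximal order. The only point the paper makes more explicit is the verification that the $v_i$ are orthogonal idempotents, which it obtains from $(v_iv_j)\cdot\alpha^k=\delta_{ik}\delta_{jk}\alpha^k$ together with the bijectivity of the canonical map \eqref{mapj}; you invoke this implicitly, and it would be worth spelling out.
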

\begin{proof}
From Proposition \ref{proassocordermaxram} we see that $v_i\cdot\theta=\sum_{j=0}^{p-1}\delta_{ij}\alpha^j=\alpha^i$, and then $\{v_i\cdot\theta\}_{i=0}^{p-1}$ is an $\mathcal{O}_K$-basis of $\mathcal{O}_L$. Since $\{v_i\}_{i=0}^{p-1}$ is an $\mathcal{O}_K$-basis of $\mathfrak{A}_{L/K}$ and $\mathcal{O}_L=\mathfrak{A}_{\theta}\cdot\theta$, necessarily $\mathfrak{A}_{\theta}=\mathfrak{A}_{L/K}$.

Given $0\leq i,j,k\leq p-1$, by Proposition \ref{proassocordermaxram} we have $$(v_iv_j)\cdot\alpha^k=v_i\cdot(\delta_{jk}\alpha^k)=\delta_{ik}\delta_{jk}\alpha^k.$$ 
The bijectivity of the defining map \eqref{mapj} of the Hopf-Galois structure gives that $v_iv_j=\delta_{ij}v_j$, which proves that $\{v_i\}_{i=0}^{p-1}$ is a system of primitive pairwise orthogonal idempotents in $H$. In particular, the product of elements of $H$ written with respect to this system is component-wise. Hence, the map $$\begin{array}{rccl}
    \varphi\colon & H & \longrightarrow & K^p, \\
     & v_i & \longmapsto & e_i\coloneqq(\delta_{ij})_{j=0}^{p-1}.
\end{array}$$ is an isomorphism of $K$-algebras. Now, the maximal $\mathcal{O}_K$-order in $K^p$ is clearly $\mathcal{O}_K^p$, and its inverse image by $\varphi$ is the maximal $\mathcal{O}_K$-order in $H$. But this inverse image is just the set of all $\mathcal{O}_K$-linear combinations of the $v_i$, that is, $\mathfrak{A}_{L/K}$. Then $\mathfrak{A}_{L/K}$ is the maximal $\mathcal{O}_K$-order in $H$.

As for the last statement, it is well known that $\mathfrak{A}_{L/K}$ being the maximal $\mathcal{O}_K$-order in $H$ implies the $\mathfrak{A}_{L/K}$-freeness of $\mathcal{O}_L$ (for a proof, see \cite[Proposition 2.5.5]{trumanthesis}). Finally, that $\theta$ is a generator follows from Proposition \ref{prohnormalfreeness}.
\end{proof}

\begin{example}\normalfont Among the degree $3$ extensions of $\mathbb{Q}_3$ with dihedral degree $6$ normal closure, listed in Example \ref{examplepolymin}, the maximally ramified ones are defined by the radical polynomials $g(x)=x^3+3a$, $a\in\{1,4,7\}$. In each case, $g$ is $3$-Eisenstein, so a root $\alpha$ of $g$ is a uniformizer of $\mathcal{O}_L$. From Example \ref{examplepolymin} we know that $z=\sqrt{-3}$ and $w=z(\sigma-\sigma^{-1})$ has minimal polynomial $f(x)=x^3-9x$. Now, $f$ has roots $0$, $3$ and $-3$, and by Remark \ref{rmkeigminpoly}, these are the elements $\lambda_j$ such that $w\cdot\alpha^j=\lambda_j\alpha^j$. We already know that $w\cdot1=0$. Actually, it is checked without difficulty that $\lambda_1=-3$ and $\lambda_2=3$. Then, the blocks of the matrix $M(H,L)$ of the action are $$M_0(H,L)=\begin{pmatrix}
1 & 0 & 0 \\
0 & 0 & 0 \\
0 & 0 & 0
\end{pmatrix}, \quad M_1(H,L)=\begin{pmatrix}
0 & 0 & 0 \\
1 & -3 & 9 \\
0 & 0 & 0
\end{pmatrix}, \quad M_2(H,L)=\begin{pmatrix}
0 & 0 & 0 \\
0 & 0 & 0 \\
1 & 3 & 9 
\end{pmatrix}.$$ Removing the zero rows of $M(H,L)$ gives the reduced matrix $$\Lambda=\begin{pmatrix}
1 & 0 & 0 \\
1 & -3 & 9 \\
1 & 3 & 9 
\end{pmatrix},$$ with inverse $$\Omega=\frac{1}{18}\begin{pmatrix}
18 & 0 & 0 \\
0 & -3 & 3 \\
-2 & 1 & 1 
\end{pmatrix}.$$ Then, the associated order $\mathfrak{A}_{L/K}$ (and so the maximal $\mathcal{O}_K$-order in $H$) has $\mathcal{O}_K$-basis $$\Big\{\frac{18-2w^2}{18},\frac{-3w+w^2}{18},\frac{3w+w^2}{18}\Big\},$$ which is also a system of primitive pairwise orthogonal idempotents in $H$.
\end{example}

\section{The non-maximally ramified cases}\label{sectnonmaxram}

\subsection{Some generalities}\label{sectgener}

Let $L/K$ be a degree $p$ extension of $p$-adic fields with totally ramified dihedral normal closure $\widetilde{L}$. In this section, we assume that $t<\frac{2pe}{p-1}$, i.e. that $\widetilde{L}/K$ is not maximally ramified. We already know that this happens if and only if $p\nmid t$, and that this is also the same as assuming that $p$ does not divide $\ell$, that is, $a\neq0$.

Fix a uniformizer $\pi_L$ of $L$. Under the assumption on $a$, if we call $\theta=\pi_L^a$, we can obtain analogous statements to \cite[Proposition 6.8. (3),(4),(5)]{delcorsoferrilombardo}.

\begin{pro}\label{proarithmhopfgalois} Let $L/K$ be a separable degree $p$ extension with totally ramified dihedral normal closure and let $t$ be its ramification jump. Let $a$ be the remainder of $\ell$ mod $p$ and assume that $a\neq0$. Call $\theta=\pi_L^a$. Then:
\begin{itemize}
    \item[(1)] $v_L(w^i\cdot\theta)=a+i\ell$ for every $0\leq i\leq p-1$.
    \item[(2)] $\{w^i\cdot\theta\}_{i=0}^{p-1}$ is a $K$-basis of $H$, so $\theta$ generates an $H$-normal basis for $L/K$.
    \item[(3)] $v_L(w^p\cdot\theta)=ep+\frac{p^2+t}{2}+a$.
\end{itemize}
\end{pro}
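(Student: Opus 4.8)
\textbf{Proof proposal for Proposition \ref{proarithmhopfgalois}.}

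The plan is to derive all three statements from the valuation estimate in Proposition \ref{proraisevaluation} together with the fact that $a\neq0$ forces $p\nmid\ell$, which is the precise condition under which that estimate is an \emph{equality}. First I would prove (1) by induction on $i$. The base case $i=0$ is just $v_L(\theta)=v_L(\pi_L^a)=a$. For the inductive step, since $p\nmid\ell$ by assumption, Proposition \ref{proraisevaluation} gives $v_L(w\cdot x)=\ell+v_L(x)$ whenever $p\nmid v_L(x)$. The key point I must verify is that $p$ never divides $v_L(w^i\cdot\theta)=a+i\ell$ for $0\leq i\leq p-1$: indeed, if $p\mid a+i\ell$ then, reducing mod $p$ and using that $\ell$ is a unit mod $p$, one gets $i\equiv -a\ell^{-1}\pmod p$, and since $a+0\cdot\ell=a\not\equiv0$ (as $0<a<p$), the only forbidden residue is excluded for $i=0$; I would check that the residues $a+i\ell$ for $i=0,\dots,p-1$ run through all residues mod $p$ exactly once, so exactly one of them vanishes mod $p$, namely at $i=i_0$ with $i_0\equiv-a\ell^{-1}$. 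This is the one delicate arithmetic point, and I return to it below.

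Granting that the valuations $a+i\ell$ are the correct values, statement (2) is immediate: the elements $w^i\cdot\theta$ for $0\leq i\leq p-1$ have pairwise distinct valuations modulo $p$ (since $a+i\ell$ run through distinct residues mod $p$, and $v_L$ takes values in $\mathbb{Z}$ with $L/K$ totally ramified of degree $p$), hence they are $K$-linearly independent in $L$; as $\dim_K L=p=\dim_K H$ and $\rho_H$ is an isomorphism by the Hopf-Galois property, they form a $K$-basis of $L$, so $\theta$ is an $H$-normal basis generator. For statement (3), I would apply Proposition \ref{w-poly} to compute $w^p\cdot\theta=-\sum_{j=1}^{(p-1)/2}c_j z^{2j}\,(w^{p-2j}\cdot\theta)$. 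Here I must pass to valuations in $\widetilde{L}$, using $v_{\widetilde{L}}=2v_L$ on $L$, $v_{\widetilde{L}}(z)=p$, and $v_p(c_j)=1$ so that $v_K(c_j)=e$ and hence $v_{\widetilde{L}}(c_j)=2ep$ (via $v_{\widetilde{L}}(\pi_K)=2e\cdot\frac{p-1}{\,?\,}$—I would be careful to use the correct ramification index $v_{\widetilde{L}}(\pi_K)=2e\cdot p/?$; more cleanly, $v_K(c_j)=e$ and $e_{\widetilde{L}/K}=2p$ give $v_{\widetilde{L}}(c_j)=2pe$). Each summand then has $\widetilde{L}$-valuation $2pe+2pj+(2a+2(p-2j)\ell)=2pe+2a+2p\ell-2j(2\ell-p)=2pe+2a+2p\ell-2jt$, using $2\ell-p=t$. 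This is strictly decreasing in $j$, so the minimum is at $j=(p-1)/2$, giving $v_{\widetilde{L}}(w^p\cdot\theta)=2pe+2a+2p\ell-(p-1)t$; dividing by $2$ and substituting $\ell=(p+t)/2$ should collapse to $ep+\frac{p^2+t}{2}+a$.

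The main obstacle I anticipate is ensuring the minimum is attained at a \emph{single} $j$ with a coefficient of exact (not merely bounded) valuation, so that there is no cancellation and the valuation of the sum equals the minimum of the summand valuations. Since the summand valuations $2pe+2a+2p\ell-2jt$ are strictly decreasing in $j$ (as $t>0$), they are pairwise distinct, which guarantees no cancellation and pins down $v_{\widetilde{L}}(w^p\cdot\theta)$ exactly as the $j=(p-1)/2$ term; I would state this strict-monotonicity observation explicitly as the crux. The only other care needed is the bookkeeping between $v_L$ and $v_{\widetilde{L}}$ and the arithmetic identity $2\ell-p=t$, both of which are routine once set up. Finally, I would double-check the unit $j=0$ analysis in (1): because $0<a<p$ and $\ell$ is invertible mod $p$, the map $i\mapsto a+i\ell \bmod p$ is a bijection on $\{0,\dots,p-1\}$, so precisely one index $i_0$ gives $p\mid a+i_0\ell$; for that single index the equality in Proposition \ref{proraisevaluation} may fail, but the inductive bound $v_L(w^{i_0}\cdot\theta)\geq\ell+v_L(w^{i_0-1}\cdot\theta)=a+i_0\ell$ combined with the reverse divisibility argument (the value must be $\equiv a+i_0\ell\equiv0$ and at least $a+i_0\ell$) still forces equality, so statement (1) holds for all $i$ in the stated range.
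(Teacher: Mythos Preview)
Your approach is essentially the paper's: induction via Proposition~\ref{proraisevaluation} for (1), distinct residues mod $p$ for (2), and the minimum-valuation term in the expansion of $w^p$ from Proposition~\ref{w-poly} for (3). Your handling of (3) is in fact slightly more explicit than the paper's, since you note that the summand valuations $pe+a+p\ell-jt$ are strictly decreasing in $j$ (as $t>0$), hence pairwise distinct, so no cancellation occurs.

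One correction in (1): you overcomplicate the ``bad index'' analysis. Since $\ell\equiv a\pmod p$, one has $a+i\ell\equiv a(i+1)\pmod p$, which vanishes \emph{only} at $i=p-1$. The inductive step from $i$ to $i+1$ requires $p\nmid v_L(w^i\cdot\theta)=a+i\ell$ only for $0\le i\le p-2$, and this always holds; the induction therefore runs cleanly through $i=p-1$ with no special case needed. Your ``reverse divisibility'' patch (asserting that $v_L(w^{i_0}\cdot\theta)$ must be $\equiv 0\pmod p$) is not justified---nothing forces that congruence a priori---but it is also unnecessary once you observe $i_0=p-1$.
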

\begin{proof}
\begin{itemize}
    \item[(1)] It follows from an easy induction on $i$ applying Proposition \ref{proraisevaluation} at each step, using the fact that $a+i\ell$ is not divisible by $p$ for every $0\leq i\leq p-2$.
    \item[(2)] It is immediate from the fact that the valuations of $w^i\cdot\theta$ are all different mod $p$, which follows from (1).
    \item[(3)] Let $e$ be the absolute ramification index of $K$. Taking valuations in the expression of $w^p$ in Proposition \ref{w-poly}, we obtain \begin{equation*}
        \begin{split}
            v_L(w^p\cdot\theta)&=\mathrm{min}_{1\leq j\leq \frac{p-1}{2}}\Big(ep+a+p\ell-jt\Big)\\&=ep+a+p\ell-\frac{p-1}{2}t\\&=ep+a+\frac{p^2+t}{2}.
        \end{split}
    \end{equation*}
\end{itemize}
\end{proof}

In analogy with the cyclic case (see \cite[Definition 6.9]{delcorsoferrilombardo}), we define \begin{equation}\label{defnui}
    \nu_i=\Big\lfloor\frac{a+i\ell}{p}\Big\rfloor,\quad0\leq i\leq p-1.
\end{equation} We will indistinctly use this definition or the following alternative expression: \begin{equation}\label{defaltnui}
    \nu_i=\Big\lfloor\frac{a+i(pa_0+a)}{p}\Big\rfloor=ia_0+\Big\lfloor(i+1)\frac{a}{p}\Big\rfloor.
\end{equation}

Clearly the sequence $\{\nu_i\}_{i=0}^{p-1}$ is increasing. Some basic properties of the numbers $\nu_i$ are listed in the following:

\begin{pro}\label{lemaparam} The following statements hold:
\begin{itemize}
    \item[(1)] $\nu_{p-1}=a+(p-1)a_0$, where $a_0=\lfloor\frac{\ell}{p}\rfloor$.
    \item[(2)] $\nu_{p-1}\leq e+\frac{p-1}{2}$.
    \item[(3)] $p\Big(e+\frac{p-1}{2}-\nu_{p-1}\Big)+a=\frac{p-1}{2}\Big(\frac{2pe}{p-1}-t\Big)$.
    \item[(4)] $\nu_{p-1}=e+\frac{p-1}{2}$ if and only if $t\geq\frac{2pe}{p-1}-2$.
    \item[(5)] Given $0\leq k\leq p-3$, $\nu_{k+2}-\nu_k\geq1$. Moreover, if $a_0>0$, then $\nu_{k+2}-\nu_k\geq2$.
    \item[(6)] $\nu_{p-1}-\nu_{p-2}$ and $\nu_{p-2}-\nu_{p-3}$ are lower bounded by $1$.
    \item[(7)] $\nu_s<e+\Big\lceil\frac{s}{2}\Big\rceil$ for all $0\leq s\leq p-2$.
\end{itemize}
\end{pro}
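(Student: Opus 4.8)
The plan is to reduce all six parts to elementary manipulations of the floor function, using throughout the alternative expression $\nu_i=ia_0+\lfloor(i+1)\tfrac{a}{p}\rfloor$ recorded above, together with the identities $\ell=pa_0+a$ and $t=2\ell-p=p(2a_0-1)+2a$. Part (1) is then immediate, since $\nu_{p-1}=(p-1)a_0+\lfloor p\tfrac{a}{p}\rfloor=(p-1)a_0+a$. The cornerstone for (2), (3) and (6) will be the single inequality
\begin{equation}\tag{$\star$}
(p-1)(2a_0-1)+2a\le 2e,
\end{equation}
which I would establish first. Writing $t=p(2a_0-1)+2a$, the hypothesis $t<\frac{2pe}{p-1}$ rearranges to $2e>(p-1)(2a_0-1)+2a-\tfrac{2a}{p}$. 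The number $M\coloneqq(p-1)(2a_0-1)+2a$ is \emph{even} (the factor $p-1$ is even, and $2a$ is even), while $0<\tfrac{2a}{p}<2$ because $1\le a\le p-1$; hence $2e$ is an even integer strictly exceeding $M-\tfrac{2a}{p}\in(M-2,M)$, which forces $2e\ge M$, i.e. $(\star)$. Since $2\nu_{p-1}=M+(p-1)$, the inequality $(\star)$ is exactly statement (2).

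For (3) I would use the identity $pM=2a+(p-1)t$, obtained from (1) in the form $p\nu_{p-1}=a+(p-1)\ell$ together with $2\ell=p+t$. It turns the condition $t\ge\frac{2pe}{p-1}-2$ into $M\ge 2e-2+\tfrac{2+2a}{p}$. Because $M$ is even and $M\le 2e$ by $(\star)$, this last condition holds exactly when $M=2e$: the value $M=2e$ works since $a\le p-1$, whereas any $M\le 2e-2$ fails. As $M=2e$ is equivalent to equality $\nu_{p-1}=e+\tfrac{p-1}{2}$, this proves (3).

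Parts (4) and (5) I would settle by direct computation with the floor formula. For (4), $\nu_{k+2}-\nu_k=2a_0+\big(\lfloor(k+3)\tfrac{a}{p}\rfloor-\lfloor(k+1)\tfrac{a}{p}\rfloor\big)$, and the bracketed difference is $\ge\lfloor\tfrac{2a}{p}\rfloor\ge 0$ by superadditivity of the floor; this already gives $\ge 2a_0$, hence the ``moreover'' clause when $a_0>0$. When $a_0=0$ one has $\ell=a$, so $t=2a-p\ge 1$ forces $a>\tfrac{p}{2}$, whence $\tfrac{2a}{p}>1$ and the bracket is $\ge 1$, giving the bound $1$. For (5), the same mechanism applies to consecutive differences: one computes $\nu_{p-1}-\nu_{p-2}=a_0+1$ directly, and $\nu_{p-2}-\nu_{p-3}=a_0+\big(\lfloor(p-1)\tfrac{a}{p}\rfloor-\lfloor(p-2)\tfrac{a}{p}\rfloor\big)$, where the bracket is nonnegative (so the difference is $\ge a_0$), and in the remaining case $a_0=0$ the inequality $a>\tfrac{p}{2}$ again makes the bracket equal to $1$.

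The main obstacle is statement (6), which I expect to require the most care. I would first strip away the floors: since $\nu_s\le\frac{a+s\ell}{p}$ and $e+\lceil\tfrac{s}{2}\rceil\in\mathbb{Z}$, it suffices to prove $\frac{a+s\ell}{p}<e+\lceil\tfrac{s}{2}\rceil$, which, writing $\epsilon\coloneqq 2\lceil\tfrac{s}{2}\rceil-s\in\{0,1\}$ for the parity of $s$, is equivalent to
\[
2a+st<2pe+p\epsilon.
\]
Substituting $t=p(2a_0-1)+2a$ rewrites the left-hand side as $2a(s+1)+sp(2a_0-1)$, and bounding $2pe$ below by means of $(\star)$ reduces the whole inequality, after factoring, to
\[
(s+1-p)\,t<p\epsilon.
\]
This holds for every $0\le s\le p-2$, since then $s+1-p\le -1<0$ while $t\ge 1>0$, so the left-hand side is negative and $p\epsilon\ge 0$; thus (6) follows uniformly in the parity of $s$ from $(\star)$ and $t\ge 1$. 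The one genuinely delicate point in the whole proposition is the parity argument securing $(\star)$: it is what upgrades the strict but ``off-by-a-fraction'' estimate coming from $t<\frac{2pe}{p-1}$ to the clean integral inequality on which (2), (3) and (6) all rest.
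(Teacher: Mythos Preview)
Your proof is correct. It differs from the paper's in a way worth noting. The paper proves (2) and (3) by direct manipulation of the floor of $\tfrac{a+(p-1)\ell}{p}$, and then deduces (6) by a two-step descent: first (5) gives $\nu_{p-2}<e+\tfrac{p-1}{2}$ and $\nu_{p-3}<e+\tfrac{p-3}{2}$, and then repeated application of (4) pushes these bounds down to arbitrary $s$, splitting on the parity of $s$. Your route is more unified: the single integral inequality $(\star)$, secured by the parity observation that $M$ is even, carries all of (2), (3) and (6) at once, and (6) comes out as the clean factored inequality $(s+1-p)t<p\epsilon$ without any induction or appeal to (4)–(5). One small stylistic point: when you say that $(\star)$ ``reduces'' the inequality in (6) to $(s+1-p)t<p\epsilon$, you are actually passing to a \emph{sufficient} condition (since $2pe\ge pM$ only gives one direction); the logic is sound, but ``it suffices to show'' would be more accurate than ``reduces to''. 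Parts (1), (4), (5) are handled essentially as in the paper.
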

\begin{proof}
\begin{itemize}
    \item[(1)] It is enough to replace $\ell=pa_0+a$ in the definition of $\nu_{p-1}$.
    \item[(2)] Since $t\leq\frac{2pe}{p-1}$, we have $$\ell\leq\frac{p+\frac{2pe}{p-1}}{2}=\frac{p(p-1)+2pe}{2(p-1)}.$$ Then, $$\nu_{p-1}=\Big\lfloor\frac{a+(p-1)\ell}{p}\Big\rfloor\leq\Big\lfloor\frac{a+\frac{p(p-1+2e)}{2}}{p}\Big\rfloor=\Big\lfloor\frac{a}{p}+\frac{p-1}{2}+e\Big\rfloor=e+\frac{p-1}{2}.$$
    \item[(3)] We check the validity of the identity directly: \begin{equation*}
        \begin{split}
            p\Big(e+\frac{p-1}{2}-\nu_{p-1}\Big)+a&=pe+\frac{p-1}{2}p-pa-p(p-1)a_0+a\\&=pe+\frac{p-1}{2}p-(p-1)a-p(p-1)a_0\\&=pe+\frac{p-1}{2}p-(p-1)\ell\\&=pe+\frac{p-1}{2}(p-2\ell)=pe-\frac{p-1}{2}t=\frac{p-1}{2}\Big(\frac{2pe}{p-1}-t\Big).
        \end{split}
    \end{equation*}
    \item[(4)] This statement follows easily from the identity in (3). If $\nu_{p-1}=e+\frac{p-1}{2}$, we have that $t=\frac{2pe}{p-1}-\frac{2a}{p-1}\geq\frac{2pe}{p-1}-2$ because $a\leq p-1$. Conversely, if $t\geq\frac{2pe}{p-1}-2$, then $p\Big(e+\frac{p-1}{2}-\nu_{p-1}\Big)+a\leq p-1$, which implies that $\nu_{p-1}=e+\frac{p-1}{2}$.
    \item[(5)] Since $\nu_i=ia_0+\lfloor(i+1)\frac{a}{p}\rfloor$ for every $0\leq i\leq p-1$, the second part of the statement is trivial. Assume that $a_0=0$. We have that $$\nu_{k+2}-\nu_k=\Big\lfloor(k+3)\frac{a}{p}\Big\rfloor-\Big\lfloor(k+1)\frac{a}{p}\Big\rfloor.$$ Since $a_0=0$, $a=\frac{p+t}{2}\geq\frac{p+1}{2}$, so $2a\geq p+1$. Then, $(k+3)\frac{a}{p}-(k+1)\frac{a}{p}=\frac{2a}{p}>1$. Necessarily, $\nu_{k+2}-\nu_k\geq1$.
    \item[(6)] If $a_0>0$, the statement is trivial. Assume that $a_0=0$. Immediately we obtain that $\nu_{p-1}=a$ and $\nu_{p-2}=\Big\lfloor a-\frac{a}{p}\Big\rfloor=a-1$. Finally, $\nu_{p-3}=\Big\lfloor a-\frac{2a}{p}\Big\rfloor$, and since $a_0=0$, $p+1\leq 2a\leq 2(p-1)$, so $\nu_{p-3}=a-2$.
    \item[(7)] Using (5), we obtain that $\nu_{p-2}<\nu_{p-1}\leq e+\frac{p-1}{2}$ and $\nu_{p-3}<\nu_{p-1}-1\leq e+\frac{p-3}{2}$. If $s$ is odd, using successively (4) gives $\nu_{p-2}-\nu_s\geq\frac{p-2-s}{2}$, so $$\nu_s\leq\nu_{p-2}-\frac{p-2-s}{2}<e+\frac{p-1}{2}-\frac{p-2-s}{2}=e+\frac{s+1}{2}.$$ If $s$ is even, similarly we obtain $$\nu_s\leq\nu_{p-3}-\frac{p-3-s}{2}<e+\frac{p-3}{2}-\frac{p-3-s}{2}=e+\frac{s}{2}.$$ This completes the proof.
\end{itemize}
\end{proof}

\begin{rmk}\normalfont
The quantity in Proposition \ref{lemaparam} (3) is the precision of the scaffold on $L/K$ introduced by Elder in \cite{elder}, see Section \ref{sectfreenessnonmax} below.
\end{rmk}

We see that the numbers $\nu_i$ have some meaningful differences with respect to the cyclic case. The upper bound is now $e+\frac{p-1}{2}$ instead of $e$ (see \cite[Lemma 6.11]{delcorsoferrilombardo}). This number is not new in this paper: it is just the power of $\pi_K$ in the coefficient of $w^m$ in the expression of $w^{p-1+m}$ in Proposition \ref{w-highpoly}.
Another difference is that the sequence $\nu_0\leq\nu_1\leq\dots\leq\nu_{p-1}$ increases rapidly, in the sense that there cannot be two consecutive equalities. These two facts will play an important role to prove Theorem \ref{maintheorem} (4). The other difference is that now the upper bound $e+\frac{p-1}{2}$ for $\nu_{p-1}$ is reached exactly in the cases in which $t\geq\frac{2pe}{p-1}-2$, so now the almost maximally ramified condition $t\geq\frac{2pe}{p-1}-1$ does not seem to be important for the behaviour of $\mathcal{O}_L$ as $\mathfrak{A}_{L/K}$-module.

Next, we proceed to describe $\mathfrak{A}_{\theta}$ in terms of $\{\nu_i\}_{i=0}^{p-1}$ as in the cyclic case.

\begin{pro}\label{propintbasisL} Assume that $a\neq0$ and let $\theta=\pi_L^a$. Then, $\mathfrak{A}_{\theta}$ is $\mathcal{O}_K$-free with $\mathcal{O}_K$-basis $$\{\pi_L^{-\nu_i}w^i\}_{i=0}^{p-1}.$$ Consequently, $\{\pi_L^{-\nu_i}w^i\cdot\theta\}_{i=0}^{p-1}$ is an integral basis for $L/K$.
\end{pro}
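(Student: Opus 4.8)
The plan is to compute $\mathfrak{A}_{\theta}$ explicitly by expanding a general element of $H$ in the basis $\{w^i\}_{i=0}^{p-1}$ and testing membership valuation by valuation. I would write $h=\sum_{i=0}^{p-1}c_iw^i\in H$ with $c_i\in K$; then $h\cdot\theta=\sum_{i=0}^{p-1}c_i\,(w^i\cdot\theta)$, and Proposition \ref{proarithmhopfgalois}(1) gives $v_L(w^i\cdot\theta)=a+i\ell$. The key observation is that, since $a\neq 0$ forces $p\nmid\ell$, the residues of $a+i\ell$ modulo $p$ are pairwise distinct as $i$ runs from $0$ to $p-1$ (multiplication by $\ell$ permutes $\mathbb{Z}/p\mathbb{Z}$). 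Hence the summands $c_i(w^i\cdot\theta)$ have pairwise distinct valuations $p\,v_K(c_i)+a+i\ell$, and $v_L(h\cdot\theta)$ equals their minimum.

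Consequently $h\cdot\theta\in\mathcal{O}_L$ if and only if every summand is integral, i.e. $p\,v_K(c_i)+a+i\ell\geq 0$ for each $i$. As $v_K(c_i)\in\mathbb{Z}$, this inequality is equivalent to $v_K(c_i)\geq-\lfloor(a+i\ell)/p\rfloor=-\nu_i$ with $\nu_i$ as in \eqref{defnui}, that is $c_i\in\pi_K^{-\nu_i}\mathcal{O}_K$. I would thus conclude
$$\mathfrak{A}_{\theta}=\bigoplus_{i=0}^{p-1}\pi_K^{-\nu_i}\mathcal{O}_K\,w^i,$$
so that $\{\pi_K^{-\nu_i}w^i\}_{i=0}^{p-1}$ is the desired $\mathcal{O}_K$-basis; these elements genuinely lie in $H=K[w]$ (the scalars being powers of a uniformizer of $K$) and are $K$-linearly independent because the $w^i$ are.

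For the final assertion I would transport this basis through the $\mathcal{O}_K$-module isomorphism $\varphi\colon\mathfrak{A}_{\theta}\to\mathcal{O}_L$, $\lambda\mapsto\lambda\cdot\theta$, of Proposition \ref{proatheta}(2): it sends $\{\pi_K^{-\nu_i}w^i\}$ to $\{\pi_K^{-\nu_i}(w^i\cdot\theta)\}$, which is therefore an $\mathcal{O}_K$-basis of $\mathcal{O}_L$, i.e. an integral basis for $L/K$. As a check, these elements have valuations $a+i\ell-p\nu_i\in\{0,\dots,p-1\}$, which run over a complete residue system modulo $p$ exactly because $p\nmid\ell$.

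I do not anticipate a serious obstacle: the argument rests entirely on Proposition \ref{proarithmhopfgalois}(1) and on the fact that distinct valuations modulo $p$ decouple the single condition $h\cdot\theta\in\mathcal{O}_L$ into one condition per coefficient. The only delicate step is converting $p\,v_K(c_i)+a+i\ell\geq 0$ into $v_K(c_i)\geq-\nu_i$, where integrality of $v_K(c_i)$ is what turns the bound into the floor $\nu_i=\lfloor(a+i\ell)/p\rfloor$; everything else is routine.
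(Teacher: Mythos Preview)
Your proof is correct and follows essentially the same route as the paper: expand $h$ in the $K$-basis $\{w^i\}$, use Proposition \ref{proarithmhopfgalois}(1) together with the fact that the valuations $a+i\ell$ are pairwise distinct modulo $p$ to decouple the integrality condition termwise, and then pass to the floor to obtain $v_K(c_i)\geq-\nu_i$. Your invocation of Proposition \ref{proatheta}(2) for the integral basis is a slightly more explicit way of saying what the paper records as $\mathcal{O}_L=\mathfrak{A}_{\theta}\cdot\theta$, and you correctly write $\pi_K^{-\nu_i}$ (the paper's $\pi_L^{-\nu_i}$ in the displayed statement is a typographical slip).
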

\begin{proof}
Let $h\in H$, so that $h=\sum_{i=0}^{p-1}h_iw^i$ for unique $h_0,\dots,h_{p-1}\in K$. Then, $h\cdot\theta\in\mathcal{O}_L$ if and only if $$\sum_{i=0}^{p-1}h_iw^i\cdot\theta\in\mathcal{O}_L.$$ Now, by Proposition \ref{proarithmhopfgalois} (1), the valuations of the terms $w^i\cdot\theta$ are all diferent, so the condition above happens if and only if $h_iw^i\cdot\theta\in\mathcal{O}_L$ for all $0\leq i\leq p-1$. Since $v_L(w^i\cdot\theta)=a+i\ell$, this is equivalent to $v_L(h_i)\geq-\frac{a+i\ell}{p}$, which at the same time is equivalent to $v_L(h_i)\geq-\nu_i$. The last statement follows from the fact that $\mathcal{O}_L=\mathfrak{A}_{\theta}\cdot\theta$.
\end{proof}

This result is the analogue of \cite[Proposition 2.a)]{ferton1972} and \cite[Theorem 6.10]{delcorsoferrilombardo}.

\subsection{Determination of the associated order}

In this part we will prove Theorem \ref{maintheorem} (2). We will see that we can translate the proof of Theorem \ref{cyclicpfreeness} (2) in \cite[Section 7]{delcorsoferrilombardo} to our case. For each $0\leq i\leq p-1$, let us denote \begin{equation*}
    \begin{split}
        n_i&=\mathrm{min}_{0\leq j\leq p-1-i}(\nu_{i+j}-\nu_j)\\&=ia_0+\mathrm{min}_{0\leq j\leq p-1-i}\Big(\Big\lfloor(i+j+1)\frac{a}{p}\Big\rfloor-\Big\lfloor(j+1)\frac{a}{p}\Big\rfloor\Big).
    \end{split}
\end{equation*}

It is clear that $n_i\leq \nu_i$ for all $0\leq i\leq p-1$. We start with the following result, which is the analogue of \cite[Corollary 7.1]{delcorsoferrilombardo}.

\begin{pro}\label{propineqactionsep} Let $0\leq i,j\leq p-1$ such that $i+j\geq p$. Then, $$(\pi_K^{-\nu_i}w^i)\cdot(\pi_K^{-\nu_j}w^j\cdot\theta)\in\mathcal{O}_L.$$
\end{pro}
\begin{proof}
First, we have that $$(\pi_K^{-\nu_i}w^i)\cdot(\pi_K^{-\nu_j}w^j\cdot\theta)=\pi_K^{-\nu_i-\nu_j}w^{i+j-p}w^p\cdot\theta.$$ Following Proposition \ref{proarithmhopfgalois}, we compute $$v_L(w^{i+j-p}w^p\cdot\theta)\geq v_L(w^p\cdot\theta)+(i+j-p)\frac{p+t}{2}=ep+\frac{p^2+t}{2}+a+(i+j-p)\frac{p+t}{2}.$$ Then, \begin{equation*}
    \begin{split}
        v_L((\pi_K^{-\nu_i}w^i)\cdot(\pi_K^{-\nu_j}w^j\cdot\theta))&\geq(-\nu_i-\nu_j)p+ep+\frac{p^2+t}{2}+a+(i+j-p)\frac{p+t}{2}\\&\geq-a-i\frac{p+t}{2}-a-j\frac{p+t}{2}+ep+\frac{p^2+t}{2}+a+(i+j-p)\frac{p+t}{2}\\&=ep-\frac{p-1}{2}t-a.
    \end{split}
\end{equation*} Using Proposition \ref{lemaparam} (3), we see that $ep-\frac{p-1}{2}t-a=e+\frac{p-1}{2}-\nu_{p-1}$, which is known to be non-negative from Proposition \ref{lemaparam} (2).
\end{proof}

Now, we proceed to restate and proof Theorem \ref{maintheorem} (2).

\begin{pro}\label{probasisassocorder} Let $L/K$ be a degree $p$ extension of $p$-adic fields with dihedral normal closure $\widetilde{L}$ such that $\widetilde{L}/K$ is not maximally ramified. Then, $\mathfrak{A}_{L/K}$ is $\mathcal{O}_K$-free and has $\mathcal{O}_K$-basis $\{\pi_K^{-n_i}w^i\}_{i=0}^{p-1}$.
\end{pro}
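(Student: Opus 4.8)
The plan is to use Lemma~\ref{lemaassoccoord} to decouple the coordinates of $\mathfrak{A}_{L/K}$ and then to pin down, for each $0\leq i\leq p-1$, the exact power of $\pi_K$ by which $w^i$ must be divided in order to land in $\mathfrak{A}_{L/K}$. Lemma~\ref{lemaassoccoord} asserts that $h=\sum_{i=0}^{p-1}h_iw^i$ lies in $\mathfrak{A}_{L/K}$ if and only if each $h_iw^i$ does, so
\[
\mathfrak{A}_{L/K}=\bigoplus_{i=0}^{p-1}\big(\mathfrak{A}_{L/K}\cap Kw^i\big).
\]
Each summand is a nonzero finitely generated $\mathcal{O}_K$-submodule of the line $Kw^i$, hence of the shape $\pi_K^{-m_i}\mathcal{O}_K\,w^i$ for a unique integer $m_i$; it therefore suffices to prove $m_i=n_i$, and this simultaneously yields both the $\mathcal{O}_K$-freeness and the claimed basis. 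Throughout I would test membership in $\mathfrak{A}_{L/K}$ against the integral basis $\{\pi_K^{-\nu_j}w^j\cdot\theta\}_{j=0}^{p-1}$ of $\mathcal{O}_L$ (Proposition~\ref{propintbasisL}), so that $cw^i\in\mathfrak{A}_{L/K}$ is equivalent to $c\,\pi_K^{-\nu_j}w^{i+j}\cdot\theta\in\mathcal{O}_L$ for every $0\leq j\leq p-1$.

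First I would prove $m_i\geq n_i$, i.e. $\pi_K^{-n_i}w^i\in\mathfrak{A}_{L/K}$, by checking the action on each basis vector. When $i+j\leq p-1$, Proposition~\ref{proarithmhopfgalois}~(1) gives $v_L(w^{i+j}\cdot\theta)=a+(i+j)\ell\geq p\,\nu_{i+j}$, so
\[
v_L\big(\pi_K^{-n_i-\nu_j}\,w^{i+j}\cdot\theta\big)\geq p\big(\nu_{i+j}-\nu_j-n_i\big)\geq0,
\]
the last inequality holding because $n_i\leq\nu_{i+j}-\nu_j$ by the defining minimum for $n_i$. When $i+j\geq p$ the naive estimate is unavailable, and here I would use $n_i\leq\nu_i$ to write $\pi_K^{-n_i}w^i=\pi_K^{\nu_i-n_i}\big(\pi_K^{-\nu_i}w^i\big)$ with $\nu_i-n_i\geq0$; the element in question then equals $\pi_K^{\nu_i-n_i}$ times $\big(\pi_K^{-\nu_i}w^i\big)\cdot\big(\pi_K^{-\nu_j}w^j\cdot\theta\big)$, which lies in $\mathcal{O}_L$ by Lemma~\ref{propineqactionsep}, and multiplication by the integer $\pi_K^{\nu_i-n_i}\in\mathcal{O}_K$ preserves integrality.

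Conversely I would prove $m_i\leq n_i$ by exhibiting a single obstruction. Pick an index $j_0$ with $0\leq j_0\leq p-1-i$ realizing the minimum, so that $n_i=\nu_{i+j_0}-\nu_{j_0}$ and, crucially, $i+j_0\leq p-1$. If $c\in K$ satisfies $cw^i\in\mathfrak{A}_{L/K}$, then integrality of $c\,\pi_K^{-\nu_{j_0}}w^{i+j_0}\cdot\theta$ together with Proposition~\ref{proarithmhopfgalois}~(1) forces
\[
p\,v_K(c)-p\,\nu_{j_0}+a+(i+j_0)\ell\geq0,
\]
so that $v_K(c)\geq\nu_{j_0}-\tfrac{a+(i+j_0)\ell}{p}$; since $v_K(c)\in\mathbb{Z}$ and $\nu_{i+j_0}=\lfloor\frac{a+(i+j_0)\ell}{p}\rfloor$, taking integer parts gives $v_K(c)\geq\nu_{j_0}-\nu_{i+j_0}=-n_i$. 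Hence $m_i\leq n_i$, and combining with the previous step $m_i=n_i$.

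Assembling the lines $\mathfrak{A}_{L/K}\cap Kw^i=\pi_K^{-n_i}\mathcal{O}_K\,w^i$ over $0\leq i\leq p-1$ then produces the asserted $\mathcal{O}_K$-basis $\{\pi_K^{-n_i}w^i\}_{i=0}^{p-1}$, establishing the proposition. The only genuinely delicate point is the case $i+j\geq p$ in the first step: there one cannot estimate valuations directly and must appeal to the cancellation packaged in Lemma~\ref{propineqactionsep} (resting on Proposition~\ref{w-poly}), which is precisely where the hypothesis that $\widetilde{L}/K$ is not maximally ramified enters. Everything else is a bookkeeping of the floors defining $\nu_i$ and $n_i$.
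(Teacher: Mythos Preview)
Your proof is correct and follows essentially the same route as the paper: reduce to monomials via Lemma~\ref{lemaassoccoord}, test against the integral basis of Proposition~\ref{propintbasisL}, handle $i+j\leq p-1$ by the direct valuation formula from Proposition~\ref{proarithmhopfgalois}~(1), and absorb the case $i+j\geq p$ using Lemma~\ref{propineqactionsep} together with $n_i\leq\nu_i$. The only difference is presentational: you split the argument into two inequalities $m_i\geq n_i$ and $m_i\leq n_i$ and make the use of $n_i\leq\nu_i$ explicit, whereas the paper runs the same computation as a single ``if and only if'' and leaves that inequality implicit.
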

\begin{proof}
Many of the details will be omitted, since we essentially follow the arguments in the proof of \cite[Theorem 7.2]{delcorsoferrilombardo}. Let us take $h=\sum_{i=0}^{p-1}h_iw^i\in H$ with $h_i\in K$. As usual, call $\theta=\pi_L^a$.

Assume that $h\in \mathfrak{A}_{L/K}$, so that $h\cdot\theta\in\mathcal{O}_L$. Using Proposition \ref{proarithmhopfgalois} (1), we see that the valuations of the elements $h_iw^i\cdot\theta$, $0\leq i\leq p-1$, are all distinct mod $p$. Hence each of these elements belongs to $\mathcal{O}_L$, which gives $v_K(h_i)\geq-\nu_i$ for every $0\leq i\leq p-1$. On the other hand, for every $0\leq j\leq p-1$ we have that $$h\cdot(\pi_K^{-\nu_j}w^j\cdot\theta)=\sum_{i=0}^{p-1-j}h_i\pi_K^{-\nu_j}w^{i+j}\cdot\theta+\sum_{i=p-j}^{p-1}h_i\pi_K^{-\nu_j}w^{i+j}\cdot\theta\in\mathcal{O}_L,$$ (where the second sum only appears if $j\geq 1$). The second sum belongs to $\mathcal{O}_L$ by Proposition \ref{propineqactionsep}, so the first one also does. Again, the valuations of its summands are all distinct mod $p$, obtaining that $h_i\pi_K^{-\nu_j}w^{i+j}\cdot\theta\in\mathcal{O}_L$ whenever $i+j\leq p-1$. Taking valuations, this gives that $\nu_{i+j}-\nu_j\geq-v_K(h_i)$ for all $0\leq j\leq p-1-i$, which is easily seen to be equivalent to $v_K(h_i)\geq-n_i$ for all $0\leq i\leq p-1$.

Conversely, if $v_K(h_i)\geq-n_i$ for all $0\leq i\leq p-1$, we can go back in the previous argument to show that $h_i\pi_K^{-\nu_j}w^{i+j}\cdot\theta\in\mathcal{O}_L$ for all $j$ such that $i+j\leq p-1$, and Proposition \ref{propineqactionsep} shows that it is also satisfied for $i+j\geq p$. Then $h_iw^i\in\mathfrak{A}_{L/K}$ for every $0\leq i\leq p-1$, whence $h\in\mathfrak{A}_{L/K}$.
\end{proof}

\begin{example}\normalfont\label{examplep3} Among the degree $3$ extensions of $\mathbb{Q}_3$ in Example \ref{examplepolymin} with dihedral normal closure $\widetilde{L}$, the ones for which $\widetilde{L}/\mathbb{Q}_3$ is totally and not maximally ramified correspond to the defining polynomials $x^3+3x+3$ and $x^3+6x+3$. Note that since $e=1$, $t$ is odd and $\widetilde{L}/\mathbb{Q}_3$ is not maximally ramified, $1\leq t\leq\frac{2pe}{p-1}=3$ implies that $t=1$, so $\ell=a=2$. We take $3$ as uniformizer of $\mathbb{Z}_3$ and $\theta=3^2=9$. We can find either directly or using Proposition \ref{lemaparam} that $$\nu_0=0,\quad\nu_1=1,\quad\nu_2=2.$$ Note that $\nu_2=a=e+\frac{p-1}{2}$, which is compatible with the fact that $t\geq\frac{2pe}{p-1}-2$. On the other hand, one sees directly that $$n_0=0,\quad n_1=1,\quad n_2=2.$$ We deduce that $\mathfrak{A}_{\theta}=\mathfrak{A}_{L/K}$ and that they have $\mathcal{O}_K$-basis $$\Big\{1,\frac{w}{3},\frac{w^2}{9}\Big\}.$$ 
\end{example}

\begin{example}\normalfont\label{examplep>3} If $p>3$, there are two possible degree $p$ extensions of $\mathbb{Q}_p$ with totally ramified dihedral degree $2p$ normal closure, given by the polynomials $x^p+2ax^{\frac{p-1}{2}}+p$, $a\in\{2,p-2\}$ (see \cite[Theorem 5.2 2.]{awtreyedwards}). We shall find the explicit form of $\mathfrak{A}_{L/K}$ for these cases. Actually, the development below will also be valid for the cases in which $K/\mathbb{Q}_p$ is unramified, since we will only use that $e=1$. 

We have that $1\leq t\leq\frac{2pe}{p-1}$ with $t$ odd and $e=1$, so $t=1$ and $\ell=a=\frac{p+1}{2}$. Then, $$\nu_i=\Big\lfloor\frac{\frac{p+1}{2}(i+1)}{p}\Big\rfloor=\Big\lfloor\frac{ip+p+i+1}{2p}\Big\rfloor=\begin{cases}\lfloor\frac{i+1}{2}+\frac{i+1}{2p}\rfloor=\frac{i+1}{2} & \hbox{if }i\hbox{ is odd,} \\\lfloor\frac{i}{2}+\frac{p+i+1}{2p}\rfloor & \hbox{if }i\hbox{ is even.}\end{cases}$$ For even $i<p-1$, clearly $\nu_i=\frac{i}{2}$. On the other hand, $\nu_{p-1}=\frac{p+1}{2}$. Then, $$\nu_i=\begin{cases}
\frac{i}{2} & \hbox{if }i\hbox{ is even and }i<p-1, \\
\frac{i+1}{2} & \hbox{if }i\hbox{ is odd}, \\
\frac{p+1}{2} & \hbox{if }i=p-1.
\end{cases}$$ 

From the definition, we find $$n_i=\begin{cases}
0 & \hbox{if }i=1, \\
\frac{i+1}{2} & \hbox{if }i\hbox{ is odd and }i>1, \\
\frac{i}{2} & \hbox{if }i\hbox{ is even and }i<p-1, \\
\frac{p+1}{2} & \hbox{if }i=p-1. \\
\end{cases}$$ Note that we always have $n_0=n_1=0$, $n_2=1$, $n_{p-2}=\frac{p-1}{2}$ and $n_{p-1}=\frac{p+1}{2}$, which are all the terms of the sequence for $p=5$. For $p>5$, this is completed with $n_i=n_{i+1}=\frac{i+1}{2}$ for every odd $i$ such that $3\leq i\leq p-4$. Then $\mathfrak{A}_{L/K}$ has $\mathcal{O}_K$-basis $\Big\{1,w,\frac{w^2}{p},\frac{w^3}{p^2},\frac{w^4}{p^3}\}$ for $p=5$ and $$\Big\{1,w,\frac{w^2}{p},\frac{w^3}{p^2},\frac{w^4}{p^2},\dots,\frac{w^{p-4}}{p^{\frac{p-3}{2}}},\frac{w^{p-3}}{p^{\frac{p-3}{2}}},\frac{w^{p-2}}{p^{\frac{p-1}{2}}},\frac{w^{p-1}}{p^{\frac{p+1}{2}}}\Big\}$$ for $p>5$. In particular, $\mathfrak{A}_{\theta}\neq\mathfrak{A}_{L/K}$.
\end{example}

\subsection{Characterization of the equality $\mathfrak{A}_{L/K}=\mathfrak{A}_{\theta}$}

In this part, we determine whether the equality $\mathfrak{A}_{L/K}=\mathfrak{A}_{\theta}$ is fulfilled. First, we will study under which conditions $\mathfrak{A}_{\theta}$ is a subring of $H$.

\begin{lema}\label{lemasubr} Assume that $a\neq0$ and let $\theta=\pi_L^a$. Let $1\leq m\leq p-1$. Given $\nu\in\mathbb{Z}$, $\pi_K^{-\nu}w^{p-1+m}\in\mathfrak{A}_{\theta}$ if and only if $\nu\leq e+\frac{p-1}{2}+\nu_m$.
\end{lema}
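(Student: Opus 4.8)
The plan is to reduce the membership condition to a single valuation inequality and then compute that valuation exactly from the explicit expansion of $w^{p-1+m}$ established above. Since $\mathfrak{A}_{\theta}=\{h\in H\mid h\cdot\theta\in\mathcal{O}_L\}$ and $v_L(\pi_K^{-\nu}w^{p-1+m}\cdot\theta)=-p\nu+v_L(w^{p-1+m}\cdot\theta)$, the element $\pi_K^{-\nu}w^{p-1+m}$ lies in $\mathfrak{A}_{\theta}$ if and only if $p\nu\le v_L(w^{p-1+m}\cdot\theta)$. As $\nu\in\mathbb{Z}$, this is equivalent to $\nu\le\big\lfloor v_L(w^{p-1+m}\cdot\theta)/p\big\rfloor$, so everything reduces to proving
\[
v_L(w^{p-1+m}\cdot\theta)=p\Big(e+\tfrac{p-1}{2}\Big)+a+m\ell.
\]

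To compute this valuation I would substitute the expansion of $w^{p-1+m}$ from the proposition above and act term by term on $\theta$. Each summand has the shape $\pi_K^{c}\,r\,w^{i}$ with $r\in\mathcal{O}_K$, and by Proposition \ref{proarithmhopfgalois} (1), $v_L(\pi_K^{c}r\,w^{i}\cdot\theta)=p\,c+p\,v_K(r)+a+i\ell$. Since $v_L(\pi_K^{c}r)\equiv0\pmod p$ while $v_L(w^{i}\cdot\theta)=a+i\ell\equiv a(i+1)\pmod p$, and $\gcd(a,p)=1$ because $0<a<p$, the summands attached to distinct powers $w^{i}$ have pairwise distinct valuations modulo $p$. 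Hence there are no cancellations and $v_L(w^{p-1+m}\cdot\theta)$ equals the minimum of the term valuations.

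The main step is to locate this minimum. Within the first (unit-coefficient) line of the expansion, the term attached to $w^{p-1-2s}$ (for even $m$) or $w^{p-2-2s}$ (for odd $m$) has valuation of the form $[\text{const}]+s(p-2\ell)$; since $p-2\ell=-t<0$, this is strictly decreasing in $s$, so the minimum over the first line is attained at the lowest power $w^{m}$, whose coefficient $r_m^{(m)}$ is a unit, yielding exactly $p(e+\frac{p-1}{2})+a+m\ell$. It then remains to show that every term of the second (non-unit) line has strictly larger valuation. Comparing the smallest such term (the one in $w$ for odd $m$, in $w^{2}$ for even $m$) with the $w^{m}$ term, the difference of valuations works out to $pe-(m-1)\frac{t}{2}$ for odd $m$ and $pe-(m-2)\frac{t}{2}$ for even $m$. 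Using $m\le p-1$ the coefficient of $\frac{t}{2}$ is at most $p-2$, so together with $t\le\frac{2pe}{p-1}$ one gets $(p-2)\frac{t}{2}\le(p-2)\frac{pe}{p-1}<pe$, making both differences strictly positive. This comparison, where the ramification bound $t\le\frac{2pe}{p-1}$ is genuinely invoked, is the part that requires the most care and is the main obstacle.

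Combining these observations gives $v_L(w^{p-1+m}\cdot\theta)=p(e+\frac{p-1}{2})+a+m\ell$. Finally, since $e+\frac{p-1}{2}\in\mathbb{Z}$ (as $p$ is odd) and $\nu_m=\big\lfloor(a+m\ell)/p\big\rfloor$ by \eqref{defnui}, we obtain $\big\lfloor v_L(w^{p-1+m}\cdot\theta)/p\big\rfloor=e+\frac{p-1}{2}+\nu_m$, which is exactly the asserted condition $\nu\le e+\frac{p-1}{2}+\nu_m$.
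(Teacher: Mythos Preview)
Your proof is correct and takes a closely related but slightly different route from the paper. The paper expresses $\pi_K^{-\nu}w^{p-1+m}$ with respect to the $\mathcal{O}_K$-basis $\{\pi_K^{-\nu_j}w^j\}$ of $\mathfrak{A}_\theta$ (Proposition~\ref{propintbasisL}) and checks integrality of each coordinate; the most restrictive first-line condition is located via Proposition~\ref{lemaparam}(4) (the inequality $\nu_{m+2(i+1)}-\nu_{m+2i}\ge 1$). You instead compute $v_L(w^{p-1+m}\cdot\theta)$ directly, observing that the summands have pairwise distinct $L$-valuations modulo $p$, so the valuation equals their minimum. Both arguments rest on the same expansion of $w^{p-1+m}$ and are essentially dual to one another.

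One genuine advantage of your version is that you explicitly bound the second-line terms (those in $w^j$ with $j<m$) by invoking the ramification bound $t\le\frac{2pe}{p-1}$ to get $pe-jt>0$. The paper's proof writes ``if and only if'' with only the first-line inequalities $e+\frac{p-1}{2}-i-\nu\ge -\nu_{m+2i}$ and does not spell out why the second-line constraints are redundant; your computation supplies exactly this missing justification. Conversely, the paper's formulation makes transparent that the binding condition occurs at $i=0$, via the already-established monotonicity property of the $\nu_k$, whereas you recover this by the ad hoc observation that the term valuations along the first line change by $p-2\ell=-t<0$. Either way the outcome and the essential mechanism coincide.
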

\begin{proof}
We will prove it only for the case that $m$ is odd, as the proof for even $m$ is similar. Let us write $m=2m'-1$ as in Proposition \ref{w-highpoly}. By that result and Proposition \ref{propintbasisL}, we have that $\pi_K^{-\nu}w^{p-1+m}\in\mathfrak{A}_{\theta}$ if and only if $$e+m'+j-1-\nu\geq-\nu_{p-2j}\quad\hbox{for all }1\leq j\leq\frac{p+1}{2}-m',$$ $$2e+m'+j-1-\nu\geq-\nu_{p-2j}\quad\hbox{for all }\frac{p+1}{2}-m'+1\leq j\leq\frac{p-1}{2}.$$ This is equivalent to $\nu$ being upper bounded by all the terms \begin{equation}\label{eqseqj}
    \begin{split}
        e+m'+j-1+\nu_{p-2j},\quad&1\leq j\leq \frac{p+1}{2}-m',\\
        2e+m'+j-1+\nu_{p-2j},\quad&\frac{p+1}{2}-m'+1\leq j\leq\frac{p-1}{2}.
    \end{split}
\end{equation} This is equivalent to $j$ being upper bounded by the smallest of all these terms. The terms at each of the two sequences defined by the two lines in \eqref{eqseqj} decrease as $j$ increases. Indeed, by Proposition \ref{lemaparam} (4), we have that $$(e+m'+j-1+\nu_{p-2j})-(e+m'+j+\nu_{p-2(j+1)})=\nu_{p-2j}-\nu_{p-2(j+1)}-1\geq0$$ for all $1\leq j\leq\frac{p+1}{2}-m'$, and similarly for the terms in the second line. Hence, at each line the greatest value of $j$ gives the minimum among all the terms at that line. Therefore, the minimum at the first (resp. second) line of \eqref{eqseqj} is $e+\frac{p-1}{2}+\nu_m$ (resp. $2e+m'+\frac{p-1}{2}-1+\nu_1$). Finally, Proposition \ref{lemaparam} (7) gives $\nu_m<e+m'$, and this combined with the fact that $\nu_1\geq1$ yields $$2e+m'+\frac{p-1}{2}-1+\nu_1>e+\frac{p-1}{2}+\nu_m.$$ Hence, $\pi_K^{-\nu}w^{p-1+m}\in\mathfrak{A}_{\theta}$ if and only if $\nu\leq e+\frac{p-1}{2}+\nu_m$, finishing the proof.
\end{proof}

\begin{pro}\label{prosubr} For $\theta=\pi_L^a$, $\mathfrak{A}_{\theta}$ is a ring if and only if the following implications are satisfied:
\begin{itemize}
    \item[(1)] If $i+j\leq p-1$, then $\nu_i+\nu_j\leq\nu_{i+j}$.
    \item[(2)] If $i+j\geq p$, then $\nu_i+\nu_j\leq e+\frac{p-1}{2}+\nu_{i+j+1-p}$.
\end{itemize}
\end{pro}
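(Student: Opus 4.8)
The plan is to exploit that $\mathfrak{A}_{\theta}$ is already an $\mathcal{O}_K$-lattice in $H$ containing the unit (indeed $1\in\mathfrak{A}_H\subseteq\mathfrak{A}_{\theta}$, and in fact $\nu_0=\lfloor a/p\rfloor=0$, so $1=\pi_K^{-\nu_0}w^0$ is one of the basis elements of Proposition \ref{propintbasisL}). Hence the only thing separating $\mathfrak{A}_{\theta}$ from being a subring of $H$ is closure under multiplication. By $\mathcal{O}_K$-bilinearity of the product, $\mathfrak{A}_{\theta}$ is closed under multiplication if and only if the product of any two elements of the $\mathcal{O}_K$-basis $\{\pi_K^{-\nu_i}w^i\}_{i=0}^{p-1}$ lies in $\mathfrak{A}_{\theta}$. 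So the whole statement reduces to deciding, for each pair $0\leq i,j\leq p-1$, whether
\[
(\pi_K^{-\nu_i}w^i)(\pi_K^{-\nu_j}w^j)=\pi_K^{-\nu_i-\nu_j}w^{i+j}\in\mathfrak{A}_{\theta}.
\]

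First I would record the elementary membership criterion for a single monomial: for $0\leq k\leq p-1$ and $\nu\in\mathbb{Z}$, one has $\pi_K^{-\nu}w^k\in\mathfrak{A}_{\theta}$ if and only if $\nu\leq\nu_k$. This is immediate from Proposition \ref{proarithmhopfgalois} (1), since $v_L(\pi_K^{-\nu}w^k\cdot\theta)=-p\nu+a+k\ell$ is non-negative exactly when $\nu\leq\frac{a+k\ell}{p}$, i.e.\ (as $\nu$ is an integer) when $\nu\leq\nu_k$.

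Next I would split into the two cases of the statement. If $i+j\leq p-1$, then $w^{i+j}$ is again one of the basis monomials, and the membership criterion gives $\pi_K^{-\nu_i-\nu_j}w^{i+j}\in\mathfrak{A}_{\theta}$ if and only if $\nu_i+\nu_j\leq\nu_{i+j}$, which is precisely condition (1). If instead $i+j\geq p$, then since $i,j\leq p-1$ we have $p\leq i+j\leq 2p-2$, so setting $m=i+j+1-p$ gives $1\leq m\leq p-1$ and $w^{i+j}=w^{p-1+m}$. Applying Lemma \ref{lemasubr} to $\pi_K^{-(\nu_i+\nu_j)}w^{p-1+m}$ then yields that this element lies in $\mathfrak{A}_{\theta}$ if and only if $\nu_i+\nu_j\leq e+\frac{p-1}{2}+\nu_m=e+\frac{p-1}{2}+\nu_{i+j+1-p}$, which is precisely condition (2). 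Collecting the two cases over all pairs $(i,j)$ gives the claimed equivalence.

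The genuinely substantive input is Lemma \ref{lemasubr}, which handles the wrap-around powers $w^{i+j}$ with $i+j\geq p$: these are not basis monomials and must be expanded through the reduction of $w^p$ in Proposition \ref{w-poly} and its higher analogue in Proposition \ref{w-highpoly}. The reason condition (2) emerges in such a clean single-inequality form is that the minimum appearing in the proof of Lemma \ref{lemasubr} is attained at its first index, which in turn rests on the rapid growth $\nu_{k+2}-\nu_k\geq 1$ of the sequence $\{\nu_i\}$ from Proposition \ref{lemaparam} (4). Given that lemma, the remaining argument is the routine reduction to products of basis elements described above, and the only point requiring genuine care is the correct translation of the exponent $i+j\geq p$ into the index $m=i+j+1-p$ of Lemma \ref{lemasubr}.
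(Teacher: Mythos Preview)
Your proof is correct and follows essentially the same route as the paper: reduce to closure under multiplication of basis elements, handle $i+j\leq p-1$ via the direct membership criterion, and for $i+j\geq p$ write $i+j=p-1+m$ with $m=i+j+1-p$ and invoke Lemma \ref{lemasubr}. Your write-up is in fact slightly more explicit than the paper's (you spell out the single-monomial membership criterion and both directions for case (1)), but the argument is the same.
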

\begin{proof}
Since $\mathfrak{A}_{\theta}$ is a full $\mathcal{O}_K$-lattice of $H$ and contains the identity, it is a ring if and only if it is closed under the multiplication of $H$. Moreover, this is equivalent to the product being closed for the elements of the basis $\{\pi_K^{-\nu_k}w^k\}_{k=0}^{p-1}$ of $\mathfrak{A}_{\theta}$. Then, the necessity (for $\mathfrak{A}_{\theta}$ being a ring) of the implication in statement (1) is obtained immediately. As for the case $i+j\geq p$, we call $m=i+j-(p-1)$, so that $i+j=p-1+m$. By Lemma \ref{lemasubr} we have that $\pi_K^{-\nu_i}w^i\pi_K^{-\nu_j}w^j=\pi_K^{-\nu_i-\nu_j}w^{p-1+m}\in\mathfrak{A}_{\theta}$ if and only if $\nu_i+\nu_j\leq e+\frac{p-1}{2}+\nu_m=e+\frac{p-1}{2}+\nu_{i+j+1-p}$, which gives the necessity of the statement (2). Also, we obtain directly that if (1) and (2) are valid, then $\mathfrak{A}_{\theta}$ is a ring.
\end{proof}

This result is the analogue of \cite[Lemme]{ferton1972}. Now, we need to check under which conditions the implications in (1) and (2) are actually satisfied. We will use the following technical lemma.

\begin{lema}\label{lematestequal} Assume that $a\neq0$. The following statements are satisfied:
\begin{itemize}
    \item[(1)] If $1\leq i,j\leq p-1$ and $i+j\geq p$, then $\Big\lfloor(i+1)\frac{a}{p}\Big\rfloor+\Big\lfloor(j+1)\frac{a}{p}\Big\rfloor\leq e+\frac{p-1}{2}-(p-1)a_0+\Big\lfloor(i+j+2-p)\frac{a}{p}\Big\rfloor$.
    \item[(2)] $a$ divides $p-1$ if and only if for every $1\leq i,j\leq p-1$ such that $i+j\leq p-1$, $\Big\lfloor(i+1)\frac{a}{p}\Big\rfloor+\Big\lfloor(j+1)\frac{a}{p}\Big\rfloor\leq\Big\lfloor(i+j+1)\frac{a}{p}\Big\rfloor$
\end{itemize}
\end{lema}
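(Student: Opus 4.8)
The plan is to establish, item by item, the two floor inequalities that through Proposition \ref{prosubr} decide whether $\mathfrak{A}_{\theta}$ is a ring. Throughout I abbreviate $h(m)=\lfloor m\tfrac{a}{p}\rfloor$ and use freely the superadditivity of the floor, $\lfloor x\rfloor+\lfloor y\rfloor\leq\lfloor x+y\rfloor$. For (1) I would argue in three short moves. First, this superadditivity gives $\lfloor(i+1)\tfrac{a}{p}\rfloor+\lfloor(j+1)\tfrac{a}{p}\rfloor\leq\lfloor(i+j+2)\tfrac{a}{p}\rfloor$. Second, since $p\cdot\tfrac{a}{p}=a\in\mathbb{Z}$ and $i+j\geq p$, periodicity of the floor yields $\lfloor(i+j+2)\tfrac{a}{p}\rfloor=a+\lfloor(i+j+2-p)\tfrac{a}{p}\rfloor$. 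Combining the two, it only remains to check the numerical bound $a\leq e+\tfrac{p-1}{2}-(p-1)a_0$, that is $\nu_{p-1}=a+(p-1)a_0\leq e+\tfrac{p-1}{2}$; but this is precisely Proposition \ref{lemaparam} (1) together with (2). This disposes of (1) without any case analysis.

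For (2), after the substitution $m=i+1$, $n=j+1$ (so that $2\leq m,n$ and $m+n\leq p+1$), the asserted inequality reads $h(m)+h(n)\leq h(m+n-1)$; equivalently, writing $g(i):=h(i+1)=\lfloor(i+1)\tfrac{a}{p}\rfloor$, it is exactly the superadditivity $g(i)+g(j)\leq g(i+j)$ on the admissible range $i+j\leq p-1$. For the implication $a\mid p-1\Rightarrow$ inequality I would set $p=ad+1$ with $d=\tfrac{p-1}{a}$ and prove the closed form $h(m)=\lfloor\tfrac{m-1}{d}\rfloor$: writing $m=kd+r$ with $1\leq r\leq d$, one computes $ma=kp+(ra-k)$ with $0\leq ra-k<p$, so $h(m)=k=\lfloor\tfrac{m-1}{d}\rfloor$. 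The desired inequality then becomes $\lfloor\tfrac{m-1}{d}\rfloor+\lfloor\tfrac{n-1}{d}\rfloor\leq\lfloor\tfrac{m+n-2}{d}\rfloor$, which is the ordinary superadditivity of $\lfloor\,\cdot\,/d\rfloor$.

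The converse, that the inequality holding for all $i,j$ forces $a\mid p-1$, is the step I expect to be the crux, and I would prove it by contradiction through iterating the hypothesis. Superadditivity of $g$ gives $r\,g(q)\leq g(rq)$ whenever $rq\leq p-1$, where $q:=\lfloor\tfrac{p}{a}\rfloor$ and $r\geq1$. Setting $r_0:=p-qa=p\bmod a\in\{1,\dots,a-1\}$, one has $(q+1)a=p+(a-r_0)$, hence $g(q)=h(q+1)=1$ and so $r\leq g(rq)=h(rq+1)$. From $(rq+1)a=rp-(rr_0-a)$ a direct computation gives $h(rq+1)=r-\lceil\tfrac{rr_0-a}{p}\rceil$ as soon as $rr_0>a$. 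Now assume $a\nmid p-1$; then $a\geq3$ and $r_0\geq2$, so $r^{*}:=\lfloor\tfrac{a}{r_0}\rfloor+1$ satisfies $a<r^{*}r_0\leq a+r_0<a+p$, whence $\lceil\tfrac{r^{*}r_0-a}{p}\rceil=1$ and $h(r^{*}q+1)=r^{*}-1$. Since $r^{*}\leq a-1\leq\lfloor\tfrac{p-1}{q}\rfloor$, the index $r^{*}q$ is admissible, and the iterated inequality yields $r^{*}\leq h(r^{*}q+1)=r^{*}-1$, a contradiction.

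The only genuinely delicate points in the necessity argument are the choice of the base index $q$ (the first index at which $h$ jumps, guaranteeing $g(q)=1$) and the verification that $r^{*}$ stays within the admissible range $r^{*}q\leq p-1$, which rests on the elementary estimate $\lfloor\tfrac{p-1}{q}\rfloor\geq a-1$; once these are secured the contradiction is immediate, and no continued-fraction input is needed.
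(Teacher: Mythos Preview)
Your proof is correct. Part (1) and the forward implication in (2) match the paper's argument essentially verbatim (the paper cites \cite[Lemma 7.4]{delcorsoferrilombardo} for the identity $h(m)=\lfloor\tfrac{m-1}{d}\rfloor$, which you establish directly).

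For the converse in (2) the two arguments diverge. The paper, following \cite{ferton1972}, produces a specific pair $(i,j)$ violating the inequality by exploiting the continued-fraction apparatus of Section~\ref{sectcontfrac} (Proposition~\ref{propcontfrac}): one locates indices via the convergents $q_i$ and compares fractional parts. Your route is more elementary and entirely self-contained: assuming superadditivity of $g$ on the full range, you iterate it at the single base index $q=\lfloor p/a\rfloor$ to force $r^{*}\leq g(r^{*}q)=r^{*}-1$. The verifications that $g(q)=1$, that $h(r^{*}q+1)=r^{*}-1$, and that $r^{*}q\leq p-1$ (via $r^{*}\leq a-1$ and $p-1=qa+r_0-1\geq qa$) all go through as you state; note in fact that $r_0\geq2$ even gives $\lfloor(p-1)/q\rfloor\geq a$, so the admissibility bound has a little slack. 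The gain of your approach is that it avoids any appeal to continued fractions or to the external references \cite{ferton1972,fertonthesis}; the gain of the paper's approach is that it exhibits an explicit violating pair, which is conceptually aligned with the later analysis in Section~\ref{secthighram} where the convergents $q_i$ and the set $E$ are the organizing objects.
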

\begin{proof}
We shall follow an adaptation of the arguments in \cite[Proof of Proposition 4.a)]{ferton1972} and \cite[Chapitre II, Lemme 3]{fertonthesis} to our case.

If $i+j\geq p$, then $$\Big\lfloor(i+1)\frac{a}{p}\Big\rfloor+\Big\lfloor(j+1)\frac{a}{p}\Big\rfloor\leq\Big\lfloor(i+j+2)\frac{a}{p}\Big\rfloor=a+\Big\lfloor(i+j+2-p)\frac{a}{p}\Big\rfloor.$$ Now, by Lemma \ref{lemaparam} (1),(2) we have $\nu_{p-1}=a+(p-1)a_0\leq e+\frac{p-1}{2}$, so $$\Big\lfloor(i+1)\frac{a}{p}\Big\rfloor+\Big\lfloor(j+1)\frac{a}{p}\Big\rfloor\leq e+\frac{p-1}{2}-(p-1)a_0+\Big\lfloor(i+j+2-p)\frac{a}{p}\Big\rfloor.$$ This concludes the proof of (1).

Let us prove (2). First, we assume that $a$ divdes $p-1$ and call $d=\frac{p-1}{a}$. By \cite[Lemma 7.4]{delcorsoferrilombardo}, we have that $\Big\lfloor(k+1)\frac{a}{p}\Big\rfloor=\Big\lfloor\frac{k}{d}\Big\rfloor$ for every $1\leq k\leq p-1$. Then, $$\Big\lfloor(i+1)\frac{a}{p}\Big\rfloor+\Big\lfloor(j+1)\frac{a}{p}\Big\rfloor=\Big\lfloor\frac{i}{d}\Big\rfloor+\Big\lfloor\frac{j}{d}\Big\rfloor\leq\Big\lfloor\frac{i+j}{d}\Big\rfloor=\Big\lfloor(i+j+1)\frac{a}{p}\Big\rfloor.$$ If $a$ does not divide $p-1$, choosing integers $i,j$ as in \cite[Proof of Proposition 4.a)]{ferton1972}, we can show, with the help of Proposition \ref{propcontfrac}, that $$\widehat{(i+1)\frac{a}{p}}+\widehat{(j+1)\frac{a}{p}}<\widehat{(i+j+1)\frac{a}{p}},$$ which is equivalent to $$\Big\lfloor(i+1)\frac{a}{p}\Big\rfloor+\Big\lfloor(j+1)\frac{a}{p}\Big\rfloor>\Big\lfloor(i+j+1)\frac{a}{p}\Big\rfloor.$$
\end{proof}

We are ready to prove the main result of this section.

\begin{pro}\label{propequalorders} Let $L/K$ be a degree $p$ extension of $p$-adic fields with dihedral normal closure $\widetilde{L}$. Let $t$ be the ramification jump of $\widetilde{L}/K$ and let $a$ be the remainder of $\ell=\frac{p+t}{2}$ mod $p$. Let $\pi_L$ be a uniformizer of $L$ and, if $a=0$, assume that $v_K(\pi_L^p)=1$. Define $\theta\coloneqq\sum_{i=0}^{p-1}\pi_L^i$ if $a=0$ and $\theta\coloneqq\pi_L^a$ otherwise. Then $\mathfrak{A}_{L/K}=\mathfrak{A}_{\theta}$ if and only if $a=0$ or $a\mid p-1$.
\end{pro}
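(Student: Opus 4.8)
The plan is to dispose of the two cases $a=0$ and $a\neq0$ separately, reducing in each case to results already in hand. For $a\neq0$ the whole question is converted into the ring-theoretic question answered by Proposition \ref{prosubr} and Lemma \ref{lematestequal}; for $a=0$ it is matched against Corollary \ref{coromaxram}.

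For $a=0$ I would reduce to Corollary \ref{coromaxram}. The element $\alpha$ produced by Lemma \ref{lemamaxram} satisfies $v_L(\alpha)=1$ and $\alpha^p\in\mathcal{O}_K$, so it is precisely a uniformizer with $v_K(\alpha^p)=1$, matching the hypothesis imposed on $\pi_L$. The only thing to verify is that an arbitrary such $\pi_L$ again makes the powers $\pi_L^j$ eigenvectors of $w$. Since $\pi_L^p\in K$ we have $\sigma(\pi_L)=\zeta\pi_L$ for some $p$-th root of unity $\zeta$; and $\zeta\neq1$, since otherwise $\pi_L\in\widetilde{L}^{\langle\sigma\rangle}=M$, forcing $\pi_L\in L\cap M=K$, impossible for a uniformizer of $L$. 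Thus $\zeta$ is primitive, lies in $M$, and $w\cdot\pi_L^j=z(\zeta^j-\zeta^{-j})\pi_L^j$, with eigenvalue lying in $L\cap M=K$ and integral, hence in $\mathcal{O}_K$, exactly as in Lemma \ref{lemamaxram}. Therefore the computation of Proposition \ref{proassocordermaxram} and Corollary \ref{coromaxram} applies verbatim to $\theta=\sum_{i=0}^{p-1}\pi_L^i$, yielding $\mathfrak{A}_{L/K}=\mathfrak{A}_{\theta}$.

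For $a\neq0$ the key reduction is that $\mathfrak{A}_{L/K}=\mathfrak{A}_{\theta}$ holds if and only if $\mathfrak{A}_{\theta}$ is a ring. Indeed, $\mathfrak{A}_{L/K}$ is always a ring and satisfies $\mathfrak{A}_{L/K}\subseteq\mathfrak{A}_{\theta}$, so if the two coincide then $\mathfrak{A}_{\theta}$ is a ring; conversely, since $\theta=\pi_L^a$ generates an $H$-normal basis by Proposition \ref{proarithmhopfgalois} (2), the integrality argument in the proof of Proposition \ref{prohnormalfreeness} (the implication starting from (3)) shows that whenever $\mathfrak{A}_{\theta}$ is a ring one has $\mathfrak{A}_{\theta}=\mathfrak{A}_H=\mathfrak{A}_{L/K}$. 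It therefore suffices to decide when $\mathfrak{A}_{\theta}$ is a ring, and this is precisely what Proposition \ref{prosubr} characterizes through its two inequalities on the $\nu_i$. Using $\nu_i=ia_0+\lfloor(i+1)\frac{a}{p}\rfloor$, the case $i+j\geq p$ of Proposition \ref{prosubr} (2) unwinds, after cancelling $(p-1)a_0$ from both sides, to exactly the inequality of Lemma \ref{lematestequal} (1), so condition (2) holds unconditionally; and the case $i+j\leq p-1$ of Proposition \ref{prosubr} (1) unwinds, after cancelling $(i+j)a_0$, to the inequality of Lemma \ref{lematestequal} (2), which holds for all admissible pairs if and only if $a\mid p-1$. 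The boundary instances with $i=0$ or $j=0$ are trivial (as $\nu_0=0$) or vacuous, hence do not affect the conclusion. Combining, $\mathfrak{A}_{\theta}$ is a ring if and only if $a\mid p-1$, and together with the case $a=0$ this gives the statement.

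The main obstacle I anticipate is bookkeeping rather than conceptual: carefully matching the two inequalities of Proposition \ref{prosubr} against the two parts of Lemma \ref{lematestequal} through the substitution $\nu_i=ia_0+\lfloor(i+1)\frac{a}{p}\rfloor$, and keeping the boundary indices under control. The one genuinely new ingredient is the eigenvector verification in the $a=0$ case, which is what guarantees that the hypothesis $v_K(\pi_L^p)=1$ really reproduces the setting of Corollary \ref{coromaxram} for the prescribed $\theta=\sum_{i=0}^{p-1}\pi_L^i$.
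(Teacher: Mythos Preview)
Your proof is correct and follows essentially the same route as the paper: invoke Corollary \ref{coromaxram} for $a=0$, and for $a\neq0$ reduce via Proposition \ref{prohnormalfreeness} to the ring criterion of Proposition \ref{prosubr}, then match its two conditions against the two parts of Lemma \ref{lematestequal} through the substitution $\nu_i=ia_0+\lfloor(i+1)\tfrac{a}{p}\rfloor$. The one place you go beyond the paper is the $a=0$ case: the paper simply cites Corollary \ref{coromaxram}, which was proved for the specific uniformizer $\alpha$ of Lemma \ref{lemamaxram}, whereas you explicitly verify that \emph{any} uniformizer $\pi_L$ with $\pi_L^p\in K$ again has powers that are eigenvectors of $w$, so that the same computation applies; this is a genuine (if small) gap you are filling.
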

\begin{proof}
By Proposition \ref{prohnormalfreeness}, $\mathfrak{A}_{\theta}=\mathfrak{A}_{L/K}$ if and only if $\mathfrak{A}_{\theta}$ is a subring of $H$. If $a=0$, by Corollary \ref{coromaxram}, we have that $\mathfrak{A}_{\theta}=\mathfrak{A}_{L/K}$. Therefore, it is enough to prove that if $a\neq0$, $\mathfrak{A}_{\theta}$ is a subring if and only if $a\mid p-1$. We will follow the characterization given in Proposition \ref{prosubr}.

Let $0\leq i,j\leq p-1$ be such that $i+j\geq p$. The inequality $\nu_i+\nu_j\leq e+\frac{p-1}{2}+\nu_{i+j+1-p}$ holds if and only if $$\Big\lfloor(i+1)\frac{a}{p}\Big\rfloor+\Big\lfloor(j+1)\frac{a}{p}\Big\rfloor\leq e+\frac{p-1}{2}-(p-1)a_0+\Big\lfloor(i+j+2-p)\frac{a}{p}\Big\rfloor.$$ Now, Lemma \ref{lematestequal} says that this is always the case. As for the case $i+j\leq p-1$, $\nu_i+\nu_j\leq\nu_{i+j}$  if and only if $$\Big\lfloor(i+1)\frac{a}{p}\Big\rfloor+\Big\lfloor(j+1)\frac{a}{p}\Big\rfloor\leq\Big\lfloor(i+j+1)\frac{a}{p}\Big\rfloor.$$ By Lemma \ref{lematestequal}, this holds for every $0\leq i,j\leq p-1$ such that $i+j\leq p-1$ if and only if $a$ divides $p-1$. Therefore, the implications in statements (1) and (2) of Proposition \ref{prosubr} hold if and only if $a$ divides $p-1$.
\end{proof}

\subsection{Freeness over the associated order}\label{sectfreenessnonmax}

Again, we assume that $a\neq0$. The aim of this part is to prove Theorem \ref{maintheorem} (3). Using the setting established in Section \ref{sectgener}, we could adapt the approach in \cite{ferton1972} for the cyclic case. However, for the sake of simplicity, we will make use of the machinery of scaffolds, which we recalled in Section \ref{sectscaffolds}. We assume all the notations that we introduced there.

As already mentioned in Remark \ref{rmkdescrH}, in \cite{elder} Elder works with typical extensions, which are the ones that are not generated by a $p$-th root of a uniformizer. Since $a\neq0$, $L/K$ is a typical extension. Then, by \cite[Theorem 3.5]{elder} and the paragraph following it, $L/K$ posseses a scaffold consisting of:
\begin{itemize}
    \item The family of elements $\lambda_k=x^{\mathfrak{a}(k)}\pi_K^{f_k}\in\mathbb{Z}$, where $x$ is a solution of the equation in \cite[Theorem 2.2]{elder} and $f_k$ is defined by the equality $k=-\mathfrak{a}(k)b+f_kp$ (which makes sense because $\mathfrak{a}(k)\equiv-kb^{-1}\,(\mathrm{mod}\,p)$ for every $0\leq k\leq p-1$).
    \item The element $\Psi$ defined in \cite[Theorem 3.1]{elder}.
\end{itemize}

The precision of the scaffold is $$\mathfrak{c}=pe-\frac{p-1}{2}t,$$ which is just the number that appears in Proposition \ref{lemaparam} (3). There, we proved that $\mathfrak{c}=pl+a$, where $$l=e+\frac{p-1}{2}-\nu_{p-1}.$$ By Proposition \ref{lemaparam} (2),(4) we have that $l\geq0$ with strict inequality if and only if $t<\frac{2pe}{p-1}-2$. The proof of Theorem \ref{maintheorem} (3) will follow from translating Theorem \ref{criteriascaffold} to the case of the scaffold above.

\begin{coro} Let us assume that $a\neq0$. If $a$ divides $p-1$, then $\mathcal{O}_L$ is $\mathfrak{A}_{L/K}$-free. Moreover, if $t<\frac{2pe}{p-1}-2$, then the converse holds.
\end{coro}
\begin{proof}
The shift parameter of the scaffold above is the number $b$ introduced in \cite[Theorem 2.2]{elder}, which with our notation coincides with $\frac{t-p}{2}$. Note that this is congruent to $a$ mod $p$. Following \cite[Remark 2.9]{byottchildselder}, if we multiply the element $\Psi$ by a suitable power of the uniformizer $\pi_L$, we obtain a scaffold of the same precision and with shift parameter $a$. From now on, we work with this new scaffold. In order to apply Theorem \ref{criteriascaffold}, we need to check whether the identity $w(s)=d(s)$ holds for all $s\in\mathbb{S}_p$. Since $\mathfrak{b}(s)=as$ for every $s\in\mathbb{S}_p$, it is easy to check that $d(s)=\nu_s-sa_0$ and $w(s)=n_s-sa_0$ for all $0\leq s\leq p-1$. Then, $w(s)=d(s)$ for all $s\in\mathbb{S}_p$ is equivalent to $\nu_i=n_i$ for all $0\leq i\leq p-1$. By Proposition \ref{propintbasisL} and Proposition \ref{probasisassocorder}, this is the same as $\mathfrak{A}_{L/K}=\mathfrak{A}_{\theta}$. Since $a\neq0$, from Proposition \ref{propequalorders} we know that this happens if and only if $a\mid p-1$. We have then proved that $w(s)=d(s)$ for all $s\in\mathbb{S}_p$ is equivalent to $a\mid p-1$.

It is easily checked that $(r\circ(-\mathfrak{b}))(p-1)=a$, so $\mathfrak{a}(a)=p-1$ and we can apply Theorem \ref{criteriascaffold} with $b=a$. Since $l\geq0$, the inequality $\mathfrak{c}\geq a$ always holds. Now, this implies the weak assumption in Theorem \ref{criteriascaffold} (1), giving that if $a\mid p-1$, then $\mathcal{O}_L$ is $\mathfrak{A}_{L/K}$-free. If $t<\frac{2pe}{p-1}-2$, then Proposition \ref{lemaparam} (4) gives that $\nu_{p-1}<\frac{2pe}{p-1}-2$, that is, $l>0$. Then $\mathfrak{c}\geq p+a$, which is the strong assumption in Theorem \ref{criteriascaffold} (2), and then the converse holds.
\end{proof}

\begin{rmk}\normalfont Actually, in \cite[Section 3]{elder}, Elder interprets \cite[Theorem 3.1]{byottchildselder} in the case of his scaffold, giving rise to \cite[Corollary 3.6]{elder}. Since he works with arbitrary ideals $\mathfrak{p}_L^h$, he imposes the stronger condition $\mathfrak{c}\geq2p-1$, which leads to the inequality $\frac{t}{2}<\frac{pe}{p-1}-2$. This is the reason why our Theorem \ref{maintheorem} (3) is a slightly stronger form of \cite[Corollary 3.6 (2)]{elder} for the case of degree $p$ extensions with dihedral normal closure.
\end{rmk}

\begin{example}\normalfont For the extension in Example \ref{examplep3}, we have that $a=2$ divides $p-1=2$, so $\mathcal{O}_L$ is $\mathfrak{A}_{L/K}$-free. However, in the ones corresponding to Example \ref{examplep>3}, $a=\frac{p+1}{2}$ does not divide $p-1$. We will eventually prove that for these ones $\mathcal{O}_L$ is $\mathfrak{A}_{L/K}$-free as well.
\end{example}

\section{The case $t\geq\frac{2pe}{p-1}-2$}\label{secthighram}

In this section, we work with a degree $p$ extension $L/K$ of $p$-adic fields with dihedral normal closure $\widetilde{L}$ such that the ramification jump $t$ of $L/K$ satisfies $$\frac{2pe}{p-1}-2\leq t\leq\frac{2pe}{p-1}.$$ In this part we will prove Theorem \ref{maintheorem} (4), and to do so we will adapt the techniques used in \cite{bertrandiasbertrandiasferton} for the cyclic case.

In order to study the $\mathfrak{A}_{L/K}$-freeness of $\mathcal{O}_L$, we will follow the method described at the end of Section \ref{secthnormalbases}. That is, for $\theta=\pi_L^a$, let $\alpha\in\mathfrak{A}_{\theta}$ and consider the map $\psi_{\alpha}\colon\mathfrak{A}_{L/K}\longrightarrow\mathfrak{A}_{\theta}$ defined by $\psi_{\alpha}(h)=h\alpha$. Let $M(\alpha)$ be the matrix of $\psi_{\alpha}$ as $\mathcal{O}_K$-linear map, where in $\mathfrak{A}_{L/K}$ we fix the $\mathcal{O}_K$-basis $\{\pi_K^{-n_i}w^i\}_{i=0}^{p-1}$ from Proposition \ref{probasisassocorder} and in $\mathfrak{A}_{\theta}$ we fix the $\mathcal{O}_K$-basis $\{\pi_K^{-\nu_k}w^k\}_{k=0}^{p-1}$ from Proposition \ref{propintbasisL}. Let us write $\alpha=\sum_{k=0}^{p-1}x_k\pi_K^{-\nu_k}w^k$. Then, $$M(\alpha)=\sum_{k=0}^{p-1}x_kM(\pi_K^{-\nu_k}w^k)$$ and denote $M(\pi_K^{-\nu_k}w^k)=(\mu_{j,i}^{(k)})_{j,i=0}^{p-1}$ for every $0\leq k\leq p-1$. Note that $\mu_{i,j}^{(k)}\in\mathcal{O}_K$ for every $0\leq j,i,k\leq p-1$ by definition, as they are the coordinates of $\psi_{\pi_K^{-\nu_k}w^k}(\pi_K^{-n_i}w^i)\in\mathfrak{A}_{\theta}$ with respect to the $\mathcal{O}_K$-basis $\{\pi_K^{-\nu_j}w^j\}_{j=0}^{p-1}$ of $\mathfrak{A}_{\theta}$. For any element $\mu\in\mathcal{O}_K$, we denote by $\overline{\mu}$ the projection of $\mu$ in $\mathcal{O}_K/\mathfrak{p}_K$.

\subsection{The continued fraction expansion of $\frac{\ell}{p}$}\label{sectsetE}

In order to determine the $\mathfrak{A}_{L/K}$-freeness of $\mathcal{O}_L$, we need to study the coefficients $\overline{\mu}_{j,i}^{(k)}$. As in the cyclic case, this is closely related with the continued fraction expansion of the number $\frac{\ell}{p}$. Let us acquire the notation and considerations in Section \ref{sectcontfrac}. In particular, from now on we work with the continued fraction expansion of $\frac{a}{p}$. Note that the notions and results in \cite[\textsection 1]{bertrandiasbertrandiasferton} only depend on the continued fraction expansion of a number of the form $\frac{a'}{p}$, where $0\leq a'\leq p-1$. Therefore, most of them can be translated to our case with few or no changes.

Let us denote $$E=\Big\{h\in\mathbb{Z}\,\Big|\,1\leq h< p,\,1\leq h'<h\,\Longrightarrow\,\widehat{h'\frac{a}{p}}>\widehat{h\frac{a}{p}}\Big\}.$$ The relevance of the set $E$ for our purposes relies in the following fact: the numbers $n_i$ admit descriptions depending on whether $p-i\in E$. It is the analogous result to \cite[Proposition 1]{bertrandiasbertrandiasferton} and \cite[Chapitre II, Proposition 5]{fertonthesis} for the cyclic case.

\begin{lema}\label{lemanuiE} Let $0\leq i\leq p-1$. We have that $n_i=ia_0+\Big\lfloor i\frac{a}{p}\Big\rfloor+\epsilon$, where $\epsilon=0$ if $p-i\notin E$ and $\epsilon=1$ if $p-i\in E$.
\end{lema}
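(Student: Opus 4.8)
The plan is to unwind both the minimum defining $n_i$ and the condition defining $E$ into statements about fractional parts of multiples of $\frac{a}{p}$, and then match them. First I would use the alternative expression $\nu_i=ia_0+\lfloor(i+1)\frac{a}{p}\rfloor$ to write
\[
n_i=ia_0+\min_{0\le j\le p-1-i}\Big(\Big\lfloor(i+j+1)\tfrac{a}{p}\Big\rfloor-\Big\lfloor(j+1)\tfrac{a}{p}\Big\rfloor\Big),
\]
and then reindex by $k=j+1$, so the minimum runs over $1\le k\le p-i$ of $\lfloor(i+k)\frac{a}{p}\rfloor-\lfloor k\frac{a}{p}\rfloor$. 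The elementary floor identity $\lfloor\alpha+\beta\rfloor-\lfloor\beta\rfloor=\lfloor\alpha\rfloor+\chi$, where $\chi=1$ if $\widehat{\alpha}+\widehat{\beta}\ge1$ and $\chi=0$ otherwise, applied with $\alpha=i\frac{a}{p}$ and $\beta=k\frac{a}{p}$, then gives $\lfloor(i+k)\frac{a}{p}\rfloor-\lfloor k\frac{a}{p}\rfloor=\lfloor i\frac{a}{p}\rfloor+\chi_k$, with $\chi_k=1$ exactly when $\widehat{i\frac{a}{p}}+\widehat{k\frac{a}{p}}\ge1$. Consequently $n_i=ia_0+\lfloor i\frac{a}{p}\rfloor+\epsilon$ where $\epsilon=\min_{1\le k\le p-i}\chi_k\in\{0,1\}$, which already produces the shape of the claimed formula; it remains to identify $\epsilon$ with the indicator of $p-i\in E$.

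Next I would translate the condition defining $\chi_k$. For $1\le i\le p-1$, writing $m=p-i$ we have $i\frac{a}{p}=a-m\frac{a}{p}$, and since $0<a<p$ and $p$ prime force $m\frac{a}{p}\notin\mathbb{Z}$, this yields $\widehat{i\frac{a}{p}}=1-\widehat{m\frac{a}{p}}$. Hence $\chi_k=1$ if and only if $\widehat{k\frac{a}{p}}\ge\widehat{m\frac{a}{p}}$. Because the residues $1,\dots,p-1$ have pairwise distinct fractional parts, equality occurs only at $k=m$ (where $\chi_m=1$ automatically), so $\epsilon=1$ if and only if $\widehat{k\frac{a}{p}}>\widehat{m\frac{a}{p}}$ for every $1\le k<m$. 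This last condition is precisely the definition of $m=p-i\in E$, giving $\epsilon=1\iff p-i\in E$ and finishing the case $i\ge1$. The case $i=0$ I would treat directly: then $n_0=0$ and $\widehat{i\frac{a}{p}}=0$ makes every $\chi_k=0$, so $\epsilon=0$, consistent with $p-0=p\notin E$ since $E\subseteq\{1,\dots,p-1\}$.

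The only delicate points, and where I would take care, are the boundary behaviours of the fractional-part inequality. Specifically, one must check that the endpoint $k=p-i$ is included in the range of the minimum and yields $\chi_k=1$ through the exact equality $\widehat{i\frac{a}{p}}+\widehat{m\frac{a}{p}}=1$, and that no other $k$ in $\{1,\dots,m\}$ produces this equality; both rest on the injectivity of $k\mapsto\widehat{k\frac{a}{p}}$ on $\{1,\dots,p-1\}$. The comparison with $E$ then matches its \emph{record-minimum} characterization (each $h\in E$ has $\widehat{h\frac{a}{p}}$ strictly smaller than all earlier fractional parts) against the requirement that all the $\chi_k$ equal $1$. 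I expect this bookkeeping around equality and endpoints to be the main obstacle, but no continued-fraction machinery (Lemma \ref{lemaparamE} or Proposition \ref{propcontfrac}) is needed here: the statement follows purely from the floor identity and the reflection $\widehat{i\frac{a}{p}}=1-\widehat{(p-i)\frac{a}{p}}$.
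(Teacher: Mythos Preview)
Your proof is correct and follows essentially the same approach as the paper's: both reduce the minimum to a dichotomy governed by whether $\widehat{i\frac{a}{p}}+\widehat{(j+1)\frac{a}{p}}$ exceeds $1$, then invoke the reflection $\widehat{i\frac{a}{p}}=1-\widehat{(p-i)\frac{a}{p}}$ to identify this with the defining condition of $E$. Your reindexing $k=j+1$ and explicit treatment of the equality case at $k=p-i$ and of $i=0$ are cosmetic refinements of the same argument.
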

\begin{proof}
We know that $$n_i=ia_0+\mathrm{min}_{0\leq j\leq p-1-i}\Big(\Big\lfloor(i+j+1)\frac{a}{p}\Big\rfloor-\Big\lfloor(j+1)\frac{a}{p}\Big\rfloor\Big).$$ We have that
\begin{align*}
    (i+j+1)\frac{a}{p}-(j+1)\frac{a}{p}&=\Big\lfloor(i+j+1)\frac{a}{p}\Big\rfloor-\Big\lfloor(j+1)\frac{a}{p}\Big\rfloor+\widehat{(i+j+1)\frac{a}{p}}-\widehat{(j+1)\frac{a}{p}}, \\
    (i+j+1)\frac{a}{p}-(j+1)\frac{a}{p}&=\Big\lfloor i\frac{a}{p}\Big\rfloor+\widehat{i\frac{a}{p}}.
\end{align*} Hence, $$\Big\lfloor(i+j+1)\frac{a}{p}\Big\rfloor-\Big\lfloor(j+1)\frac{a}{p}\Big\rfloor=\Big\lfloor i\frac{a}{p}\Big\rfloor+\widehat{i\frac{a}{p}}+\widehat{(j+1)\frac{a}{p}}-\widehat{(i+j+1)\frac{a}{p}}.$$ From the right hand side being an integer number, we deduce that $$\Big\lfloor(i+j+1)\frac{a}{p}\Big\rfloor-\Big\lfloor(j+1)\frac{a}{p}\Big\rfloor=\begin{cases}
\lfloor i\frac{a}{p}\rfloor+1 & \hbox{if }\widehat{i\frac{a}{p}}+\widehat{(j+1)\frac{a}{p}}\geq1, \\
\lfloor i\frac{a}{p}\rfloor & \hbox{if }\widehat{i\frac{a}{p}}+\widehat{(j+1)\frac{a}{p}}<1. \\
\end{cases}$$ Note that $\widehat{i\frac{a}{p}}+\widehat{(j+1)\frac{a}{p}}\geq1$ if and only if $\widehat{(j+1)\frac{a}{p}}\geq\widehat{(p-i)\frac{a}{p}}$, and then the result follows from the definition of $E$.
\end{proof}

\begin{rmk}\normalfont We know that $\nu_i\geq n_i$ for every $0\leq i\leq p-1$. Since $\nu_i=ia_0+\Big\lfloor(i+1)\frac{a}{p}\Big\rfloor$, Lemma \ref{lemanuiE} gives that $\nu_i=n_i$ or $\nu_i=n_i+1$. If $p-i\in E$, using the expression of $n_i$ in this result and \eqref{defaltnui} shows that $\nu_i\leq n_i$, so $\nu_i=n_i$. If $p-i\notin E$, both situations are possible.
\end{rmk}

On the other hand, the elements of the set $E$ can be described in terms of the continued fraction expansion of $\frac{\ell}{p}$. Namely, we can think of $E$ as a collection of the denominators $q_{2i,r_{2i}}$ of the even semiconvergents of $\frac{\ell}{p}$ (see Section \ref{sectcontfrac}).

\begin{pro}\label{lemaparamE} If $n\geq2$,
\begin{equation*}
    \begin{split}
        E=\Big\{r_{2i}q_{2i+1}+q_{2i}\,\Big|\,0\leq i<\frac{n-1}{2},\,r_{2i}\in\mathbb{Z},&\,0\leq r_{2i}<a_{2i+2}\hbox{ if }i\neq\frac{n-3}{2},\\&\,0\leq r_{2i}\leq a_{2i+2}\hbox{ if }i=\frac{n-3}{2}\Big\}.
    \end{split}
\end{equation*}
\end{pro}

Let us analyze what this statement means. Recall that the denominators of the even semiconvergents of $\frac{\ell}{p}$ are $$q_{2i,r_{2i}}=r_{2i}q_{2i+1}+q_{2i},\quad0\leq r_{2i}\leq a_{2i+2},$$ with $q_{2i,0}=q_{2i}$ and $q_{2i,a_{2i+2}}=q_{2i+2}$. From now on, in order to ease the notation, we denote $r=r_{2i}$ (and keep in mind that $r$ always depends on $i$). We can think of this description of the set $E$ as a list of the denominators $q_{2i,r}$ as follows. Let us take $0\leq i<\frac{n-1}{2}=\lfloor\frac{n}{2}\rfloor$. Starting from $q_{2i}$, we add successively the next denominator $q_{2i+1}$, until we reach $(a_{2i+2}-1)q_{2i+1}+q_{2i}$. All these numbers are $q_{2i,r}$, $0\leq r\leq a_{2i+2}-1$, and they belong to $E$. If $i<\frac{n-3}{2}$, the next even denominator $q_{2i+2}$ is covered by the next value $i+1$ and it has already been listed. Otherwise, if $i=\Big\lfloor\frac{n-2}{2}\Big\rfloor$, $q_{2i+2}\notin E$ unless $n$ is odd, in which case $i=\frac{n-3}{2}$ and $q_{2i+2}=q_{n-1}$ is the maximum of $E$. The set $E$ for the lowest values of $n$ is: \begin{align*}
    &\{q_0,q_1+q_0,2q_1+q_0,\dots,(a_2-1)q_1+q_0\} && \hbox{if }n=2, \\
    &\{q_0,q_1+q_0,2q_1+q_0,\dots,(a_2-1)q_1+q_0,q_2\} && \hbox{if }n=3, \\
    &\{q_0,q_1+q_0,2q_1+q_0,\dots,(a_2-1)q_1+q_0,q_2,q_3+q_2,2q_3+q_2,\dots,(a_4-1)q_3+q_2\} && \hbox{if }n=4, \\
    &\{q_0,q_1+q_0,2q_1+q_0,\dots,(a_2-1)q_1+q_0,q_2,q_3+q_2,2q_3+q_2,\dots,(a_4-1)q_3+q_2,q_4\} && \hbox{if }n=5.
\end{align*}

The remainder of this section is devoted to proving Proposition \ref{lemaparamE}. It is stated without proof in \cite[Lemme 1]{bertrandiasbertrandiasferton}. A detailed proof can be found in \cite[Chapitre II, Lemme 4]{fertonthesis}, which we present here because it contains some useful ideas that will be used later on. First, let us introduce the following notations.

\begin{nota}\label{circleC} Let us fix a circle $C$ of length one (i.e with radius $\frac{1}{2\pi}$) in which we fix an origin $O$ over the $X$-axis.
\begin{itemize}
    \item[1.] To each integer $h\in\mathbb{Z}$ we assign a point $M_h$ of $C$ in such a way that $\widehat{h\frac{a}{p}}$ is the length of the arc from $O$ to $M_h$ in anticlockwise direction (see Figure \ref{fig1}). 
    \item[2.] The length of the minimal arc between two of these points $M_h$ and $M_k$ is given by \begin{equation}\label{distanceint}
    d(h,k)=\Big|\Big|(h-k)\frac{a}{p}\Big|\Big|
    \end{equation} (see Figure \ref{fig2}). This notion will be referred to as the distance within $C$ between two integer numbers henceforth.
    \item[3.] When a point $M_h$ of $O$ lies in the arc (in anticlockwise direction) between other two points $M_{h'}$, $M_{h''}$ with $\widehat{h'\frac{a}{p}}<\widehat{h''\frac{a}{p}}$, we will say that $M_h$ is between $M_{h'}$ and $M_{h''}$. This condition means that $\widehat{h'\frac{a}{p}}<\widehat{h\frac{a}{p}}<\widehat{h''\frac{a}{p}}$ (see Figure \ref{fig3}).
\end{itemize}
\end{nota}

\begin{figure}[h!]
\begin{minipage}{0.3 \textwidth}
    \centering
    \begin{tikzpicture}
        \draw[opacity=0.7] (-2,0) -- (2,0);
        \draw[opacity=0.7] (0,-2) -- (0,2);
        \draw (0,0) circle (1.5);

        \draw[line width=0.5mm] (1.5,0) arc (0:60:1.5) node at (30:2) {$\widehat{h\frac{a}{p}}$};
        
        \fill (1.5,0) circle (0.1) node [below right] {$O$};
        \fill (60:1.5) circle (0.1) node [above right] {$M_h$};

    \end{tikzpicture}
    \caption{}
    \label{fig1}
\end{minipage}
\begin{minipage}{0.3 \textwidth}
    \centering
    \begin{tikzpicture}
        \draw[opacity=0.7] (-2,0) -- (2,0);
        \draw[opacity=0.7] (0,-2) -- (0,2);
        \draw (0,0) circle (1.5);

        \draw[line width=0.5mm] (60:1.5) arc (60:160:1.5) node at (110:2) {$d(h,k)$};

        \fill (1.5,0) circle (0.1) node [below right] {$O$};
        \fill (60:1.5) circle (0.1) node [above right] {$M_h$};
        \fill (160:1.5) circle (0.1) node [above left] {$M_k$};
    
    \end{tikzpicture}
    \caption{}
    \label{fig2}
\end{minipage}
\begin{minipage}{0.3 \textwidth}
    \centering
    \begin{tikzpicture}
        \draw[opacity=0.7] (-2,0) -- (2,0);
        \draw[opacity=0.7] (0,-2) -- (0,2);
        \draw (0,0) circle (1.5);

        \draw[line width=0.5mm] (50:1.5) arc (50:190:1.5);

        \fill (1.5,0) circle (0.1) node [below right] {$O$};
        \fill (50:1.5) circle (0.1) node [above right] {$M_{h'}$};
        \fill (120:1.5) circle (0.1) node [above left] {$M_h$};
        \fill (190:1.5) circle (0.1) node [left] {$M_{h''}$};
        
    \end{tikzpicture}
    \caption{}
    \label{fig3}
\end{minipage}

\end{figure}

From now on and unless stated otherwise, all the arcs within $C$ will be taken as follows:
\begin{itemize}
    \item From the origin $O$ in anticlockwise direction, if $O$ is one of the nodes in the arc.
    \item Not containing the origin $O$, if none of the nodes in the arc is $O$.
\end{itemize}

This convention might not be followed in the notion of distance within $C$: in that case, we take the arc of minimal length between two points, regardless of whether it contains the origin or not.

\begin{rmk}\normalfont\label{rmkminarc} If two integer numbers $h,k$ are such that $0<|h-k|<q_i$ for some $i$, Proposition \ref{propcontfrac} (2) gives that $d(h,k)\geq||q_{i-1}\frac{a}{p}||$. Geometrically, this means that the minimal arc within $C$ between $M_h$ and $M_k$ has length $||q_{i-1}\frac{a}{p}||$.
\end{rmk}

Since $q_{2i}$ increases with $i$ and they all belong to $E$, $\widehat{q_{2i}\frac{a}{p}}$ decreases as $i$ increases. In Figure \ref{fig4} we represent this situation in the circle $C$ for $a\geq\frac{p+1}{2}$ and $n$ odd and large enough.

\begin{figure}[h!]
    \centering
    \begin{tikzpicture}
        \draw[opacity=0.7] (-2,0) -- (2,0);
        \draw[opacity=0.7] (0,-2) -- (0,2);
        \draw (0,0) circle (1.5);

        \draw[line width=0.5mm] (1.5,0) arc (0:27:1.5);
        \draw[line width=0.5mm] (20:1.5) arc (20:54:1.5);
        \draw[line width=0.5mm] (54:1.5) arc (54:135:1.5);
        \draw[line width=0.5mm] (135:1.5) arc (135:160:1.5);
        \draw[line width=0.5mm] (160:1.5) arc (160:190:1.5);
        
        \fill (1.5,0) circle (0.1) node [below right] {$O$};
        \fill (20:1.5) circle (0.1) node [above right] {$M_{q_{n-1}}$};
        \fill (54:1.5) circle (0.1) node [above right] {$M_{q_{n-3}}$};
        \fill (135:1.5) circle (0.1);
        \fill (135:1.6) node [above left] {$M_{q_{4}}$};
        \fill (160:1.5) circle (0.1) node [above left] {$M_{q_2}$};
        \fill (190:1.5) circle (0.1) node [left] {$M_{q_0}$};
    
    \end{tikzpicture}
    \caption{}
    \label{fig4}
\end{figure}

\begin{rmk}\normalfont Note that $a\geq\frac{p+1}{2}$ if $a_0=0$ (see the proof of Proposition \ref{lemaparam} (5)), so in that case the point $M_{q_0}$ is in the lower half semiplane. However, Proposition \ref{propcontfrac} (3) gives that $\widehat{q_{2i}\frac{a}{p}}<\frac{1}{2}$ for $i>0$, so the corresponding points $M_{q_{2i}}$ are in the upper half semiplane.
\end{rmk}

Actually, the arc between two consecutive points $M_{q_{2i}}$, $M_{q_{2i+2}}$ contains all the points $M_{q_{2i+2,r}}$ corresponding to semiconvergents. This leads to the following technical but important result.

\begin{lema}\label{lemadensity} Let $h\in\mathbb{Z}$, $1\leq h<p$. If $M_h$ is between $M_{q_{2i+2}}$ and $M_{q_{2i}}$ for some $0\leq i<\frac{n-1}{2}$, there is $0\leq r\leq a_{2i+2}$ such that $d(h,q_{2i,r})<||q_{2i+1}\frac{a}{p}||$. If in addition $|h-q_{2i,r}|<q_{2i+2}$, then $h=q_{2i,r}$.
\end{lema}
\begin{proof}
Given $r\geq1$, $$q_{2i,r}-q_{2i,r-1}=q_{2i+1}\quad\Longrightarrow\quad\Big|\Big|(q_{2i,r}-q_{2i,r-1})\frac{a}{p}\Big|\Big|=\Big|\Big|q_{2i+1}\frac{a}{p}\Big|\Big|.$$ Moreover, $\widehat{q_{2i,r}\frac{a}{p}}=\widehat{q_{2i+1}\frac{a}{p}}+\widehat{q_{2i,r-1}\frac{a}{p}}-1<\widehat{q_{2i,r-1}\frac{a}{p}}$ and $\widehat{q_{2i,r}\frac{a}{p}}>\widehat{q_{2i+2}\frac{a}{p}}$. By recurrence, we deduce that $$\widehat{q_{2i+2}\frac{a}{p}}<\widehat{q_{2i,a_{2i+2}-1}\frac{a}{p}}<\dots<\widehat{q_{2i,1}\frac{a}{p}}<\widehat{q_{2i}\frac{a}{p}},$$ and the difference between any two consecutive numbers is $||q_{2i+1}\frac{a}{p}||$ (see Figure \ref{fig5}). 

\begin{figure}[h!]
    \centering
    \begin{tikzpicture}
        \draw (30:7) arc (30:90:7);
        
        \draw[line width=0.5mm] (30:7) arc (30:42:7);
        \draw[line width=0.5mm] (66:7) arc (66:78:7);
        \draw[line width=0.5mm] (78:7) arc (78:90:7);
        
        \fill (30:7) circle (0.1) node [above right] {$M_{q_{2i+2}}$};
        \fill (42:6.3) node [below] {$||q_{2i+1}\frac{a}{p}||$};
        \fill (42:7) circle (0.1) node [above right] {$M_{q_{2i,a_{2i+2}-1}}$};
        \fill (66:7) circle (0.1) node [above right] {$M_{q_{2i,2}}$};
        \fill (76:6.7) node [below] {$||q_{2i+1}\frac{a}{p}||$};
        \fill (78:7) circle (0.1) node [above right] {$M_{q_{2i,1}}$};
        \fill (90:6.7) node [below] {$||q_{2i+1}\frac{a}{p}||$};
        \fill (90:7) circle (0.1) node [above right] {$M_{q_{2i}}$};
        
    \end{tikzpicture}
    \caption{}
    \label{fig5}
\end{figure}

Thus, given $h\in\mathbb{Z}$ as in the statement, $\widehat{h\frac{a}{p}}$ is between two consecutive numbers in the chain of inequalities above, so the distance within $C$ of $h$ and any of the two integers corresponding to these consecutive numbers is lower than $||q_{2i+1}\frac{a}{p}||$, proving the existence of $r$ as in the statement. The last sentence follows from Remark \ref{rmkminarc}.
\end{proof}

\begin{coro}\label{corodensity} If $h\leq q_{2i+2}$ and $M_h$ is between $M_{q_{2i+2}}$ and $M_{q_{2i}}$ for some $0\leq i<\frac{n-1}{2}$, there is $0\leq r\leq a_{2i+2}$ such that $h=q_{2i,r}$.
\end{coro}
\begin{proof}
By Lemma \ref{lemadensity}, there is $0\leq r\leq a_{2i+2}$ such that $d(h,q_{2i,r})<||q_{2i+1}\frac{a}{p}||$. Now, since $0<q_{2i,r}\leq q_{2i+2}$, $h\leq q_{2i+2}$ implies that $|h-q_{2i,r}|<q_{2i+2}$, so we have that $h=q_{2i,r}$.
\end{proof}

\begin{rmk}\normalfont\label{densityrefinement} The argument in the proof of Lemma \ref{lemadensity} can be used to prove the following refinement: If $M_h$ is between $M_{q_{2i,r}}$ and $M_{q_{2i+2}}$ for some $0\leq i<\frac{n-1}{2}$ and some $0\leq r\leq a_{2i+2}$, there is $r\leq r'\leq a_{2i+2}$ such that $d(h,q_{2i,r'})<||q_{2i+1}\frac{a}{p}||$. We can state and proof a similar refinement of Corollary \ref{corodensity}.
\end{rmk}

\begin{proof} \textit{(of Proposition \ref{lemaparamE})}
First of all, by definition of $E$ we have that $q_0=1\in E$, and Proposition \ref{propcontfrac} (2),(3) give that $q_{2i}\in E$ for all $i$ such that $2i<n$, that is, $i<\frac{n}{2}$. If $n$ is odd, this says that $q_0,q_2,\dots,q_{n-3},q_{n-1}\in E$, where the last value corresponds to the choice $i=\frac{n-3}{2}$ and $r=a_{2i+2}$. If $n$ is even, we have that $q_0,q_2,\dots,q_{n-4},q_{n-2}\in E$. In this case, the last value corresponds to $i=\frac{n-2}{2}$ and $r=0$, while for $r=a_{2i+2}$ we have $a_{2i+2}q_{2i+1}+q_{2i}=p\notin E$.

Let us prove that the numbers $q_{2i,r}$ belong to $E$. For fixed $i$ and $r$, call $h=q_{2i,r}$. Pick $1\leq h'<h$ and assume that $\widehat{h'\frac{a}{p}}<\widehat{h\frac{a}{p}}$. Since $h'<q_{2i+2}$ and $q_{2i+2}\in E$, we have that $\widehat{q_{2i+2}\frac{a}{p}}<\widehat{h'\frac{a}{p}}$, so $M_{h'}$ is between $M_{q_{2i+2}}$ and $M_h$. By Remark \ref{densityrefinement}, $h'=q_{2i,r'}$ for some $r< r'<a_{2i+2}$. But then $r'>r$ implies that $h'>h$, a contradiction. Hence $\widehat{h'\frac{a}{p}}\geq\widehat{h\frac{a}{p}}$, and since $1\leq h'<h<p$, the equality is not possible.

Finally, we prove that there are no more integer numbers in $E$. Given $h\in E$, we have that $q_0=1\leq h<q_n=p$, so obviously $q_{2i}\leq h\leq q_{2i+2}$ for some $i$. Now, using the definition of $E$, we find that $M_h$ is between $M_{q_{2i+2}}$ and $M_{q_{2i}}$. Then Corollary \ref{corodensity} gives the existence of some $0\leq r\leq a_{2i+2}$ such that $h=q_{2i,r}$.
\end{proof}

To sum up, we have proved that the numbers $n_i$, which are related with the structure of $\mathcal{O}_L$ as $\mathfrak{A}_{L/K}$-module, can be formulated depending on the set $E$ and that this set admits a parametrization in terms of the numbers $q_{2i,r}$, coming from the continued fraction expansion of $\frac{\ell}{p}$. This is the origin of the link between these two phenomena in the statement of Theorem \ref{maintheorem} (4).

\subsection{The matrices $M(\pi_K^{-\nu_k}w^k)$}\label{sectmatrices}

We turn to our problem of determining the projections $\overline{\mu}_{j,i}^{(k)}$ of the coefficients of the matrices $M(\pi_K^{-\nu_k}w^k)$. By definition, we have that \begin{equation*}
    \pi_K^{-\nu_k-n_i}w^{k+i}=\sum_{j=0}^{p-1}\mu_{j,i}^{(k)}\pi_K^{-\nu_j}w^j
\end{equation*} Using the uniqueness of coordinates with respect to the basis $\{w^l\}_{l=0}^{p-1}$, we see that \begin{equation}\label{coordbasis}
    \mu_{j,i}^{(k)}=C(w^j,w^{k+i})\pi_K^{\nu_j-\nu_k-n_i},
\end{equation} where $C(w^j,w^{k+i})$ is the coefficient of $w^j$ in the expression of $w^{k+i}$ with respect to that basis. Note that if such a coefficient is $0$, then $\mu_{j,i}^{(k)}=0$.

The easier case $k+i\leq p-1$ depends only on the fraction $\frac{a}{p}$, and therefore is as in \cite[Lemme 2.a]{bertrandiasbertrandiasferton}.

\begin{pro}\label{proentrieseasy} Assume that $k+i\leq p-1$ and call $h=p-i$.
\begin{itemize}
    \item[(1)] If $j\neq k+i$, then $\overline{\mu}_{j,i}^{(k)}=0$.
    \item[(2)] If $h\notin E$, $$\overline{\mu}_{k+i,i}^{(k)}=\begin{cases}
    1 & \hbox{if }h=p\hbox{ or }\widehat{(k+1)\frac{a}{p}}<\widehat{h\frac{a}{p}}\\
    0 & \hbox{otherwise}\end{cases}$$
    \item[(3)] If $h\in E$, $\overline{\mu}_{k+i,i}^{(k)}=1$.
\end{itemize}
\end{pro}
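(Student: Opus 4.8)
The plan is to reduce everything to reading off a single exponent of $\pi_K$, since the hypothesis $k+i\leq p-1$ makes $w^{k+i}$ itself one of the chosen basis vectors of $H$. First I would observe that, because $k+i\leq p-1$, the coefficient $C(w^j,w^{k+i})$ of $w^j$ in $w^{k+i}$ equals $\delta_{j,k+i}$. Substituting this into \eqref{coordbasis} gives $\mu_{j,i}^{(k)}=0$ whenever $j\neq k+i$, which is exactly (1), while for the surviving index $j=k+i$ one gets $\mu_{k+i,i}^{(k)}=\pi_K^{\nu_{k+i}-\nu_k-n_i}$. Thus $\overline{\mu}_{k+i,i}^{(k)}=1$ if this exponent is $0$ and $\overline{\mu}_{k+i,i}^{(k)}=0$ if it is positive, so the whole problem reduces to the computation of the integer $\nu_{k+i}-\nu_k-n_i$.

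Next I would compute that exponent using the explicit formula $\nu_m=ma_0+\lfloor(m+1)\frac{a}{p}\rfloor$ together with Lemma~\ref{lemanuiE}, which gives $n_i=ia_0+\lfloor i\frac{a}{p}\rfloor+\epsilon$ with $\epsilon=1$ if $h=p-i\in E$ and $\epsilon=0$ otherwise. The multiples of $a_0$ cancel, leaving
$$\nu_{k+i}-\nu_k-n_i=\Big\lfloor(k+i+1)\tfrac{a}{p}\Big\rfloor-\Big\lfloor(k+1)\tfrac{a}{p}\Big\rfloor-\Big\lfloor i\tfrac{a}{p}\Big\rfloor-\epsilon=\delta-\epsilon,$$
where $\delta\in\{0,1\}$ is the carry in the addition $(k+1)\frac{a}{p}+i\frac{a}{p}$, that is, $\delta=1$ precisely when $\widehat{(k+1)\frac{a}{p}}+\widehat{i\frac{a}{p}}\geq1$. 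Moreover this exponent is non-negative: taking $j=k$ in the minimum defining $n_i$ (legitimate since $k\leq p-1-i$) yields $n_i\leq\nu_{k+i}-\nu_k$, hence $\delta\geq\epsilon$.

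Finally I would split on whether $h\in E$. If $h\in E$, then $\epsilon=1$, and $\delta\geq\epsilon$ forces $\delta=1$, so the exponent is $0$ and $\overline{\mu}_{k+i,i}^{(k)}=1$, proving (3). If $h\notin E$, then $\epsilon=0$, so $\overline{\mu}_{k+i,i}^{(k)}=1$ exactly when $\delta=0$. Here I would use the identity $\widehat{h\frac{a}{p}}=\widehat{(p-i)\frac{a}{p}}=1-\widehat{i\frac{a}{p}}$, valid because $p\nmid ia$ for $0<i<p$; the degenerate case $i=0$, $h=p$ gives $\delta=0$ directly. Then $\delta=0$ is equivalent to $\widehat{(k+1)\frac{a}{p}}+\widehat{i\frac{a}{p}}<1$, i.e.\ to $\widehat{(k+1)\frac{a}{p}}<\widehat{h\frac{a}{p}}$, which produces $\overline{\mu}_{k+i,i}^{(k)}=1$, while the complementary case gives $\overline{\mu}_{k+i,i}^{(k)}=0$; this is exactly the distinction in (2). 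The one point requiring care is the boundary $k+i=p-1$, where $\widehat{(k+1)\frac{a}{p}}=\widehat{h\frac{a}{p}}$ and $\delta=1$, so it lands in the ``otherwise'' branch with $\overline{\mu}=0$, consistent with the statement. I expect the genuine obstacle to be nothing more than this careful fractional-part bookkeeping, rather than any deep structural input.
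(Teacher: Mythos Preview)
Your proposal is correct and follows essentially the same route as the paper: both reduce (1) to the observation that $w^{k+i}$ is already a basis element, and both reduce (2)--(3) to computing the exponent $\nu_{k+i}-\nu_k-n_i$ via Lemma~\ref{lemanuiE} and the carry $\delta$ in $\widehat{(k+1)\tfrac{a}{p}}+\widehat{i\tfrac{a}{p}}$. The one small difference is in (3): the paper invokes the definition of $E$ directly (from $k+1\leq h$ and $h\in E$ it gets $\widehat{(k+1)\tfrac{a}{p}}\geq\widehat{h\tfrac{a}{p}}$, hence $\delta=1$), whereas you obtain $\delta\geq\epsilon$ more structurally by plugging $j=k$ into the minimum defining $n_i$; your variant is arguably cleaner since it does not unpack $E$ at all.
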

\begin{proof}
The first statement is immediate from the fact that $C(w^j,w^{k+i})=0$ for $j\neq k+i$, $0\leq j<p$. Let us prove (2) and (3). For $j=k+i$, we have that $C(w^j,w^{k+i})=1$, so $\mu_{k+i,i}^{(k)}=\pi_K^{\nu_{k+i}-\nu_k-n_i}$. By Lemma \ref{lemanuiE}, we have that $$\nu_k+n_i=(k+i)a_0+\Big\lfloor i\frac{a}{p}\Big\rfloor+\Big\lfloor(k+1)\frac{a}{p}\Big\rfloor+\epsilon,$$ where $\epsilon=1$ if $h\in E$ and $\epsilon=0$ otherwise. Now, we have that $$\Big\lfloor(i+k+1)\frac{a}{p}\Big\rfloor-\Big\lfloor i\frac{a}{p}\Big\rfloor-\Big\lfloor(k+1)\frac{a}{p}\Big\rfloor=\widehat{i\frac{a}{p}}+\widehat{(k+1)\frac{a}{p}}-\widehat{(i+k+1)\frac{a}{p}},$$ so $$\nu_{k+i}-\nu_k-n_i=\widehat{i\frac{a}{p}}+\widehat{(k+1)\frac{a}{p}}-\widehat{(i+k+1)\frac{a}{p}}-\epsilon.$$ 

Assume that $h\notin E$, so that $$\nu_{k+i}-\nu_k-n_i=\widehat{i\frac{a}{p}}+\widehat{(k+1)\frac{a}{p}}-\widehat{(i+k+1)\frac{a}{p}}.$$ If $h=p$, we have $\nu_{k+i}=\nu_k+n_i$, so $\overline{\mu}_{k+i,i}^{(k)}=1$. Otherwise, we have the equality $\nu_{k+i}=\nu_k+n_i$ if and only if $\widehat{(k+1)\frac{a}{p}}+\widehat{i\frac{a}{p}}<1$, that is, $\widehat{(k+1)\frac{a}{p}}<\widehat{h\frac{a}{p}}$.

Assume that $h\in E$. In this case, we have $$\nu_{k+i}-\nu_k-n_i=\widehat{i\frac{a}{p}}+\widehat{(k+1)\frac{a}{p}}-\widehat{(i+k+1)\frac{a}{p}}-1.$$ It is clear that $\widehat{(k+1)\frac{a}{p}}+\widehat{i\frac{a}{p}}<2$. Then $\nu_{k+i}-\nu_k-n_i\leq0$, and it is also non-negative because $\mu_{k+i,i}^{(k)}\in\mathcal{O}_K$. Hence $\nu_{k+i}-\nu_k-n_i=0$ and $\overline{\mu}_{k+i,i}^{(k)}=1$.
\end{proof}

If $k+i>p-1$, there are some differences with respect to the corresponding result \cite[Lemme 2.b]{bertrandiasbertrandiasferton} for the cyclic case. For each $0\leq m\leq p-1$, call $$d_m=\mathrm{max}\Big\{d\in\{0,1,\dots,p-1-m\}\,|\,d\hbox{ even, }\nu_{m+d}=\nu_m+\frac{d}{2}\Big\}.$$ Note that $0$ satisfies the property of the defining set, so $d_m$ is well defined.

\begin{pro}\label{proentries} Assume that $k+i>p-1$, call $h=p-i$ and let $m=k+i-(p-1)$.
\begin{itemize}
    \item[(1)] If $j\not\equiv m\,(\mathrm{mod}\,2)$, then $\overline{\mu}_{j,i}^{(k)}=0$.
\end{itemize}
Otherwise, if $j\equiv m\,(\mathrm{mod}\,2)$, we have:
\begin{itemize}
    \item[(2)] If $m<p-1$ and $j<m$ or $j>m+d_m$, then $\overline{\mu}_{j,i}^{(k)}=0$.
    \item[(3)] If $h\notin E$ and $m\leq j\leq m+d_m$, $\overline{\mu}_{j,i}^{(k)}\neq0$ if $\frac{a}{p}+\widehat{(k+1)\frac{a}{p}}<\widehat{h\frac{a}{p}}$, and $\overline{\mu}_{j,i}^{(k)}=0$ otherwise.
    \item[(4)] If $h\in E$ and $m\leq j\leq m+d_m$, $\overline{\mu}_{j,i}^{(k)}\neq0$ if $\frac{a}{p}+\widehat{(k+1)\frac{a}{p}}<\widehat{h\frac{a}{p}}+1$, and $\overline{\mu}_{j,i}^{(k)}=0$ otherwise.
\end{itemize}
\end{pro}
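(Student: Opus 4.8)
The plan is to read off each reduction $\overline{\mu}_{j,i}^{(k)}$ directly from the explicit expansion of $w^{k+i}=w^{p-1+m}$ in the basis $\{w^j\}_{j=0}^{p-1}$ established earlier (the higher-power analogue of Proposition \ref{w-poly}). Setting $h=p-i$, so that $i=p-h$, $k=m+h-1$ and $k+1=m+h$, that expansion involves only exponents $j$ with $j\equiv m\ (\mathrm{mod}\ 2)$, the coefficient of $w^j$ having $\pi_K$-valuation
\[
V_j=c_j\,e+\frac{p+m-1-j}{2},\qquad c_j=\begin{cases}1 & \text{if } j\geq m,\\ 2 & \text{if } j<m.\end{cases}
\]
Statement (1) is then immediate, since $C(w^j,w^{k+i})=0$ whenever $j\not\equiv m\ (\mathrm{mod}\ 2)$. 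For the surviving $j$ I would use the identity $v_K(\mu_{j,i}^{(k)})=V_j+\nu_j-\nu_k-n_i$ recorded after \eqref{coordbasis}, together with $n_i=(p-h)a_0+a-\lfloor h\frac{a}{p}\rfloor-1+\epsilon$ (from Lemma \ref{lemanuiE}, where $\epsilon=1$ exactly when $h\in E$), and reduce everything to fractional parts of multiples of $\frac{a}{p}$.

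The first reduction is that $v_K(\mu_{j,i}^{(k)})$ is \emph{constant} for $m\leq j\leq m+d_m$: by the definition of $d_m$ one has $\nu_j-\nu_m=\frac{j-m}{2}$ on this range, which cancels exactly the decrease of $V_j$ across the first line. For $j>m+d_m$, Proposition \ref{lemaparam} (4) forces $\nu_j-\nu_m\geq\frac{j-m}{2}+1$; since every $\mu_{j,i}^{(k)}$ lies in $\mathcal{O}_K$ (because $\mathfrak{A}_{\theta}$ is an $\mathfrak{A}_{L/K}$-module) we have $v_K(\mu_{m,i}^{(k)})\geq0$, whence $v_K(\mu_{j,i}^{(k)})\geq v_K(\mu_{m,i}^{(k)})+1>0$, giving the second half of statement (2). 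For $j<m$ (the second line of the expansion) the valuation carries the extra summand $e$ coming from $c_j=2$; combined with the gap $\frac{m-j}{2}\geq1$ and the equality $\nu_{p-1}=e+\frac{p-1}{2}$ that holds throughout this section (Proposition \ref{lemaparam} (3)), this yields $v_K(\mu_{j,i}^{(k)})>0$ and hence the first half of (2).

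Finally, by the constancy above it suffices for (3) and (4) to evaluate $v_K(\mu_{m,i}^{(k)})$. Substituting $V_m=e+\frac{p-1}{2}$ and $\nu_{p-1}=a+(p-1)a_0$ (Proposition \ref{lemaparam} (1),(3)), the contributions of $a_0$ and of $a$ cancel and one is left with
\[
v_K(\mu_{m,i}^{(k)})=1-\epsilon+\Big(\Big\lfloor(m+1)\tfrac{a}{p}\Big\rfloor+\Big\lfloor h\tfrac{a}{p}\Big\rfloor-\Big\lfloor(m+h)\tfrac{a}{p}\Big\rfloor\Big).
\]
The bracket equals $-\big\lfloor\widehat{(m+1)\frac{a}{p}}+\widehat{h\frac{a}{p}}-\frac{a}{p}\big\rfloor\in\{-1,0,1\}$, so $\overline{\mu}_{m,i}^{(k)}\neq0$ reduces to a single inequality among $\widehat{(m+1)\frac{a}{p}}$, $\widehat{h\frac{a}{p}}$ and $\frac{a}{p}$. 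Splitting according to whether $h\in E$ ($\epsilon=1$) or not, and rewriting $\widehat{(k+1)\frac{a}{p}}=\widehat{(m+h)\frac{a}{p}}$ as the fractional part of $\widehat{(m+1)\frac{a}{p}}+\widehat{h\frac{a}{p}}-\frac{a}{p}$, a short case analysis turns this inequality into $\frac{a}{p}+\widehat{(k+1)\frac{a}{p}}<\widehat{h\frac{a}{p}}$ when $h\notin E$ and into $\frac{a}{p}+\widehat{(k+1)\frac{a}{p}}<\widehat{h\frac{a}{p}}+1$ when $h\in E$, which are exactly (3) and (4). I expect the main obstacle to be the second-line estimate ($j<m$) in statement (2), where the crude bounds on $\nu_k+n_i$ are not sharp and one must use both the extra factor $e$ and the precise growth of $\{\nu_i\}$; the fractional-part bookkeeping matching the inequalities in (3) and (4), and the parallel treatment of the even-$m$ expansion alongside the odd one, are delicate but routine.
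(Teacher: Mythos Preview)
Your proposal is correct and follows essentially the same route as the paper's proof: both read off $v_K(\mu_{j,i}^{(k)})$ from the expansion of $w^{p-1+m}$, use $\nu_j=\nu_m+\tfrac{j-m}{2}$ on the range $m\le j\le m+d_m$ to reduce (3)--(4) to a single valuation computation at $j=m$, and handle the cases $j<m$ and $j>m+d_m$ via growth estimates on $\{\nu_i\}$. The only cosmetic difference is that where the paper invokes Lemma \ref{lemasubr} to bound $\nu_k+n_i\le e+\tfrac{p-1}{2}+\nu_m$, you equivalently cite that $\mu_{m,i}^{(k)}\in\mathcal{O}_K$ because $\mathfrak{A}_\theta$ is an $\mathfrak{A}_{L/K}$-module; and your variable $S=\widehat{(m+1)\tfrac{a}{p}}+\widehat{h\tfrac{a}{p}}-\tfrac{a}{p}$ plays the role of the paper's $\gamma=\tfrac{a}{p}-\widehat{h\tfrac{a}{p}}+\widehat{(k+1)\tfrac{a}{p}}$.
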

\begin{proof}
We have that $w^{k+i}=w^{p-1+m}$, and the expression of this element with respect to the basis $\{w^j\}_{j=0}^{p-1}$ is known from Proposition \ref{w-highpoly}. Since the coordinates therein are determined up to multiplication by elements $r_i^{(j)}$ of $\mathcal{O}_K$, we look for the value of $v_K(\mu_{j,i}^{(k)})$. Taking valuations in \eqref{coordbasis} gives $$v_K(\mu_{j,i}^{(k)})=v_K(C(w^j,w^{k+i}))+\nu_j-\nu_k-n_i.$$ The first statement is immediate: if $j\not\equiv m\,(\mathrm{mod}\,2)$, then $C(w^j,w^{k+i})=0$, so $v_K(C(w^j,w^{k+i}))=\infty$ and $\mu_{j,i}^{(k)}=0$. 

From now on, we assume $j\equiv m\,(\mathrm{mod}\,2)$. Let us prove (2). We assume that $m<p-1$ and $j<m$. Then, we have that $$v_K(\mu_{j,i}^{(k)})\geq2e+\frac{p-1}{2}+\frac{m-j}{2}+\nu_j-\nu_k-n_i.$$ Note that since $\pi_K^{-\nu_k-n_i}w^{k+i}\in\mathfrak{A}_{\theta}$, Lemma \ref{lemasubr} gives that $\nu_k+n_i\leq e+\frac{p-1}{2}+\nu_m$. Then $$v_K(\mu_{j,i}^{(k)})\geq e+\frac{m-j}{2}+\nu_j-\nu_m.$$ Now, since $m<p-1$, Proposition \ref{lemaparam} (7) gives that $\nu_m<e+\Big\lceil\frac{m}{2}\Big\rceil$, and since $\frac{m-j}{2}=\Big\lceil\frac{m}{2}\Big\rceil-\Big\lceil\frac{j}{2}\Big\rceil$, we have that $v_K(\mu_{j,i}^{(k)})>\nu_j-\Big\lceil\frac{j}{2}\Big\rceil$. We claim that this last quantity is always non-negative. Indeed, if $a_0>0$ the claim is trivial, and if $a_0=0$ then $a>\frac{p}{2}$, so $\nu_j=\Big\lfloor(j+1)\frac{a}{p}\Big\rfloor\geq\Big\lfloor\frac{j+1}{2}\Big\rfloor=\Big\lceil\frac{j}{2}\Big\rceil$. This proves that $\overline{\mu}_{j,i}^{(k)}=0$. On the other hand, if $j>m+d_m$ and $j\equiv m\,(\mathrm{mod}\,2)$, we have that $\nu_j>\nu_m+\frac{j-m}{2}$. Now, $$v_K(\mu_{j,i}^{(k)})=e+\frac{p-1}{2}-\frac{j-m}{2}+\nu_j-\nu_k-n_i>e+\frac{p-1}{2}+\nu_m-\nu_k-n_i,$$ and this is non-negative again by Lemma \ref{lemasubr}. Therefore, $v_K(\mu_{j,i}^{(k)})>0$ and $\overline{\mu}_{j,i}^{(k)}=0$.

Let us focus on the case $m\leq j\leq m+d_m$, so that $\nu_j=\nu_m+\frac{j-m}{2}$. By \eqref{coordbasis} and Lemma \ref{lemanuiE}, we have that \begin{equation*}
    \begin{split}
        v_K(\mu_{j,i}^{(k)})&=e+\frac{p-1}{2}-\frac{j-m}{2}+\nu_j-\nu_k-n_i\\&=e+\frac{p-1}{2}+\nu_m-\nu_k-n_i\\&=e+\frac{p-1}{2}+ma_0+\Big\lfloor(m+1)\frac{a}{p}\Big\rfloor-(k+i)a_0-\Big\lfloor i\frac{a}{p}\Big\rfloor-\Big\lfloor(k+1)\frac{a}{p}\Big\rfloor-\epsilon,
    \end{split}
\end{equation*} where $\epsilon=1$ if $h\in E$ and $\epsilon=0$ if $h\notin E$. Since $ma_0=(k+i)a_0-(p-1)a_0$ and $$\Big\lfloor(m+1)\frac{a}{p}\Big\rfloor-\Big\lfloor i\frac{a}{p}\Big\rfloor-\Big\lfloor(k+1)\frac{a}{p}\Big\rfloor=-(p-1)\frac{a}{p}-\widehat{(m+1)\frac{a}{p}}+\widehat{i\frac{a}{p}}+\widehat{(k+1)\frac{a}{p}},$$ we have that $$v_K(\mu_{m,i}^{(k)})=e+\frac{p-1}{2}-(p-1)a_0-(p-1)\frac{a}{p}-\widehat{(m+1)\frac{a}{p}}+\widehat{i\frac{a}{p}}+\widehat{(k+1)\frac{a}{p}}-\epsilon.$$ Next, we note that $a+(p-1)a_0=\nu_{p-1}=e+\frac{p-1}{2}$, so $$v_K(\mu_{m,i}^{(k)})=\frac{a}{p}-\widehat{(m+1)\frac{a}{p}}+\widehat{i\frac{a}{p}}+\widehat{(k+1)\frac{a}{p}}-\epsilon.$$ Replacing $i=p-h$, we find that $$v_K(\mu_{m,i}^{(k)})=\frac{a}{p}-\widehat{(m+1)\frac{a}{p}}+1-\widehat{h\frac{a}{p}}+\widehat{(k+1)\frac{a}{p}}-\epsilon.$$ Let $\gamma=\frac{a}{p}-\widehat{h\frac{a}{p}}+\widehat{(k+1)\frac{a}{p}}$. Since $\frac{a}{p}-h\frac{a}{p}+(k+1)\frac{a}{p}=(m+1)\frac{a}{p}$, we have $$\gamma=\begin{cases}
\widehat{(m+1)\frac{a}{p}} & \hbox{if }0\leq\gamma<1, \\
\widehat{(m+1)\frac{a}{p}}-1 & \hbox{if }-1\leq\gamma<0, \\
\widehat{(m+1)\frac{a}{p}}+1 & \hbox{if }1\leq\gamma<2, \\
\end{cases}$$ Therefore, $$v_K(\mu_{j,i}^{(k)})=\begin{cases}
1-\epsilon & \hbox{if }0\leq\gamma<1, \\
-\epsilon & \hbox{if }-1\leq\gamma<0, \\
2-\epsilon & \hbox{if }1\leq\gamma<2. \\
\end{cases}$$

Assume that $h\notin E$, so $\epsilon=0$. Then $v_K(\mu_{j,i}^{(k)})=0$ if and only if $-1\leq\gamma<0$. The first inequality is trivial, and the other one is equivalent to $\frac{a}{p}+\widehat{(k+1)\frac{a}{p}}<\widehat{h\frac{a}{p}}$, which proves (3). Otherwise, if $h\in E$, then $\epsilon=1$ and $v_K(\mu_{j,i}^{(k)})=0$ if and only if $0\leq\gamma<1$. Now it is the first inequality that is always true. Indeed, if $\widehat{h\frac{a}{p}}>\widehat{(k+1)\frac{a}{p}}$, since $h\in E$ we have by definition that $h\leq k+1$, which contradicts our hypothesis, so $\widehat{h\frac{a}{p}}\leq\widehat{(k+1)\frac{a}{p}}$. On the other hand, the second inequality may be rewritten as $\frac{a}{p}+\widehat{(k+1)\frac{a}{p}}<\widehat{h\frac{a}{p}}+1$, finishing the proof of (4).
\end{proof}

\begin{rmk}\normalfont If $k+i>p-1$, Propositions \ref{lemaparam} (6) and \ref{proentries} show that $\overline{\mu}_{p-1,i}^{(k)}=0$. This follows from (1) when $m=k+i-(p-1)$ is odd, and when it is even we have that $p-1>m+d_m$ because $\nu_{p-1}-\nu_{p-3}\geq2$, so it is implied by (2).
\end{rmk}

\begin{rmk}\normalfont If $m=p-1$, the behaviour of $\mu_{j,i}^{(k)}$ for $j<m$ and $j\equiv m\,(\mathrm{mod}\,2)$ is trickier. In that case, we have that $\nu_{p-1}=n_{p-1}=e+\frac{p-1}{2}$, and consequently $v_K(\mu_{j,i}^{(k)})\geq\nu_j-\frac{j}{2}$. This may vanish, for instance for $j=0$. In those cases, the vanishing of $\overline{\mu}_{j,i}^{(k)}$ depends on whether the coefficient $r_j^{(p-1)}\in\mathcal{O}_K$ of the expression of $w^{k+i}$ as in Proposition \ref{w-highpoly} is a unit or not.
\end{rmk}

\subsection{The $\mathfrak{A}_{L/K}$-freeness of $\mathcal{O}_L$}

Let $n$ be the length of the continued fraction expansion of $\frac{\ell}{p}$. In this part, we complete the proof of Theorem \ref{maintheorem} (4), that is, a necessary and sufficient condition for the $\mathfrak{A}_{L/K}$-freeness of $\mathcal{O}_L$ is that $n\leq 4$.

\subsubsection{Sufficiency}

We devote this section to proving the following implication.

\begin{pro}\label{statementsuff} If $n\leq 4$, then $\mathfrak{A}_{\theta}$ is $\mathfrak{A}_{L/K}$-principal for $\theta=\pi_L^a$.
\end{pro}

It is seen easily that $n\leq 2$ if and only if $a=0$ or $a\mid p-1$, in which cases we already know from Proposition \ref{propequalorders} that $\mathfrak{A}_{\theta}=\mathfrak{A}_{L/K}$. Thus, we assume that $n=3$ or $n=4$. From Proposition \ref{lemaparamE} we know that \begin{align*}
    &E=\{q_0,q_1+q_0,2q_1+q_0,\dots,(a_2-1)q_1+q_0,q_2\} && \hbox{if }n=3, \\
    &E=\{q_0,q_1+q_0,2q_1+q_0,\dots,(a_2-1)q_1+q_0,q_2,q_3+q_2,2q_3+q_2,\dots,(a_4-1)q_3+q_2\} && \hbox{if }n=4.
\end{align*}

Let us choose $k=q_2-1$. We will eventually prove that for some unit $u\in\mathcal{O}_K^*$, the element $$\alpha=u+\pi_K^{-\nu_k}w^k$$ works as an $\mathfrak{A}_{L/K}$-generator of $\mathfrak{A}_{\theta}$ (we make implicit the dependence of $\alpha$ on $u$). Here $u$ is multiplying the identity element of $H$, which corresponds to the case $k=0$ and which we denote by $1$. Consequently, with our notation, we have $M(1)=(\mu_{j,i}^{(0)})_{j,i=0}^{p-1}$. Now, $\alpha$ is a generator if and only if $\mathrm{det}(\overline{M(\alpha)})\neq0$. We have that $$M(\alpha)=uM(1)+M(\pi_K^{-\nu_k}w^k).$$ Clearly, the matrix $M(1)$ is diagonal and $\overline{\mu}_{ii}^{(0)}=1$ (resp. $0$) if $n_i=\nu_i$ (resp. $n_i<\nu_i$). Since $n>2$, we have that $\mathfrak{A}_{L/K}\neq\mathfrak{A}_{\theta}$, so there is some $0$ in the diagonal of $M(1)$. We proceed to study the matrix $M(\pi_K^{-\nu_k}w^k)$ for $k>0$ using Propositions \ref{proentrieseasy} and \ref{proentries}. As in the previous part, for $0\leq i\leq p-1$ we denote $h=p-i$.

\begin{lema}\label{lemaentrieseasy} Assume that $n\leq 4$. If $h\geq k+1$, then $\overline{\mu}_{k+i,i}^{(k)}=1$ and $\overline{\mu}_{j,i}^{(k)}=0$ for $j\neq k+i$.
\end{lema}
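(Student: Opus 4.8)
The plan is to observe that the hypothesis $h\geq k+1$ puts us in the easy regime of Proposition \ref{proentrieseasy} and then to determine the single surviving diagonal coefficient. Since $k=q_2-1$, the condition $h=p-i\geq k+1=q_2$ is equivalent to $k+i\leq p-1$, so Proposition \ref{proentrieseasy} applies to every pair $(k,i)$ in the statement. Its part (1) gives at once $\overline{\mu}_{j,i}^{(k)}=0$ for all $j\neq k+i$, which is half of what we want. For the diagonal entry $\overline{\mu}_{k+i,i}^{(k)}$ I would argue by cases following parts (2) and (3): if $h\in E$, part (3) yields the value $1$ directly; if $i=0$, then $h=p$ and the first line of part (2) again gives $1$. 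The only remaining configuration is $q_2<h<p$ with $h\notin E$, where part (2) returns $1$ exactly when $\widehat{(k+1)\frac{a}{p}}<\widehat{h\frac{a}{p}}$, that is, when $\widehat{q_2\frac{a}{p}}<\widehat{h\frac{a}{p}}$.

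Thus the crux is the inequality $\widehat{q_2\frac{a}{p}}<\widehat{h\frac{a}{p}}$ for every $h$ with $q_2<h<p$ and $h\notin E$. I would deduce it from the stronger claim that the integers $h\in\{1,\dots,p\}$ with $\widehat{h\frac{a}{p}}\leq\widehat{q_2\frac{a}{p}}$ are exactly the points $q_{2,r}=rq_3+q_2$ from the proof of Lemma \ref{lemaparamE}, for $0\leq r\leq a_4$ (with $q_{2,a_4}=q_4=p$), all of which lie in $E\cup\{p\}$ by Lemma \ref{lemaparamE}. Granting this, any $h$ in the remaining configuration avoids the list $\{q_{2,r}\}$ and hence has strictly larger fractional part than $q_2$. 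This is precisely where the choice $k+1=q_2\in E$ matters (legitimate since $2<n$ and $2$ is even): $q_2$ is the record minimum from which the fractional parts descend to the origin in steps of $\frac{1}{p}$.

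To prove the claim I would reuse the computation inside the proof of Lemma \ref{lemaparamE}, specialized to the even index $2$. It shows $\widehat{q_{2,r}\frac{a}{p}}=\widehat{q_3\frac{a}{p}}+\widehat{q_{2,r-1}\frac{a}{p}}-1$, and Proposition \ref{propcontfrac}(4) gives $\widehat{q_3\frac{a}{p}}=\frac{p-1}{p}$ (since $n-1$ is odd for $n=4$); hence $\widehat{q_{2,r}\frac{a}{p}}=\widehat{q_2\frac{a}{p}}-\frac{r}{p}$. Evaluating at $r=a_4$, where $q_{2,a_4}=p$ has fractional part $0$, forces $\widehat{q_2\frac{a}{p}}=\frac{a_4}{p}$, so the fractional parts $\widehat{q_{2,r}\frac{a}{p}}$ run through every multiple $\frac{m}{p}$ with $0\leq m\leq a_4$. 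Since each value $\frac{m}{p}$ is the fractional part of a unique residue modulo $p$, the $q_{2,r}$ exhaust $\{h:\widehat{h\frac{a}{p}}\leq\widehat{q_2\frac{a}{p}}\}$, as claimed. For $n=3$ the argument collapses: there $q_2=q_{n-1}$, so Proposition \ref{propcontfrac}(4) gives $\widehat{q_2\frac{a}{p}}=\frac{1}{p}$ outright, and the only integers with fractional part at most $\frac{1}{p}$ are $q_2$ and $p$. I expect the genuine difficulty to be this counting step—checking that the minimal spacing $\frac{1}{p}$ leaves no room for a stray non-record point of small fractional part—while the case analysis itself is routine once Proposition \ref{proentrieseasy} is in hand.
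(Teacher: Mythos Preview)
Your argument is correct and follows the same route as the paper: reduce via Proposition~\ref{proentrieseasy} to the inequality $\widehat{q_2\frac{a}{p}}<\widehat{h\frac{a}{p}}$ for $q_2<h<p$ with $h\notin E$, then show that every $h$ with $\widehat{h\frac{a}{p}}\leq\widehat{q_2\frac{a}{p}}$ lies in $E\cup\{p\}$. The only difference is cosmetic: for $n=4$ the paper argues geometrically that any such $h$ is within modular distance $<\|q_3\frac{a}{p}\|=\frac{1}{p}$ of some $a_4'q_3+q_2$ and hence equals it, whereas you compute $\widehat{q_2\frac{a}{p}}=\frac{a_4}{p}$ explicitly and exhaust the $a_4+1$ residues of small fractional part by the list $q_{2,r}$. (One nitpick: your parenthetical ``since $n-1$ is odd for $n=4$'' should read ``since $n=4$ is even'' to match the statement of Proposition~\ref{propcontfrac}(4), though the conclusion $\widehat{q_3\frac{a}{p}}=\frac{p-1}{p}$ is correct either way.)
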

\begin{proof}
Following Proposition \ref{proentrieseasy}, it is enough to prove that if $h\notin E$ and $h\neq p$, then $\widehat{(k+1)\frac{a}{p}}<\widehat{h\frac{a}{p}}$. If $n=3$, Proposition \ref{propcontfrac} (4) gives that $\widehat{(k+1)\frac{a}{p}}=\widehat{q_2\frac{a}{p}}=\frac{1}{p}<\widehat{h\frac{a}{p}}$, since $p>h>q_2$. If $n=4$, in this case by Proposition \ref{propcontfrac} (4) we have that $||q_3\frac{a}{p}||=\frac{1}{p}$. If $\widehat{h\frac{a}{p}}<\widehat{q_2\frac{a}{p}}$, then the point $M_h$ in the circle $C$ corresponding to $h$ (see Note \ref{circleC}) is between $M_{q_4}$ and $M_{q_2}$. Since $h<q_4=p$, Lemma \ref{lemadensity} gives that $h=q_{2,r}$ for some $0\leq r<a_4$, in particular $h\in E$, which is a contradiction. Therefore $\widehat{(k+1)\frac{a}{p}}\leq\widehat{h\frac{a}{p}}$, and the equality is not possible because $h\notin E$ and $k+1=q_2\in E$.
\end{proof}

\begin{lema}\label{lemaentries} Suppose that $n\leq 4$. If $h<k+1$ and we call $m=k+1-h$, $\overline{\mu}_{m,i}^{(k)}\neq0$ and $\overline{\mu}_{j,i}^{(k)}=0$ for $j<m$.
\end{lema}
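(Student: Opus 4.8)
The plan is to deduce both claims from Proposition \ref{proentries} specialised to $k=q_2-1$, so that $k+1=q_2$ and $m=k+i-(p-1)=q_2-h$ runs through $1\le m\le q_2-1$. Because $q_2$ is a proper convergent denominator (as $n\ge 3$) we have $q_2\le p-1$, hence $m<p-1$; the vanishing $\overline{\mu}_{j,i}^{(k)}=0$ for $j<m$ is then immediate, since part (1) of Proposition \ref{proentries} disposes of the indices $j\not\equiv m\ (\mathrm{mod}\ 2)$ and part (2) disposes of the indices $j<m$ with $j\equiv m\ (\mathrm{mod}\ 2)$. Thus only the nonvanishing of $\overline{\mu}_{m,i}^{(k)}$ needs an argument, and this is equivalent to $v_K(\mu_{m,i}^{(k)})=0$.

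I would run this through the quantity $\gamma=\frac{a}{p}-\widehat{h\frac{a}{p}}+\widehat{q_2\frac{a}{p}}$ from the proof of Proposition \ref{proentries} (here $k+1=q_2$), which determines $v_K(\mu_{m,i}^{(k)})$. Since $q_2\in E$ by Lemma \ref{lemaparamE}, the defining property of $E$ gives $\widehat{h\frac{a}{p}}>\widehat{q_2\frac{a}{p}}$ for all $1\le h<q_2$, whence $\gamma<\frac{a}{p}<1$; this rules out the branch $1\le\gamma<2$ and leaves $v_K(\mu_{m,i}^{(k)})\in\{1-\epsilon,\,-\epsilon\}$ according as $\gamma\ge 0$ or $\gamma<0$, where $\epsilon=1$ exactly when $h\in E$. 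If $h\in E$ the conclusion is free: as $\mathfrak{A}_{\theta}$ is an $\mathfrak{A}_{L/K}$-module, $\mu_{m,i}^{(k)}$ is an $\mathcal{O}_K$-coordinate of $(\pi_K^{-n_i}w^i)\cdot(\pi_K^{-\nu_k}w^k)\in\mathfrak{A}_{\theta}$ in the integral basis of Proposition \ref{propintbasisL}, so $v_K(\mu_{m,i}^{(k)})\ge 0$, and with $\epsilon=1$ this forces $\gamma\ge 0$ and $v_K(\mu_{m,i}^{(k)})=0$.

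The real obstacle is the case $h\notin E$, where I must show $\gamma<0$, i.e. $\widehat{h\frac{a}{p}}>\frac{a}{p}+\widehat{q_2\frac{a}{p}}$. Writing $h=q_2-m$ and using the modular identity $\widehat{h\frac{a}{p}}-\widehat{q_2\frac{a}{p}}=1-\widehat{m\frac{a}{p}}$, this becomes the clean inequality $\widehat{m\frac{a}{p}}<1-\frac{a}{p}$, to be proved for those $m\in[1,q_2-1]$ that are not multiples of $q_1$ (precisely the $m$ with $h\notin E$, by Lemma \ref{lemaparamE}). I would decompose $m=sq_1+t$ with $1\le t\le q_1-1$ and $0\le s\le a_2-1$, set $\delta=||q_1\frac{a}{p}||=1-a_1\frac{a}{p}$ (legitimate because $q_1$ has odd index, so $\widehat{q_1\frac{a}{p}}>\frac12$ by Proposition \ref{propcontfrac}), and use $q_1\frac{a}{p}=a_1\frac{a}{p}=1-\delta$ together with $\widehat{t\frac{a}{p}}=t\frac{a}{p}$ (valid since $t<q_1$) to reduce $\widehat{m\frac{a}{p}}$ to $\widehat{t\frac{a}{p}-s\delta}$. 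The crux is to rule out a wraparound, i.e. to show $t\frac{a}{p}-s\delta\ge 0$: this is equivalent to $[0;a_2,a_3,\dots]<\frac{t}{s}$, which holds because $[0;a_2,a_3,\dots]<\frac{1}{a_2}\le\frac{1}{s}\le\frac{t}{s}$ for $s\le a_2-1$. Granting this, $\widehat{m\frac{a}{p}}=t\frac{a}{p}-s\delta$, and the target $\widehat{m\frac{a}{p}}<1-\frac{a}{p}$ follows from $(t+1)\frac{a}{p}\le a_1\frac{a}{p}=1-\delta<1+s\delta$. I expect the precise continued-fraction bookkeeping in this last case, and the justification of the no-wraparound bound, to be the only delicate point; the distinction between $n=3$ and $n=4$ enters only through Lemma \ref{lemaparamE}, in recognising which $m$ are multiples of $q_1$.
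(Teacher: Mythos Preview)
Your proposal is correct. The vanishing part and the case $h\in E$ are essentially the same as in the paper: both rely on the inequality $\widehat{h\frac{a}{p}}>\widehat{q_2\frac{a}{p}}$ coming from $q_2\in E$, which is exactly the condition in Proposition~\ref{proentries}~(4). Your extra remark that integrality of $\mu_{m,i}^{(k)}$ forces $\gamma\ge 0$ is a nice way to see why that condition suffices, but it is already implicit in the statement of Proposition~\ref{proentries}~(4).

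Where your argument genuinely diverges from the paper is the case $h\notin E$. The paper argues geometrically on the circle $C$: it shows $\widehat{h\frac{a}{p}}>\frac{a}{p}$ by locating $M_h$ relative to the points $M_{a_2'q_1+1}$ (and, for $n=4$, also $M_{a_4'q_3+q_2}$) and invoking the best-approximation property of convergents (Proposition~\ref{propcontfrac}~(2)); it then shows $\widehat{h\frac{a}{p}}-\frac{a}{p}>\widehat{q_2\frac{a}{p}}$ by the same device applied to $h-1$. You instead pass to the variable $m=q_2-h$, reduce the target to the neat inequality $\widehat{m\frac{a}{p}}<1-\frac{a}{p}$, and verify it by an explicit continued-fraction computation using $m=sq_1+t$, $q_1\frac{a}{p}=1-\delta$, and the bound $[0;a_2,a_3,\dots]<\frac{1}{a_2}$ to rule out wraparound. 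This is a more algebraic route; it has the pleasant side effect that the cases $n=3$ and $n=4$ are handled uniformly (your characterisation ``$h\notin E$ iff $q_1\nmid m$'' is the same for both, since all elements of $E$ beyond $q_2$ are $\ge q_2$), whereas the paper's circle argument treats them separately. Both approaches are short; yours trades the geometric picture for a direct inequality chain.
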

\begin{proof}
We note that since $n>2$, we have that $k=q_2-1<p-1$, so $m<p-1$, and then the second part follows from Proposition \ref{proentries} (2). Now, we prove the first part. Suppose that $h\in E$; if we prove that $\widehat{(k+1)\frac{a}{p}}+\frac{a}{p}<\widehat{h\frac{a}{p}}+1$ we will be done by Proposition \ref{proentries} (4). Since $h<q_2$ and $q_2\in E$, we have $\widehat{h\frac{a}{p}}>\widehat{q_2\frac{a}{p}}=\widehat{(k+1)\frac{a}{p}}$. Then $$\widehat{h\frac{a}{p}}+1>\widehat{(k+1)\frac{a}{p}}+\frac{a}{p}.$$ On the contrary, if $h\notin E$, by Proposition \ref{proentries} (3) it is enough to prove that $\widehat{h\frac{a}{p}}>\frac{a}{p}+\widehat{(k+1)\frac{a}{p}}$. If $\widehat{h\frac{a}{p}}<\frac{a}{p}$, then we have that $\widehat{q_2\frac{a}{p}}<\widehat{h\frac{a}{p}}<\frac{a}{p}$ if $n=3$, and if $n=4$ it is also possible that $\widehat{q_4\frac{a}{p}}<\widehat{h\frac{a}{p}}<\widehat{q_2\frac{a}{p}}$. Since $h<q_2<q_4$, we can apply Corollary \ref{corodensity} in both cases to obtain that $h\in E$, which is a contradiction. Hence we obtain that $\widehat{h\frac{a}{p}}>\frac{a}{p}$. Finally, by Proposition \ref{propcontfrac} (2), $h-1<q_2$ implies that $||h\frac{a}{p}-\frac{a}{p}||>\widehat{q_2\frac{a}{p}}$, and since $\widehat{h\frac{a}{p}}>\frac{a}{p}$, $||h\frac{a}{p}-\frac{a}{p}||=\widehat{h\frac{a}{p}}-\frac{a}{p}$.
\end{proof}

Finally, we study the determinant of $\mathrm{M(\alpha)}$ modulo $\mathfrak{p}_K$.

\begin{pro}\label{prosufcondfreeness} Let us assume that $n\leq 4$. Then there is some $u\in\mathcal{O}_K^*$ such that $\mathrm{det}(M(\alpha))\not\equiv0\,(\mathrm{mod}\,\mathfrak{p}_K)$.
\end{pro}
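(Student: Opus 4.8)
The plan is to read $\det(M(\alpha))$ modulo $\mathfrak{p}_K$ as a polynomial in $\overline{u}$ over the residue field and to produce a unit whose reduction is not a root. Write $\overline{M(\alpha)}=\overline{u}\,D+S$, where $D=\overline{M(1)}$ is the diagonal $0$--$1$ matrix with $(i,i)$ entry $1$ exactly when $\nu_i=n_i$, and $S=\overline{M(\pi_K^{-\nu_k}w^k)}$. First I would expand along the $0$-th row. Since $\nu_0=n_0=0$, the self-loop contributes $\overline{u}$ at position $(0,0)$; and $w^k$ contributes nothing to row $0$, because for $k+i\le p-1$ Proposition \ref{proentrieseasy} places the unique nonzero entry at $j=k+i\ge k\ge 1$, while for $k+i>p-1$ Proposition \ref{proentries} (2) makes all entries with $j<m$ vanish, where $m=k+i-(p-1)\ge 1$. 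Hence row $0$ has a single nonzero entry and $\det\overline{M(\alpha)}=\overline{u}\cdot\det S'$, with $S'$ the principal minor on $\{1,\dots,p-1\}$.

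Next I would record the support of $S'$ from the two preceding lemmas. Column $i$ carries a distinguished nonzero entry---a $1$ when $k+i\le p-1$, a nonzero $\overline{\mu}^{(k)}_{m,i}$ with $m=k+i-(p-1)$ when $k+i>p-1$---precisely at the row $\sigma(i)\equiv i+k\pmod{p-1}$, where $\{1,\dots,p-1\}$ is identified with $\mathbb{Z}/(p-1)\mathbb{Z}$ (the index $p-1$ playing the role of $0$). Thus $\sigma$ is translation by $k$ on $\mathbb{Z}/(p-1)\mathbb{Z}$, and the only other entries are the self-loops of $\overline{u}D$ at the positions where $\nu_i=n_i$, together with the higher entries $\overline{\mu}^{(k)}_{j,i}$ with $m<j\le m+d_m$ permitted by Proposition \ref{proentries}. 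I would then decompose $\sigma$ into its $\gcd(k,p-1)$ cycles, each of length $L=(p-1)/\gcd(k,p-1)$.

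On a single shift-cycle the permutation (cycle-cover) expansion of the determinant has only two sources of nonzero terms: the full shift-cycle, whose product of the associated $1$'s and $\overline{\mu}^{(k)}_{m,i}$'s is a nonzero constant, and---only when every vertex of that cycle carries a self-loop---the all-diagonal cover contributing $\pm\,\overline{u}^{L}$. Consequently $\det S'(\overline{u})$ is, up to sign, a product over the cycles of factors each of the form $c$ or $c\pm\overline{u}^{L}$ with $c\neq 0$; in particular $\det S'(0)=\pm\prod(\text{pivot entries})\neq 0$, so $\det S'$ is a nonzero polynomial in $\overline{u}$. Each of its cyclic factors has at most $L$ roots, so their roots do not exhaust the nonzero residues, and I would choose $u\in\mathcal{O}_K^{*}$ with $\overline{u}\neq 0$ and $\det S'(\overline{u})\neq 0$; then $\det\overline{M(\alpha)}=\overline{u}\cdot\det S'(\overline{u})\neq 0$, as required. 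The specific choice $k=q_2-1$ enters here: it guarantees that $k+1=q_2\in E$ is a convergent denominator, which via Proposition \ref{propcontfrac} is exactly what forces the clean vanishing patterns of the two preceding lemmas that pin down $\sigma$.

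The hard part will be the bookkeeping of the super-pivot entries $\overline{\mu}^{(k)}_{j,i}$ with $m<j\le m+d_m$ in the last block: these provide extra edges $i\to j$ that could in principle join two distinct shift-cycles or spawn alternative nonzero cycle-covers within one cycle, breaking the two-term analysis above. I would handle them by ordering each cyclic block along the direction of $\sigma$ and showing, again through Proposition \ref{propcontfrac} and the rapid growth of $\{\nu_i\}$ from Proposition \ref{lemaparam} (4),(5), that every such extra entry lies strictly on one side of the shift edge in this cyclic order, so that no cycle-cover other than the full cycle or the all-self-loop cover survives. Once this orientation statement is established the determinant reduces to the product of per-cycle binomials described above, and the existence of a suitable unit $u$ follows immediately, completing the proof that $\mathfrak{A}_{\theta}$ is $\mathfrak{A}_{L/K}$-principal via Proposition \ref{prohnormalfreeness}.
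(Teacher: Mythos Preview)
Your overall strategy---writing $\det(\overline{M(\alpha)})=\overline{u}\cdot P(\overline{u})$ for a polynomial $P$ over the residue field and then choosing a unit whose reduction avoids the roots of $P$---is exactly the paper's. The gap is in your analysis of $P$. You want to factor $\det(M')$ as a product over the cycles of the shift-by-$k$ permutation on $\{1,\dots,p-1\}$, but such a factorization would require $M'$ to be block-diagonal (after a permutation) with blocks indexed by those cycles. That in turn requires every nonzero off-diagonal entry of $S'=\overline{M(\pi_K^{-\nu_k}w^k)}$ to lie within a single cycle, and the ``super-pivot'' entries $\overline{\mu}^{(k)}_{j,i}$ with $m<j\le m+d_m$ have no reason to respect the cycle decomposition: for such an entry $j-i\equiv k+(j-m)\pmod{p-1}$ with $j-m$ an arbitrary even number, and there is nothing forcing $j-m$ to be a multiple of $\gcd(k,p-1)$. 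Your proposed ``orientation'' fix speaks only of where an extra edge sits relative to the shift edge \emph{inside} a cycle; it says nothing about edges joining two distinct cycles. Until those cross-cycle entries are controlled, neither your computation of $P(0)$ (which relies on the shift being the unique nonzero permutation term in $\det S'$) nor your degree bound (which relies on one whole cyclic factor collapsing to a constant) is justified.

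The paper sidesteps this entirely by not attempting to factor. It shows directly that the constant term $P(0)=\det(S')$ equals, up to sign, the single product $b_0$ along the shift permutation, via a triangular argument: for $1\le i\le p-k-1$ column $i$ of $S'$ has its unique nonzero entry at row $k+i$, which uses up rows $k+1,\dots,p-1$; then for $i=p-1,p-2,\dots,p-k$ the vanishing $\overline{\mu}^{(k)}_{j,i}=0$ for $j<m_i$ (Lemma preceding the Proposition) forces, by downward induction, $\tau(i)=m_i$ as the only remaining admissible row. The degree bound $\deg P\le p-2$ is immediate from the observation that at least one diagonal entry of $\overline{M(1)}$ vanishes (since $n>2$ gives $\mathfrak{A}_{L/K}\neq\mathfrak{A}_\theta$). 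A nonzero polynomial of degree at most $p-2$ over a field of at least $p$ elements misses some nonzero value, and that finishes the proof. The extra entries above the pivots play no role whatsoever in this argument, so the ``hard part'' you identified simply does not arise.
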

\begin{proof}
Note that $\overline{\mu}_{0,0}^{(0)}=1$ and $\overline{\mu}_{0,i}^{(k)}=0$ for every $i\geq0$ and $k=q_2-1$, by Propositions \ref{proentrieseasy} (1) and \ref{proentries} (1), (2). Then the first row of $\overline{M(\alpha)}$ is $(\overline{u},0,\dots,0)$, and developing the determinant by this row, we obtain $\mathrm{det}(\overline{M(\alpha)})=\overline{u}\mathrm{det}(M')$, where $M'\in\mathcal{M}_{p-1}(\mathcal{O}_K/\mathfrak{p}_K)$.

To ease the notation, we number the entries of $M'$ with subscripts starting in $1$. We will study $\mathrm{det}(M')$ as a sum of terms corresponding to permutations of $(1,\dots,p-1)$. Since $\overline{\mu}_{0,0}^{(0)}=1$ and the diagonal of $M(1)$ has some zero, so does the diagonal of $M'$, and the non-zero diagonal elements in $M'$ are $\overline{u}$. Recall that for the $i$-th column of $M'$, $1\leq i\leq p-1$, we write $h=p-i$ (and note that the dependence of $h$ on $i$ is implicit). Within the first $p-k-1$ columns of $M'$ (that is, $1\leq i\leq p-k-1$), we have $h\geq k+1$. Then, we know from Lemma \ref{lemaentrieseasy} that the terms above the main diagonal of $M'$ are zero, and that the only non-zero terms below are the entries of a subdiagonal of $1$'s in positions $(k+i,i)$, $1\leq i\leq p-k-1$. Next, we consider the other columns, for which $i\geq p-k$. If we call $m_i=k+i-(p-1)$, we have from Lemma \ref{lemaentries} that $\overline{\mu}_{m_i,i}^{(k)}\neq0$. The subscripts of all these non-zero terms correspond to (the inverse of) the permutation of $(1,\dots,p-1)$ that sends $i$ to $k+i$ if $1\leq i\leq p-k-1$ and to $m_i$ otherwise, so their product $$b_0=\prod_{i=1}^{p-k-1}\overline{\mu}_{k+i,i}^{(k)}\prod_{i=p-k}^{p-1}\overline{\mu}_{m_i,i}^{(k)}$$ is a non-zero summand of $\mathrm{det}(M')$. Since $\overline{\mu}_{j,i}^{(k)}=0$ for $j<m$ also by Lemma \ref{lemaentries}, $m_{p-k}=1$, $m_{p-1}=k$ and the subdiagonal of $1$'s in the first $p-k-1$ columns starts in the $k+1$-th row, the aforementioned permutation is the unique one sending $k+i$ to $i$ for every $1\leq i\leq p-k-1$ that corresponds to a non-zero summand of $\mathrm{det}(M')$. Necessarily, any non-zero summand other than $b_0$ includes some factor from the first $p-k-1$ entries of the main diagonal of $M'$, and therefore depends on $\overline{u}$. In other words, $b_0$ is the unique non-zero summand of $\mathrm{det}(M')$ independent of $\overline{u}$. Since the diagonal of $M'$ has at least one zero entry (and hence independent of $\overline{u}$), we deduce that there is a polynomial $P\in\mathcal{O}_K[X]$ of degree at most $p-2$ and independent of $u$ such that $\mathrm{det}(M(\alpha))\equiv uP(u)\,(\mathrm{mod}\,\mathfrak{p}_K)$ and $\overline{P(0)}\neq0$. Since $P$ is of degree at most $p-2$, we can choose $u$ such that $\overline{P(u)}\neq0$, and therefore $\mathrm{det}(M(\alpha))\not\equiv0\,(\mathrm{mod}\,\mathfrak{p}_K)$.
\end{proof}

If we choose $u\in\mathcal{O}_K^*$ such that $\mathrm{det}(M(\alpha))\not\equiv0\,(\mathrm{mod}\,\mathfrak{p}_K)$, we find that $\alpha$ is indeed an $\mathfrak{A}_{L/K}$-generator of $\mathfrak{A}_{\theta}$ for $\theta=\pi_L^a$, proving Proposition \ref{statementsuff}.

\begin{example}\normalfont Let us consider a degree $7$ extension $L/K$ of $7$-adic fields with dihedral normal closure $\widetilde{L}$ such that $t=e=1$ (note that such an extension exists by Proposition \ref{progivenram}). Since $K/\mathbb{Q}_7$ is unramified, we can take $\pi_K=7$ as uniformizer. We calculate $\ell=a=4$, which does not divide $p-1=6$, and since since $t>\frac{2pe}{p-1}-2$, we must use Theorem \ref{maintheorem} (4). We have $$\frac{\ell}{p}=[0;1,1,3],$$ which has $n=3$ and therefore $\mathcal{O}_L$ is $\mathfrak{A}_{L/K}$-free. If we want to find a generator, we must go over the proof of Proposition \ref{prosufcondfreeness}. The values of $\nu_i$ and $n_i$ are shown in the following table: \begin{center}
\begin{tabular}{|c|c|c|c|c|c|c|} \hline
    i & 1 & 2 & 3 & 4 & 5 & 6 \\ \hline
    $\nu_i$ & 1 & 1 & 2 & 2 & 3 & 4 \\ \hline
    $n_i$ & 0 & 1 & 1 & 2 & 3 & 4 \\ \hline
\end{tabular}
\end{center} 

Since $q_2=2$, we set $k=1$. Let $u\in\mathcal{O}_K^*$ and take $\alpha=u+\frac{w}{7}$, so that $M(\alpha)=uM(1)+M(\frac{w}{7})$. Since $\nu_1\neq n_1$ and $\nu_3\neq n_3$, $\overline{M(1)}$ has zeros in positions $(1,1)$ and $(3,3)$, while the other entries of its diagonal are $1$. As for the matrix $\overline{M(\frac{w}{7})}$, the only $0\leq i\leq 6$ for which $k+i>p-1=6$ is $i=6$. Thus, for this value of $i$ we need to use Lemma \ref{lemaentries}. Set $m=k+i-(p-1)=1$. Since $\nu_1=1$, $\nu_3=2$ and $\nu_5=3$, we have that $d_1=4$ and the entries of $\overline{M(\frac{w}{7})}$ in positions $(1,6)$, $(3,6)$ and $(5,6)$ are non-zero. For the remaining values of $i$ (i.e. $0\leq i\leq 5$), we use Lemma \ref{lemaentrieseasy} to identify a subdiagonal of $1$ just below the main diagonal. Then $$\overline{M(\alpha)}=\begin{pmatrix}
\overline{u} & 0 & 0 & 0 & 0 & 0 & 0 \\
1 & 0 & 0 & 0 & 0 & 0 & \overline{\mu}_{1,6}^{(1)} \\
0 & 1 & \overline{u} & 0 & 0 & 0 & 0 \\
0 & 0 & 1 & 0 & 0 & 0 & \overline{\mu}_{3,6}^{(1)} \\
0 & 0 & 0 & 1 & \overline{u} & 0 & 0 \\
0 & 0 & 0 & 0 & 1 & \overline{u} & \overline{\mu}_{5,6}^{(1)} \\
0 & 0 & 0 & 0 & 0 & 1 & \overline{u}
\end{pmatrix}.$$ In this case we find that $\mathrm{det}(\overline{M(\alpha)})=\overline{u}\overline{P(u)}$, where $P(x)=-\mu_{1,6}^{(1)}$. Thus, in this case any unit $u\in\mathcal{O}_K^*$ does the job. For instance, for $u=1$ we see that $\alpha=1+\frac{w}{7}$ is a generator of $\mathcal{O}_L$ as $\mathfrak{A}_{L/K}$-module.
\end{example}

\subsubsection{Necessity}

Our considerations will culminate in the proof of the following.

\begin{pro}\label{statementnecess} If $n\geq 5$, then $\mathfrak{A}_{\theta}$ is not $\mathfrak{A}_{L/K}$-principal for $\theta=\pi_L^a$.
\end{pro}

For the remainder of this section, we assume that $n\geq 5$. We have to prove that given any element $\alpha\in\mathcal{O}_L$, $\mathrm{det}(M(\alpha))\equiv0\,(\mathrm{mod}\,\mathfrak{p}_K)$. We will proceed as in the proof for the corresponding result in the cyclic case \cite[Proposition 3]{bertrandiasbertrandiasferton}, \cite[Chapitre II, Proposition 7]{fertonthesis}.

As usual, we number the columns of $M(\alpha)$ from $0$ to $p-1$. Let us call $n=2s+2$ if $n$ is even and $n=2s+1$ if $n$ is odd. We consider the columns of $M(\alpha)$ with index $i$ such that $h=p-i$ takes one of the following values \begin{equation}
    \begin{split}\label{valuesh}
        &h=2q_{2s-2}, \\
        &h=q_{2s-2}+rq_{2s-1}+q_{2s},\quad 0\leq r\leq a_{2s}.
    \end{split}
\end{equation}

\begin{lema}\label{lemanonfreeless} Assume that $n\geq5$. Let $h$ be as in \eqref{valuesh} and let $1\leq k\leq h-1$. If $\overline{\mu}_{j,i}^{(k)}\neq0$, then $j=k+i$ and $$\begin{cases}
k+1=q_{2s-2} & \hbox{if }h=2q_{2s-2}, \\
k+1=q_{2s-2,r'} & \hbox{if }h=q_{2s-2,r}+q_{2s},\hbox{ where }r\leq r'\leq a_{2s}.
\end{cases}$$
\end{lema}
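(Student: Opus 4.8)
The plan is to reduce immediately to the regime already analysed in Proposition \ref{proentrieseasy}. Since $h=p-i$, the hypothesis $1\leq k\leq h-1$ is precisely $k+i\leq p-1$, so the entries $\overline{\mu}_{j,i}^{(k)}$ are governed by that proposition. Part (1) gives at once that $\overline{\mu}_{j,i}^{(k)}=0$ whenever $j\neq k+i$; hence non-vanishing forces $j=k+i$, which is the first assertion. It then only remains to decide, for each admissible $k$, whether the single surviving entry $\overline{\mu}_{k+i,i}^{(k)}$ is non-zero, and to show that this happens exactly for the listed values of $k+1$.

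The next step is to locate the two families of $h$ relative to the set $E$. Using the description of $E$ in Lemma \ref{lemaparamE} together with $q_{2s}=a_{2s}q_{2s-1}+q_{2s-2}$, one checks that $2q_{2s-2}$ lies strictly between the consecutive elements $q_{2s-2}$ and $q_{2s-2}+q_{2s-1}$ of $E$ (here $q_{2s-1}>q_{2s-2}$), and that $q_{2s-2}+a'_{2s}q_{2s-1}+q_{2s}$ lies strictly between $q_{2s}$ and $q_{2s}+q_{2s+1}$; in both cases $h\notin E$. The hypothesis $n\geq 5$ is used here, as it guarantees $s\geq 2$, so that $q_{2s-2},q_{2s-1},q_{2s}$ and $q_{2s+1}$ are all defined. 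Consequently the governing criterion is the middle case of Proposition \ref{proentrieseasy} (2): for $h\neq p$ and $h\notin E$ we have $\overline{\mu}_{k+i,i}^{(k)}\neq 0$ if and only if $\widehat{(k+1)\frac{a}{p}}<\widehat{h\frac{a}{p}}$, that is, if and only if $M_{k+1}$ lies in the open arc of $C$ from $O$ to $M_h$.

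The heart of the argument is then a counting of lattice points on the circle: I must show that among $M_2,\dots,M_h$ the only points lying in that arc are $M_{q_{2s-2}}$ in the first case, and $M_{q_{2s-2}+a''_{2s}q_{2s-1}}$ with $a'_{2s}\leq a''_{2s}\leq a_{2s}$ in the second. For this I would reuse the description, extracted from the proof of Lemma \ref{lemaparamE}, of the intermediate points $rq_{2i+1}+q_{2i}$ interpolating between $M_{q_{2i}}$ and $M_{q_{2i+2}}$ at constant modular step $||q_{2i+1}\frac{a}{p}||$, together with Proposition \ref{propcontfrac} (2),(3),(4) to bound $\widehat{b\frac{a}{p}}$ from below for $b$ less than the appropriate convergent denominator. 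Since $2s-2$ is even, Proposition \ref{propcontfrac} (3) gives $\widehat{q_{2s-2}\frac{a}{p}}<\tfrac12$, so $M_{2q_{2s-2}}$ sits at height $2\widehat{q_{2s-2}\frac{a}{p}}$, while $M_{h}$ in the second family sits at height $\widehat{(q_{2s-2}+a'_{2s}q_{2s-1})\frac{a}{p}}+\widehat{q_{2s}\frac{a}{p}}$; one reads off which interpolating points fall strictly below these heights and discards every remaining $b\leq h$ by the minimality in Proposition \ref{propcontfrac} (2), via the modular distance $d(b,\cdot)$ of \eqref{distanceint}.

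The main obstacle I anticipate is exactly this exclusion of the remaining $b\leq h$, in particular the range $b\geq q_{2s-1}$, which can occur when $a_{2s-1}=1$ so that $2q_{2s-2}>q_{2s-1}$; there a direct appeal to Proposition \ref{propcontfrac} (2) no longer suffices and one must argue with the modular distances $d(b,\,rq_{2s-1}+q_{2s-2})$ themselves, exactly as in the computations of \cite[Chapitre II, Proposition 7]{fertonthesis} and \cite[Proposition 3]{bertrandiasbertrandiasferton} for the cyclic case. Once the occupancy of the arc is settled, translating it back through Proposition \ref{proentrieseasy} yields precisely the stated list of admissible values of $k+1$, completing the proof.
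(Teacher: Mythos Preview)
Your plan is correct and follows exactly the paper's approach: reduce via Proposition~\ref{proentrieseasy} to the inequality $\widehat{(k+1)\frac{a}{p}}<\widehat{h\frac{a}{p}}$ (after checking $h\notin E$), and then locate the admissible $k+1$ by analysing the arc $(O,M_h)$ using the interpolating points $rq_{2i+1}+q_{2i}$ and the best--approximation bound of Proposition~\ref{propcontfrac}~(2). The paper carries out precisely this arc--counting via the same case split (first $k+1\leq q_{2s}$ versus $k+1>q_{2s}$, then $k+1\geq\lambda$, and analogously for $h=2q_{2s-2}$), so the only difference is that you have left that step as an outline whereas the paper executes it in full; note also that the lemma only requires the forward implication, so you need not prove the ``exactly''.
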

\begin{proof}
It is straightforward to check that the values of $h$ in \eqref{valuesh} do not match with any of the elements in the description of $E$ in Lemma \ref{lemaparamE}, so $h\notin E$ for those values. Suppose that $\overline{\mu}_{j,i}^{(k)}\neq0$. By Proposition \ref{proentrieseasy} (2), we have that $\widehat{(k+1)\frac{a}{p}}<\widehat{h\frac{a}{p}}$ (in particular $k+1<h$). 

\begin{itemize}
    \item Assume that $h$ is as in the first line of \eqref{valuesh}, i.e. $h=2q_{2s-2}$. We will prove that $k+1=q_{2s-2}$. Note that $k+1<h=2q_{2s-2}<q_{2s}$.
    
    First, we prove that $\widehat{(k+1)\frac{a}{p}}>\widehat{q_{2s}\frac{a}{p}}$. If $n=2s+1$, then Proposition \ref{propcontfrac} (4) gives that $\widehat{q_{2s}\frac{a}{p}}=\frac{1}{p}$, and since $k+1<q_{2s}$, the inequality is strict. For $n=2s+2$, we need to discard the situation that $M_{k+1}$ is between $M_{q_{2s+2}}$ and $M_{q_{2s}}$ (see Note \ref{circleC}). In that case, since $k+1<q_{2s+2}$, we would have by Corollary \ref{corodensity} and our assumption that $k+1=q_{2s,r}$ with $r>0$, so $k+1>q_{2s}$, a contradiction.
    
    Suppose that $\widehat{(k+1)\frac{a}{p}}\leq\widehat{q_{2s-2}\frac{a}{p}}$. Then $M_{k+1}$ is between $M_{q_{2s}}$ and $M_{q_{2s-2}}$. Since in addition $k+1<q_{2s}$, again Corollary \ref{corodensity} gives that $k+1=q_{2s-2,r}$ with $0\leq r<a_{2s}$. Now, since $k+1<h=2q_{2s-2}$, necessarily $r=0$ and $k+1=q_{2s-2}$.
    
    Otherwise, suppose that $\widehat{q_{2s-2}\frac{a}{p}}\leq\widehat{(k+1)\frac{a}{p}}$. Let us assume that $k+1\neq q_{2s-2}$. Then $1\leq|k+1-q_{2s-2}|<q_{2s-2}$, so $||(k+1-q_{2s-2})\frac{a}{p}||>\widehat{q_{2s-2}\frac{a}{p}}$. But $\widehat{k+1\frac{a}{p}}<\widehat{h\frac{a}{p}}\leq2\widehat{q_{2s-2}\frac{a}{p}}$, and since $\widehat{q_{2s-2}\frac{a}{p}}\leq\widehat{(k+1)\frac{a}{p}}$, we obtain that $||(k+1-q_{2s-2})\frac{a}{p}||\leq\widehat{q_{2s-2}\frac{a}{p}}$, which is a contradiction. Necessarily $k+1=q_{2s-2}$, as we wanted.

    \item Assume that $h$ is as in the second line of \eqref{valuesh}. Call $\lambda=q_{2s-2,r}$ with $0\leq r\leq a_{2s}$, so $h=q_{2s}+\lambda$. We will prove that $\widehat{q_{2s}\frac{a}{p}}<\widehat{(k+1)\frac{a}{p}}<\widehat{\lambda\frac{a}{p}}$ and $k+1\leq q_{2s}$. This is enough for our purposes since then Remark \ref{densityrefinement} gives that $k+1$ is as stated. 
    
    As in the previous case, we prove that $\widehat{(k+1)\frac{a}{p}}>\widehat{q_{2s}\frac{a}{p}}$. In this one, for the case $n=2s+2$ we have that $k+1<q_{2s}+q_{2s-2,r}<q_{2s}+q_{2s+1}\leq q_{2s+2}$ and the same argument applies.
    
    Next, let us prove that $\widehat{(k+1)\frac{a}{p}}\leq\widehat{\lambda\frac{a}{p}}$. Assume that $\widehat{(k+1)\frac{a}{p}}>\widehat{\lambda\frac{a}{p}}$. Since $1\leq k+1,\lambda\leq q_{2s}$, $|k+1-\lambda|<q_{2s}$, so $\Big|\Big|(k+1-\lambda)\frac{a}{p}\Big|\Big|>\widehat{q_{2s}\frac{a}{p}}$. But at the same time $\widehat{(k+1)\frac{a}{p}}<\widehat{h\frac{a}{p}}\leq\widehat{q_{2s}\frac{a}{p}}+\widehat{\lambda\frac{a}{p}}$, so $\Big|\Big|(k+1-\lambda)\frac{a}{p}\Big|\Big|<\widehat{q_{2s}\frac{a}{p}}$, which is a contradiction.
    
    Let us check that $k+1\leq q_{2s}$. Arguing as before, we prove that $\widehat{(k+1)\frac{a}{p}}<\widehat{q_{2s}\frac{a}{p}}+\widehat{\lambda\frac{a}{p}}$. If $k+1>q_{2s}$, we deduce $\widehat{(k+1-q_{2s})\frac{a}{p}}<\widehat{\lambda\frac{a}{p}}$. But since $\lambda\in E$ and $1\leq k+1-q_{2s}<\lambda$ by assumption, we deduce that $\widehat{(k+1-q_{2s})\frac{a}{p}}>\widehat{\lambda\frac{a}{p}}$, which is a contradiction.
\end{itemize}
\end{proof}

\begin{lema} Suppose that $n\geq 5$. Let $h$ be as in \eqref{valuesh} and let $0\leq k\leq p-1$. If $k+1>h$ or $k=0$, then $\mu_{j,i}^{(k)}=0$ for every $0\leq j\leq p-1$.
\end{lema}
\begin{proof}
First, we check that if $h$ is as in \eqref{valuesh}, then $\widehat{h\frac{a}{p}}<\frac{a}{p}$. For this, we observe that the distance within $C$ of two denominators $q_{2i-2}$, $q_{2i}$ is just $d(q_{2i-2},q_{2i})=a_{2i}||q_{2i-1}\frac{a}{p}||$ (see the proof of Lemma \ref{lemadensity}), so recursively $$\frac{a}{p}=\widehat{q_{2i}\frac{a}{p}}+\sum_{j=1}^{i}a_{2j}\Big|\Big|q_{2j-1}\frac{a}{p}\Big|\Big|$$ for every $i$. Moreover, $n\geq 5$ implies that $s\geq2$. Now, for $h=2q_{2s-2}$, we have that \begin{equation*}
    \begin{split}
        \frac{a}{p}=\widehat{q_{2s-2}\frac{a}{p}}+\sum_{j=1}^{s-1}a_{2j}\Big|\Big|q_{2j-1}\frac{a}{p}\Big|\Big|&\geq\widehat{q_{2s-2}\frac{a}{p}}+\Big|\Big|q_{2s-3}\frac{a}{p}\Big|\Big|\\&>2\widehat{q_{2s-2}\frac{a}{p}}=\widehat{h\frac{a}{p}}.
    \end{split}
\end{equation*} For the second inequality, we have used that $||q_{2s-3}\frac{a}{p}||>\widehat{q_{2s-2}\frac{a}{p}}$, which follows from Proposition \ref{propcontfrac} (1), (3), while the last equality is a consequence of $h=2q_{2s-2}<p$. On the other hand, if $h=q_{2s-2,r}+q_{2s}$ is as in the second line of \eqref{valuesh}, then \begin{equation*}
    \begin{split}
        \frac{a}{p}=\widehat{q_{2s}\frac{a}{p}}+\sum_{j=1}^{s}a_{2j}\Big|\Big|q_{2j-1}\frac{a}{p}\Big|\Big|&\geq\widehat{q_{2s}\frac{a}{p}}+r\Big|\Big|q_{2s-1}\frac{a}{p}\Big|\Big|+\Big|\Big|q_{2s-3}\frac{a}{p}\Big|\Big|\\&>\widehat{q_{2s}\frac{a}{p}}+\widehat{q_{2s-2,r}\frac{a}{p}}\geq\widehat{h\frac{a}{p}}.
    \end{split}
\end{equation*} For the second inequality, we have used again that $||q_{2s-3}\frac{a}{p}||>\widehat{q_{2s-2}\frac{a}{p}}$, and that $\widehat{q_{2s-2,r}\frac{a}{p}}=\widehat{q_{2s-2}\frac{a}{p}}+r||q_{2s-1}\frac{a}{p}||$. Thus, we obtain that $\widehat{h\frac{a}{p}}<\frac{a}{p}$ as claimed.

For $k=0$, let us prove that $\overline{\mu}_{i,i}^{(0)}=0$. We know that $\nu_i=ia_0+\Big\lfloor(i+1)\frac{a}{p}\Big\rfloor$. Since $h\notin E$, Lemma \ref{lemanuiE} gives that $n_i=ia_0+\Big\lfloor i\frac{a}{p}\Big\rfloor$. Now, the condition $\widehat{h\frac{a}{p}}<\frac{a}{p}$ is the same as $\widehat{i\frac{a}{p}}>1-\frac{a}{p}$. Then, we see that $\Big\lfloor(i+1)\frac{a}{p}\Big\rfloor=\Big\lfloor i\frac{a}{p}\Big\rfloor+1$, proving that $\nu_i=n_i+1$, and hence $\overline{\mu}_{i,i}^{(0)}=0$, as we wanted.

Now, assume that $k+1>h$ and call $m=k+1-h$. From Proposition \ref{proentries} (3) we see that the condition $\widehat{h\frac{a}{p}}<\frac{a}{p}$ gives directly that $\overline{\mu}_{j,i}^{(k)}=0$ for every $m\leq j\leq m+d_m$. Since $i<p-1$ for $h=p-i$ as in \eqref{valuesh}, we have that $m<p-1$. Then Proposition \ref{proentries} (2) allows us to conclude that $\overline{\mu}_{j,i}^{(k)}=0$ for all $j$.
\end{proof}

We use the data obtained in these two results to prove that $\mathrm{det}(\overline{M(\alpha)})$ vanishes.

\begin{coro} Let us assume that $n\geq5$ and take $\theta=\pi_L^a$. Given $\alpha\in\mathfrak{A}_{\theta}$, $\mathrm{det}(M(\alpha))\equiv0\,(\mathrm{mod}\,\mathfrak{p}_K)$.
\end{coro}
\begin{proof}
By the two previous lemmas, the matrix $\overline{M(\alpha)}$ has $a_{2s}+2$ columns (the ones corresponding to \eqref{valuesh}) with at most $a_{2s}+1$ non-zero elements, which are the ones in Lemma \ref{lemanonfreeless}. These elements are in the rows of index $j=k+i$, where $k$ is as stated there. If $h$ is as in the first line of \eqref{valuesh}, $p-j-1=q_{2s-2}$. Otherwise, $$p-j-1=q_{2s-2}+(a_{2s}-(r'-r))q_{2s-1},\quad0\leq r\leq r'\leq a_{2s}.$$ In total, these correspond to $a_{2s}+1$ different values of $j$. Then the $a_{2s}+1$ possibly non-zero elements in each of these columns are allocated in the same rows. Now, since each summand of $\mathrm{det}(\overline{M(\alpha)})$ includes exactly one entry of each column as a factor, it is necessarily zero.
\end{proof}

This result immediately implies Proposition \ref{statementnecess}, as it gives that for $\theta=\pi_L^a$ no element $\alpha\in\mathfrak{A}_{\theta}$ is a generator of $\mathfrak{A}_{\theta}$ as $\mathfrak{A}_{L/K}$-ideal, and therefore $\mathfrak{A}_{\theta}$ is not $\mathfrak{A}_{L/K}$-principal.

\begin{example}\normalfont\label{examplenonfree} Let $K$ be a $13$-adic field with absolute ramification index $e=2$, and let $L/K$ be a degree $13$ extension of $13$-adic fields with dihedral normal closure $\widetilde{L}$ having ramification jump $t=3$. We have that $\frac{2\cdot13\cdot e}{12}-2=\frac{7}{3}<t<\frac{2\cdot13\cdot e}{12}=\frac{13}{3}$ and $\ell=a=8$, so we may apply Theorem \ref{maintheorem} (4). Since $\frac{8}{13}=[0;1,1,1,1,2]$, we see that $n=5$ and $\mathcal{O}_L$ is not $\mathfrak{A}_{L/K}$-free.
\end{example}

\subsubsection*{Proof of Theorem \ref{maintheorem} (4)}

We finish the proof of Theorem \ref{maintheorem} (4) as follows. By Proposition \ref{statementsuff} if $n\leq 4$ then $\mathfrak{A}_{\theta}$ is $\mathfrak{A}_{L/K}$-principal. Now we use the paragraph following Proposition \ref{prohnormalfreeness}, concretely the same proof of (4) implies in (1) in that result gives that $\mathcal{O}_L$ is $\mathfrak{A}_{L/K}$-free. Conversely, if $n\geq 5$, from Proposition \ref{statementnecess} we deduce that $\mathfrak{A}_{\theta}$ is not $\mathfrak{A}_{L/K}$-principal, and Proposition \ref{prohnormalfreeness} directly gives that $\mathcal{O}_L$ is not $\mathfrak{A}_H$-free.

\section{Consequences}\label{sectconseq}

We examine some interesting consequences and particular cases that can be derived from Theorem \ref{maintheorem}.

\begin{pro} Let $p$ be an odd prime and let $L/K$ be a degree $p$ extension of $p$-adic fields with dihedral normal closure $\widetilde{L}$. If $K/\mathbb{Q}_p$ is unramified (in particular, if $K=\mathbb{Q}_p$), then $\mathcal{O}_L$ is $\mathfrak{A}_{L/K}$-free.
\end{pro}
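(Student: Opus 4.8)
The hypothesis means that the absolute ramification index $e$ of $K$ equals $1$. The plan is to exploit this to reduce the ramification jump $t$ of $\widetilde{L}/K$ to a very short list of values, and then to read off the freeness from the criteria already established in Corollary~\ref{coropnonram} and Theorem~\ref{maintheorem}. First I would split according to whether $\widetilde{L}/K$ is totally ramified.

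Suppose first that $\widetilde{L}/K$ is not totally ramified. Then $M/K$ is unramified, so $M$ again has absolute ramification index $e=1$, and $\widetilde{L}/M$ is a totally ramified cyclic degree $p$ extension. The maximal-jump bound for such an extension gives $t\leq\frac{pe}{p-1}=\frac{p}{p-1}<2$, and since $t\geq1$ this forces $t=1$. Hence the remainder $a$ of $t$ modulo $p$ is $1$, which divides $p-1$, and Corollary~\ref{coropnonram}~(2) immediately yields that $\mathcal{O}_L$ is $\mathfrak{A}_{L/K}$-free.

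Now suppose $\widetilde{L}/K$ is totally ramified, so that Theorem~\ref{maintheorem} applies and $t$ is odd (Section~\ref{sectarithm}) with $1\leq t\leq\frac{2p}{p-1}$. As $\frac{2p}{p-1}<3$ for $p>3$ and equals $3$ for $p=3$, the only possibilities are $t=1$ for every $p$, together with $t=3$ when $p=3$. The key observation is that with $e=1$ one has $\frac{2pe}{p-1}-2=\frac{2}{p-1}\leq1\leq t$, so the hypothesis of Theorem~\ref{maintheorem}~(4) is always satisfied; thus whenever $a\neq0$ the freeness of $\mathcal{O}_L$ is equivalent to the continued fraction expansion of $\frac{\ell}{p}$ having length at most $4$. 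The case $p=3$, $t=3$ has $\ell=3$, hence $a=0$, and is settled directly by Theorem~\ref{maintheorem}~(1).

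In the remaining case $t=1$ we have $\ell=\frac{p+1}{2}$ with $0<\ell<p$, so $a=\frac{p+1}{2}$, and the only genuine computation is the continued fraction expansion of $\frac{a}{p}=\frac{(p+1)/2}{p}$. One checks that $\frac{p}{a}=2-\frac{2}{p+1}$ has first partial quotient $1$, and iterating the algorithm produces $\frac{a}{p}=[0;1,1,\frac{p-1}{2}]$ for $p>3$ (of length $3$) and $\frac{2}{3}=[0;1,2]$ for $p=3$ (of length $2$). In either situation the length is at most $4$, so Theorem~\ref{maintheorem}~(4) gives that $\mathcal{O}_L$ is $\mathfrak{A}_{L/K}$-free. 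The main obstacle is precisely this explicit continued fraction computation together with the careful bookkeeping of the ramification bounds; once these are in place, the statement follows from a direct appeal to the criteria already proved.
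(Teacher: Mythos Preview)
Your proof is correct and follows essentially the same case split and ramification analysis as the paper. The only cosmetic difference is that in the non-totally ramified case you invoke Corollary~\ref{coropnonram}~(2) (since $a=1$ divides $p-1$), whereas the paper invokes Corollary~\ref{coropnonram}~(3) (since $t=1\geq\frac{p}{p-1}-1$ and $\frac{1}{p}=[0;p]$ has length $1$); both are immediate, and your choice is arguably the more direct one.
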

\begin{proof}
The hypothesis means that $e=1$. If $\widetilde{L}/K$ is not totally ramified, this implies that $1\leq t\leq\frac{p}{p-1}$, giving that $t=1$. Moreover $1>\frac{p}{p-1}-1$, so the $\mathfrak{A}_{L/K}$-freeness of $\mathcal{O}_L$ is obtained by Corollary \ref{coropnonram} (3).

Now, assume that $\widetilde{L}/K$ is totally ramified. Then, the ramification jump $t$ of $\widetilde{L}/K$ is upper bounded by $3$ if $p=3$ and by $2$ otherwise. Moreover, $t$ is the ramification jump of the totally ramified extension $\widetilde{L}/M$, where $M$ is the unique quadratic intermediate field of $\widetilde{L}/K$, so as discussed in Section \ref{sectarithm}, $t$ must be odd. Hence, we have that $t=1$ unless $p=3$ and $t=3$.

Assume that $t=1$. Then we have $\ell=a=\frac{p+1}{2}$. If $p=3$, we have that $a=2$ divides $p-1=2$, and we are done by Theorem \ref{maintheorem} (3). For $p>3$, since $\frac{2p}{p-1}-2<1$, we may apply Theorem \ref{maintheorem} (4). Now, one calculates $$\frac{\ell}{p}=\Big[0;1,1,\frac{p-1}{2}\Big],$$ which has length $n=3$, and then $\mathcal{O}_L$ is $\mathfrak{A}_{L/K}$-free.

Finally, suppose that $p=3$ and $t=3$. Then we have that $\widetilde{L}/K$ is maximally ramified, and we apply Theorem \ref{maintheorem} (1).
\end{proof}

Recall that by Proposition \ref{progivenram} we can assure the existence of a dihedral degree $2p$ extension of $p$-adic fields (and consequently of a degree $p$ extension with dihedral normal closure) with given ramification. We will use this result and Theorem \ref{maintheorem} to construct examples of extensions with behaviour which was not already known.

The first of the examples is related with weakly ramified extensions, i.e. those Galois extensions of $p$-adic fields for which the second ramification group $G_2$ is trivial. In \cite[Theorem 1.2]{johnston}, Johnston proved that the ring of integers of such an extension is always free over the associated order in the classical Galois structure. The following result will give rise to a weakly ramified extension whose ring of integers is not free over the associated order of some non-classical Hopf-Galois structure.

\begin{pro}\label{proweakly} There exists a degree $p$ extension of $p$-adic fields $L/K$ with dihedral normal closure $\widetilde{L}$ such that $\widetilde{L}/K$ is weakly ramified and $e>1$. For such an extension, $\mathcal{O}_L$ is $\mathfrak{A}_{L/K}$-free if and only if $p=3$. 
\end{pro}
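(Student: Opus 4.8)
The plan is to reduce the entire question to the single ramification datum $t=1$ and then read off the answer from Theorem \ref{maintheorem}. First I would observe that weak ramification is equivalent to $t=1$ in our setting: since $G_1\cong C_p$ is always nontrivial, the ramification jump $t$ is the largest index with $G_t\neq 1$, so the condition $G_2=1$ together with $t\geq 1$ forces $t=1$. Thus the hypothesis "weakly ramified" simply pins down $t=1$, and the whole proof becomes a bookkeeping exercise with this value.

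Next I would reduce to the totally ramified case, which carries all the content. If $\widetilde L/K$ is not totally ramified, then the remainder of $t=1$ mod $p$ is $a=1$, which divides $p-1$, so Corollary \ref{coropnonram} already yields $\mathfrak A_{L/K}$-freeness; the genuine dependence on $p$ therefore lives in the totally ramified case, which I treat now. With $t=1$ we get $\ell=\frac{p+1}{2}$, and since $0<\frac{p+1}{2}<p$ for $p\geq 3$, the remainder of $\ell$ mod $p$ is exactly $a=\frac{p+1}{2}$. In particular $a\neq 0$, so we are never in the maximally ramified case of Theorem \ref{maintheorem} (1).

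The main step is to locate the correct regime of Theorem \ref{maintheorem}, and this is where I expect the only real subtlety to lie. Using $e\geq 2$ I would verify the strict inequality $t=1<\frac{2pe}{p-1}-2$: indeed $\frac{2pe}{p-1}-2\geq\frac{4p}{p-1}-2=\frac{2(p+1)}{p-1}>1$ for every odd prime $p$. This computation does double duty: it rules out the continued-fraction regime of Theorem \ref{maintheorem} (4), and it places us squarely in the range of Theorem \ref{maintheorem} (3) where the converse is valid. Consequently $\mathcal O_L$ is $\mathfrak A_H$-free if and only if $a\mid p-1$.

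Finally I would evaluate this divisibility. Writing $a=\frac{p+1}{2}$ we have $p-1=2a-2$, so $a\mid p-1$ is equivalent to $a\mid 2$; since $a=\frac{p+1}{2}\geq 2$, this holds precisely when $a=2$, that is, when $p=3$. This gives the stated equivalence. The hard part is thus entirely the regime identification in the third paragraph, since verifying $t<\frac{2pe}{p-1}-2$ is what simultaneously activates the converse in case (3) and excludes case (4); once that inequality is in hand, the remaining steps are one-line computations. Existence of such extensions for admissible $e>1$ is guaranteed by Proposition \ref{progivenram}, so the equivalence is not vacuous.
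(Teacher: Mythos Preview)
Your core argument is correct and is exactly the paper's approach: identify $t=1$ from weak ramification, compute $a=\frac{p+1}{2}$, verify $t<\frac{2pe}{p-1}-2$ using $e\geq 2$, and then invoke Theorem \ref{maintheorem} (3) together with its converse to reduce freeness to the divisibility $a\mid p-1$. Your treatment of the divisibility (rewriting $p-1=2a-2$ so that $a\mid p-1\Longleftrightarrow a\mid 2\Longleftrightarrow a=2\Longleftrightarrow p=3$) is in fact tidier and more explicit about both directions than the paper's proof, which only spells out the $p>3$ case.

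There is, however, a genuine logical slip in your second paragraph. Observing that in the non-totally ramified case Corollary \ref{coropnonram} (with $a=1$) always gives freeness is \emph{not} a reduction to the totally ramified case: for any $p>3$ it would directly falsify the ``only if'' direction of the proposition. The paper never touches this case; its proof invokes only Theorem \ref{maintheorem}, whose standing hypothesis is that $\widetilde{L}/K$ is totally ramified, and the existence step via Proposition \ref{progivenram} is implicitly producing such an extension. You should therefore drop that paragraph and read the proposition (as the paper does) under the assumption that $\widetilde{L}/K$ is totally ramified; alternatively, you would need to argue that the non-totally ramified situation cannot arise under the stated hypotheses, which you have not done.
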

\begin{proof}
We first prove the existence using Proposition \ref{progivenram}. Let $K$ be an absolutely ramified $p$-adic field, i.e. with ramification index $e>1$ over $\mathbb{Q}_p$. Since $t=1$ is odd and coprime with $p$, by Proposition \ref{progivenram} there is a degree $p$ extension $L/K$ with dihedral normal closure $\widetilde{L}$ whose ramification jump is $t$.

Let us prove the equivalence. Let $L/K$ be an extension as in the statement, so that $t=1$ and $e>1$. Then $$\frac{2pe}{p-1}-2\geq\frac{4p}{p-1}-2>1,$$ and $a=\frac{p+1}{2}$, which divides $p-1$ if and only if $p=3$. Thus, the statement follows from Theorem \ref{maintheorem} (3).
\end{proof}

If $L$ is as in Proposition \ref{proweakly}, the Galois group of $\widetilde{L}/K$ is a semidirect product, and hence we can consider induced Hopf-Galois structures on $\widetilde{L}/K$ (see \cite{cresporiovela} for the definition and the main properties of induced Hopf-Galois structures).

\begin{example} If $p>3$, there is some dihedral degree $2p$ extension $E/K$ of $p$-adic fields which is weakly ramified and such that $\mathcal{O}_E$ is not $\mathfrak{A}_{H'}$-free for some induced Hopf-Galois structure $H'$ on $E/K$.
\end{example}
\begin{proof}
Let $L/K$ be as in Proposition \ref{proweakly}, and let $E=\widetilde{L}$ be the normal closure of $L/K$, so that $E/K$ is weakly ramified. Since $G\coloneqq\mathrm{Gal}(E/K)\cong D_p$, $\mathrm{Gal}(E/L)$ has a normal complement inside $G$. Then, the unique Hopf-Galois structures $H$ and $\overline{H}$ on $L/K$ and $E/L$ respectively induce a Hopf-Galois structure $H'$ on $E/K$. Now, we can apply \cite[Proposition 5.1]{truman2018}. Since $\mathcal{O}_L$ is not $\mathfrak{A}_{L/K}$-free by Proposition \ref{proweakly}, we conclude that $\mathcal{O}_E$ is not $\mathfrak{A}_{H'}$-free.
\end{proof}

Another consequence of Proposition \ref{proweakly} is that there are infinitely many degree $p$ extensions $L/K$ with dihedral normal closure such that $\mathcal{O}_L$ is not $\mathfrak{A}_{L/K}$-free, where $p$ runs through the odd prime numbers greater than $3$.

The second example is related with the study of the freeness after tensoring by $\mathcal{O}_M$. In Proposition \ref{proaftertensoring}, we proved that under the restriction that $L/K$ and $M/K$ are arithmetically disjoint, the $\mathfrak{A}_{L/K}$-freeness of $\mathcal{O}_L$ is equivalent to the $\mathfrak{A}_{\widetilde{L}/M}$-freeness of $\mathcal{O}_{\widetilde{L}}$. We show that this statement does not hold if we remove the arithmetic disjointness.

\begin{example} There is some degree $p$ extension $L/K$ of $p$-adic fields with dihedral normal closure $\widetilde{L}$ such that $\mathcal{O}_{\widetilde{L}}$ is $\mathfrak{A}_{\widetilde{L}/M}$-free but $\mathcal{O}_L$ is not $\mathfrak{A}_{L/K}$-free.
\end{example}
\begin{proof}
Choose the extension $L/K$ of $13$-adic fields in Example \ref{examplenonfree}. We already know that $\mathcal{O}_L$ is not $\mathfrak{A}_{L/K}$-free (in the only Hopf-Galois structure on $L/K$). Let $M/K$ be the only quadratic subextension of $\widetilde{L}/K$. Since $\widetilde{L}/K$ is also of degree $13$, it admits a unique Hopf-Galois structure as well. Now, $t=3$ is the ramification jump of $\widetilde{L}/M$ and its remainder mod $13$ is $a=3$, which divides $p-1=12$, so from Theorem \ref{maintheorem} (3) it follows that $\mathcal{O}_{\widetilde{L}}$ is $\mathfrak{A}_{\widetilde{L}/M}$-free.
\end{proof}

We finish by giving a bunch of extensions $L/K$ such that $\mathcal{O}_L$ is not $\mathfrak{A}_{L/K}$-free. If we write the continued fraction expansion in Example \ref{examplenonfree} and sum the successive denominators, we see that each one is the sum of the previous two, starting with $1$ and $2$, and consequently the fraction is a quotient of two consecutive terms of the Fibonacci sequence. This inspires the following general construction.

\begin{example}\normalfont Let $\{x_k\}_{k=1}^{\infty}$ be a sequence of positive integers such that $x_k=x_{k-1}+x_{k-2}$ for every $k\geq3$ and $\gcd(x_1,x_2)=1$. Fix $k\geq6$ such that $p=x_k$ is an odd prime number. For instance:
\begin{itemize}
    \item If $x_1=1$ and $x_2=2$, $\{x_k\}_{k=1}^{\infty}$ is the Fibonacci sequence and $p$ may be chosen as any Fibonacci prime greater than $5$, such as $x_6=13$ or $x_{10}=89$.
    \item If $x_1=2$ and $x_2=1$, $\{x_k\}_{k=1}^{\infty}$ is the sequence of Lucas numbers and $p$ may be chosen as any Lucas prime greater than $11$, such as $x_7=29$ or $x_8=47$.
\end{itemize} Using Proposition \ref{progivenram}, choose a degree $p$ extension $L/K$ of $p$-adic fields with dihedral normal closure such that $t=x_{k-3}$ and $e\in\mathbb{Z}_{>1}$. Then $$\ell=\frac{p+t}{2}=\frac{x_k+x_{k-3}}{2}=x_{k-1},$$ and consequently also $a=x_{k-1}$. We claim that $\mathcal{O}_L$ is not $\mathfrak{A}_{L/K}$-free. Assume that $t<\frac{2pe}{p-1}-2$. Since $x_k=x_{k-1}+x_{k-2}\geq2x_{k-2}+1$, we see that $x_{k-2}\leq\frac{x_k-1}{2}$, so $x_{k-1}\geq\frac{x_k+1}{2}$. Since $k\geq 5$, $p>3$ and then $a$ does not divide $p-1$, so $\mathcal{O}_L$ is not $\mathfrak{A}_{L/K}$-free. On the other hand, if $t\geq\frac{2pe}{p-1}-2$, then $\frac{x_{k-1}}{x_k}=\frac{1}{1+\frac{x_{k-2}}{x_{k-1}}}$, and recursively we see that $$\frac{x_{k-1}}{x_k}=\cfrac{1}{1+\cfrac{1}{\ddots+\cfrac{1}{1+\cfrac{x_1}{x_2}}}}.$$ Since $k\geq 6$, the length of the continued fraction expansion of $\frac{\ell}{p}$ is $n\geq5$, so from Theorem \ref{maintheorem} (4) we conclude that $\mathcal{O}_L$ is not $\mathfrak{A}_{L/K}$-free.
\end{example}

\section*{Acknowledgements} 

I want to thank Nigel Byott and Griffith Elder for enriching discussions and helpful suggestions on the topic of study on this paper, and Paul Truman for pointing out the idea to derive the counterexample from Proposition \ref{proweakly}. I want to thank the referee as well, for their insightful comments and their suggestions to improve the quality and the readability of this work. I am also grateful with the library of Charles University, for requesting a copy of \cite{fertonthesis} in my name, and with the library of the University of Grenoble, for letting me borrow the original version. This work was supported by Czech Science Foundation, grant 21-00420M, and by Charles University Research Centre program UNCE/SCI/022.

\printbibliography[heading=bibintoc]

\Addresses

\end{document}